\newcommand{\N}{\mathbb{N}}
\newcommand{\R}{\mathbb{R}}
\newcommand{\B}{\mathcal{B}}
\newcommand{\K}{\mathcal{K}}
\newcommand{\D}{\mathcal{D}}
\newcommand{\ga}{\gamma}
\newcommand{\Ga}{\Gamma}
\newcommand{\la}{\lambda}
\newcommand{\La}{\Lambda}
\newcommand{\eps}{\varepsilon}
\newcommand{\1}{1\!\!1}
\newcommand\ren{{\eps, \, \mathrm{ren}}}
\newcommand{\X}{{\R^d}}
\DeclareMathOperator*{\esssup}{ess\,sup}
\newcommand{\inv}{\mathrm{inv}}
\newcommand{\Dom}{\mathrm{Dom}}
\newcommand{\Sect}{\mathrm{Sect}}
\renewcommand{\Re}{\mathrm{Re\,}}
\renewcommand{\Im}{\mathrm{Im\,}}
\numberwithin{equation}{section}
\newtheorem{theorem}{Theorem}[section]
\newtheorem{lemma}[theorem]{Lemma}
\newtheorem{proposition}[theorem]{Proposition}
\theoremstyle{remark}
\newtheorem{remark}{Remark}[section]
\theoremstyle{definition}
\newtheorem{assumption}{Assumption}[section]
\newtheorem{example}{Example}
\renewcommand{\@fnsymbol}[1]{\ensuremath{%
   \ifcase#1\or * \or 1\or 2\or 3\or
   \mathsection\or \mathparagraph\or \|\or \star\or
   \star\star\or {\star\star}\star \else\@ctrerr\fi}}
\title{Semigroup approach to birth-and-death stochastic dynamics in~continuum}
\author{Dmitri Finkelshtein\thanks{Institute of Mathematics,
National Academy of Sciences of Ukraine, Kyiv, Ukraine ({\tt
fdl@imath.kiev.ua}).} \and Yuri Kondratiev\thanks{Fakult\"{a}t
f\"{u}r Mathematik, Universit\"{a}t Bielefeld, 33615 Bielefeld,
Germany ({\tt kondrat@math.uni-bielefeld.de})} \and Oleksandr
Kutoviy\thanks{Fakult\"{a}t f\"{u}r Mathematik, Universit\"{a}t
Bielefeld, 33615 Bielefeld, Germany ({\tt
kutoviy@math.uni-bielefeld.de}).}}
\begin{document}

\maketitle

\begin{abstract}
We describe a general approach to the construction of a state
evolution corresponding to the Markov generator of a spatial
birth-and-death dynamics in $\X$. We present conditions on the
birth-and-death intensities which are sufficient for the existence
of an evolution as a strongly continuous semigroup in a proper
Banach space of correlation functions satisfying the Ruelle bound.
The convergence of a Vlasov-type scaling for the corresponding
stochastic dynamics is considered.
\end{abstract}

{\small {\bf Key words.} $C_0$-semigroups, continuous systems ,
Markov evolution, spatial birth-and-death dynamics, correlation
functions, evolution equations, Vlasov scaling, Vlasov equation,
scaling limits}

{\small {\bf AMS subject classification.}{46E30, 47D06, 82C21,
35Q83}}

\section{Introduction}
Spatial Markov processes in $\X$ may be described as stochastic
evolutions of locally finite subsets (configurations)
$\ga\subset\X$, i.e., any $\ga$ has a finite number of points in an
arbitrary ball in $\X$. One of the most important classes of such
stochastic dynamics is given by the birth-and-death Markov processes
in the space $\Gamma$ of all configurations from $\X$. These are
processes in which an infinite number of individuals exist at each
instant, and the rates at which new individuals appear and some old ones
disappear depend on the instantaneous configuration of existing
individuals \cite{HS1978}. The corresponding Markov generators have
a~natural heuristic representation in terms of birth and death
intensities. The birth intensity $b(x,\ga)\geq0$ characterizes the
appearance of a~new point at $x\in\X$ in~the presence of a~given
configuration $\ga\in\Ga$. The death intensity $d(x,\ga)\geq0$
characterizes the probability of the event that the point $x$ of the configuration
$\ga$ disappears, depending on the location of the remaining
points of the confuguration, $\ga\setminus x$. Here and below, for
simplicity of notation, we write $x$ instead of $\{x\}$.
Heuristically, the corresponding Markov generator is described by
the following expression
\begin{align}
(LF)(\ga ):=&\sum_{x\in \ga }d(x,\ga \setminus x)\left[F(\ga
\setminus x)-F(\ga )\right]\notag\\&+\int_{\R^{d}}b(x,\ga
)\left[F(\ga \cup x)-F(\ga )\right] dx, \label{BaDGen-intro}
\end{align}
for proper functions $F:\Ga\rightarrow\R$.

The study of spatial birth-and-death processes was initiated by
C. Preston \cite{Pre1975}. This paper dealt with a solution of the
backward Kolmogorov equation
\begin{equation}\label{BKE}
  \frac{\partial}{\partial t} F_t = L F_t
\end{equation}
under the restriction that only a~finite number of individuals are
alive at each moment of time. Under certain conditions,
corresponding processes exist and are temporally ergodic, that is,
there exists a~unique stationary distribution. Note that a more general
setting for birth-and-death processes only requires that the number of points in any compact
set remains finite at all times. A further progress in the study of these
processes was achieved by R. Holley and D. Stroock in \cite{HS1978}. They
described in detail an analytic framework for birth-and-death
dynamics. In particular, they analyzed the case of a birth-and-death process in
a bounded region.

Stochastic equations for spatial birth-and-death processes were
formulated in \cite{Gar1995}, through a~spatial version of the
time-change approach. Further, in \cite{GK2006}, these processes were
represented as solutions to a~system of stochastic equations, and
conditions for the existence and uniqueness of solutions to these
equations, as well as for the corresponding martingale problems, were
given. Unfortunately, quite restrictive assumptions on the birth and
death rates in \cite{GK2006} do not allow an application of
these results to several particular models that are interesting for
applications (see e.g. Examples~1--3 below).

A growing interest to the study of spatial birth-and-death processes, which we have recently observed, is stimulated
by (among others) an
important role which these processes play in several applications. For example,
in spatial plant ecology, a~general approach to the so-called
individual based models was developed in a~series of works, see e.g.\
\cite{BP1997,BP1999,DL2000,MDL2004} and the references therein. These
models are described as birth-and-death Markov processes in the
configuration space $\Gamma$ with specific rates $b$ and $d$
which reflect biological notions such as competition, establishment,
fecundity etc. Other examples of birth-and-death processes may be
found in mathematical physics. In particular, the Glauber-type
stochastic dynamics in $\Gamma$ is properly associated with the grand
canonical Gibbs measures for classical gases. This gives a~possibility to
study these Gibbs measures as equilibrium states for specific
birth-and-death Markov evolutions \cite{BCC2002}. Starting with a~Dirichlet form for a~given Gibbs measure, one
can consider an equilibrium stochastic dynamics \cite{KL2005}.
However, these dynamics give the time evolution of initial distributions from a quite narrow class. Namely, the class of admissible initial
distributions is essentially reduced to the states which are absolutely
continuous with respect to the invariant measure. In the present paper we
construct non-equilibrium stochastic dynamics which may have a much
wider class of initial states.

Concerning the study of particular birth-and-death models, let us stress that, on the one hand, for most cases appearing in
applications, the existence problem for a corresponding Markov
process is still open. On the other hand, the evolution of a state in the course of a stochastic dynamics is an important question in its own right.
A~mathematical formulation of this question may be realized through
the forward Kolmogorov equation for probability measures (states) on
the configuration space $\Gamma$:
\begin{equation}\label{FPE}
  \frac{\partial}{\partial t} \mu_t = L^* \mu_t.
\end{equation}
Here $L^*$ is the (informally) adjoint operator of $L$ with respect to the
pairing
\begin{equation}\label{pairing}
  \langle F,\mu \rangle:=\int_\Ga F(\ga)\,d\mu(\ga).
\end{equation}
In the physical literature, \eqref{FPE} is known as the Fokker--Planck
equation. However, the mere existence of the corresponding Markov
process will not give us much information about properties
of the solution to \eqref{FPE}.

An important technical observation concerns the~possibility to
reformulate the equations for states in terms of time evolutions for
corresponding correlation functions, see e.g. \cite{FKO2009} and
references therein. Namely, a~probability measure $\mu$ on $\Ga$ may
be characterized by a~sequence
$\bigl\{k^{(n)}(x_1,\ldots,x_n)\bigr\}_{n=0}^\infty$ of symmetric
non-negative functions on $(\X)^n$. Then, \eqref{FPE} may be
rewritten in the form
\begin{equation}\label{QE}
  \frac{\partial}{\partial t} k_t = \hat{L}^* k_t,
\end{equation}
where $\hat{L}^*$ is the~corresponding image of the operator $L^*$
from \eqref{FPE} acting on sequences of functions
$k_t=\{k^{(n)}_t\}_{n=0}^\infty$.

In various applications, correlation functions satisfy the so-called
Ruelle bound
\begin{equation}\label{RB-intro}
  \lvert k^{(n)}(x_1,\ldots,x_n)\rvert \leq C^n,\quad x_1,\ldots,x_n\in\X, \ n\in\N
\end{equation}
for some $C>0$. For example, for the correlation functions of the
Gibbs measure mentioned above, such inequalities hold true, see e.g.
\cite{Rue1970}. Hence, it is rather natural to study the solutions to
the equation \eqref{QE} in weighted $L^\infty$-type space of
functions with the Ruelle bound. However, analysis of the existence
problem in such a class of correlation functions meats essential
difficulties related to the use of non-separable $L^\infty$ spaces
and properties of strongly continuous semigroups acting in these
spaces. One of technical possibilities to study such
semigroups is based on the use of the pre-dual evolution equations in some
$L^1$ spaces.

Namely, we will exploit the duality
\begin{equation}\label{duality-intro}
  \langle\!\langle G,k \rangle\!\rangle:=
  \sum_{n=0}^\infty \frac{1}{n!} \int_{(\X)^n}
  G^{(n)}(x_1,\ldots,x_n)
  k^{(n)}(x_1,\ldots,x_n)\,dx_1\ldots dx_n,
\end{equation}
which is a pairing between a sequence $k=\{k^{(n)}\}_{n=0}^\infty$ of
functions which satisfy \eqref{RB-intro} and a sequence
$G=\{G^{(n)}\}_{n=0}^\infty$ of the so-called quasi-observables. The latter
are integrable functions satisfying
\begin{equation}\label{LCnorm-intro}
  \sum_{n=0}^\infty \frac{C^n}{n!} \int_{(\X)^n}\bigl|
G^{(n)}(x_1,\ldots,x_n)\bigr|\,dx_1\ldots dx_n<\infty.
\end{equation}
Then, the equation \eqref{QE} may be rewritten as follows
\begin{equation}\label{QKE}
  \frac{\partial}{\partial t} G_t = \hat{L} G_t,
\end{equation}
with the corresponding operator $\hat{L}$ acting on sequences
$G_t=\{G^{(n)}_t\}_{n=0}^\infty$. This is an analog of the backward
Kolmogorov equation \eqref{BKE} on sequences of functions. Note that
$\hat{L}^*$ is the dual operator of $\hat{L}$ with respect to
the duality \eqref{duality-intro}. The resulting, so-called hierarchical
equation \eqref{QKE} may be analyzed in a Fock-type space of
sequences of functions which satisfy \eqref{LCnorm-intro}. The
corresponding semigroup may be used for a~construction of time
evolution \eqref{QE} for correlation functions using the duality
\eqref{duality-intro}.

This approach was successfully applied to the construction and
analysis of state evolutions for different versions of the
Glauber dynamics \cite{KKZ2006,FKKZ2010,FKK2010} and for some
spatial ecology models \cite{FKK2009}. Each of the considered models
required its own specific version of the construction of a semigroup, which
takes into account particular properties of corresponding birth and
death rates.

In the present paper, we develop a~general approach to the
construction of the state evolution corresponding to the
birth-and-death Markov generators. We present conditions on the birth
and death intensities which are sufficient for the existence of
corresponding evolutions as strongly continuous semigroups in proper
Banach spaces of correlation functions satisfying the Ruelle bound
\eqref{RB-intro}.

Moreover, we apply this construction to study of the convergence of
the considered stochastic dynamics in a Vlasov-type scaling.
Originally, the notion of the Vlasov scaling was related to the
Hamiltonian dynamics of interacting particle systems. This is a mean
field scaling limit when the influence of weak long-range forces is
taken into account. Rigorously, this limit was studied by W.~Braun
and K.~Hepp in \cite{BH1977} for the Hamiltonian dynamics, and by
R.L.~Dobrushin \cite{Dob1979} for more general deterministic
dynamical systems. In \cite{FKK2010a}, we proposed a general scheme
for a Vlasov-type scaling of stochastic Markovian dynamics. Our
approach is based on a proper scaling of the evolutions of
correlation functions proposed by H. Spohn in \cite{Spo1980} for the
Hamiltonian dynamics. In the present paper, we apply such an
approach to the birth-and-death stochastic dynamics. This gives us
a~rigorous framework for the study of convergence of the scaled
hierarchical equations to a solution of the limiting Vlasov
hierarchy, and for the derivation of a~resulting non-linear
evolutional equation for the density of the limiting system. We
consider some special birth-and-death models to show how the general
conditions proposed in the paper may be verified in applications.

The structure of the paper is as follows. In
Section~\ref{sect-Prelim} we give a~brief introduction to notions related to the
configuration space. Subsection~\ref{subsect-evol-qo} is
devoted to the evolution of quasi-observables in the Fock-type space
which is the pre-dual of the space of correlation functions. We
propose constructive conditions on the birth and death rates under
which the corresponding dynamics exist. These conditions are
verified for a number of particular examples. The evolution of
correlation functions is considered in
Subsection~\ref{subsect-evol-cf}. The question concerning the existence
and uniqueness of the solution to the corresponding stationary
equation in the space of correlation functions is studied in
Subsection~\ref{subsect-evol-se}. In~Section~\ref{sect-VS} we
discuss the Vlasov-type scaling for birth-and-death stochastic
dynamics.

\section{Basic facts and notation}\label{sect-Prelim}

Let ${\B}({\X})$ be the family of all Borel sets in ${\X}$, $d\geq
1$; ${\B}_{\mathrm{b}} ({\X})$ denotes the system of all bounded
sets from ${\B}({\X})$.

The configuration space over space $\X$ consists of all locally
finite subsets (configurations) of $\X$. Namely,
\begin{equation} \label{confspace}
\Ga =\Ga\bigl(\X\bigr) :=\Bigl\{ \ga \subset \X \Bigm| |\ga _\La
|<\infty, \ \mathrm{for \ all } \ \La \in {\B}_{\mathrm{b}}
(\X)\Bigr\}.
\end{equation}
Here $|\cdot|$ means the cardinality of a~set, and
$\ga_\La:=\ga\cap\La$. The space $\Ga$ is equipped with the vague
topology, i.e., the weakest topology for which all mappings
$\Ga\ni\ga\mapsto \sum_{x\in\ga} f(x)\in{\R}$ are continuous for any
continuous function $f$ on $\X$ with compact support. The
corresponding Borel $\sigma $-algebra $\B(\Ga )$ is the smallest
$\sigma $-algebra for which all mappings $\Ga \ni \ga \mapsto |\ga_
\La |\in{ \N}_0:={\N}\cup\{0\}$ are measurable for any $\La\in{
\B}_{\mathrm{b}}(\X)$, see e.g. \cite{AKR1998a}. It is worth noting
that $\Ga$ is a~Polish space (see e.g. \cite{KK2006} and references
therein).

The space of $n$-point configurations in $Y\in\B(\X)$ is defined by
\begin{equation*}
\Ga^{(n)}(Y):=\Bigl\{ \eta \subset Y \Bigm| |\eta |=n\Bigr\} ,\quad
n\in { \N}.
\end{equation*}
We set $\Ga^{(0)}(Y):=\{\emptyset\}$. As a~set, $\Ga^{(n)}(Y)$ may
be identified with the symmetrization of $\widetilde{Y^n} = \bigl\{
(x_1,\ldots ,x_n)\in Y^n \bigm| x_k\neq x_l \ \mathrm{if} \ k\neq
l\bigr\}$. Hence one can introduce the corresponding Borel $\sigma
$-algebra, which we denote by $\B\bigl(\Ga^{(n)}(Y)\bigr)$. The
space of finite configurations in $Y\in\B(\X)$ is defined as
\begin{equation*}
\Ga_0(Y):=\bigsqcup_{n\in {\N}_0}\Ga^{(n)}(Y).
\end{equation*}
This space is equipped with the topology of the disjoint union. Let
$\B \bigl(\Ga_0(Y)\bigr)$ denote the corresponding Borel $\sigma
$-algebra. In the case of $Y=\X$ we will omit the index $Y$ in the
previously defined notations. Namely, $\Ga_0:=\Ga_{0}(\X)$,
$\Ga^{(n)}:=\Ga^{(n)}(\X)$.

The restriction of the Lebesgue product measure $(dx)^n$ to
$\bigl(\Ga^{(n)}, \B(\Ga^{(n)})\bigr)$ we denote by $m^{(n)}$. We
set $m^{(0)}:=\delta_{\{\emptyset\}}$. The Lebesgue--Poisson measure
$\la $ on $\Ga_0$ is defined by
\begin{equation} \label{LP-meas-def}
\la :=\sum_{n=0}^\infty \frac {1}{n!}m^{(n)}.
\end{equation}
For any $\La\in\B_{\mathrm{b}}(\X)$ the restriction of $\la$ to $\Ga
(\La):=\Ga_{0}(\La)$ will be also denoted by $\la $. The space
$\bigl( \Ga, \B(\Ga)\bigr)$ is the projective limit of the family of
spaces $\bigl\{\bigl( \Ga(\La), \B(\Ga(\La))\bigr)\bigr\}_{\La \in
\B_{\mathrm{b}} (\X)}$. The Poisson measure $\pi$ on $\bigl(\Ga
,\B(\Ga )\bigr)$ is given as the projective limit of the family of
measures $\{\pi^\La \}_{\La \in \B_{\mathrm{b}} (\X)}$, where $
\pi^\La:=e^{-m(\La)}\la $ is the probability measure on $\bigl(
\Ga(\La), \B(\Ga(\La))\bigr)$ and $m(\La)$ is the Lebesgue measure
of $\La\in \B_{\mathrm{b}} (\X)$ (see e.g. \cite{AKR1998a} for
details).

A set $M\in \B (\Ga_0)$ is called bounded if there exists $ \La \in
\B_{\mathrm{b}} (\X)$ and $N\in { \N}$ such that $M\subset
\bigsqcup_{n=0}^N\Ga^{(n)}(\La)$. The set of bounded measurable
functions with bounded support we denote by $
B_{\mathrm{bs}}(\Ga_0)$, i.e., $G\in B_{\mathrm{bs}}(\Ga_0)$ if $
G\upharpoonright_{\Ga_0\setminus M}=0$ for some bounded $M\in {\B
}(\Ga_0)$. Any $\B(\Ga_0)$-measurable function $G$ on $ \Ga_0$, in
fact, is defined by a~sequence of functions
$\bigl\{G^{(n)}\bigr\}_{n\in{ \N}_0}$ where $G^{(n)}$ is a
$\B(\Ga^{(n)})$-measurable function on $\Ga^{(n)}$. The set of
\textit{cylinder functions} on $\Ga$ we denote by ${{\mathcal{
F}}_{\mathrm{cyl}}}(\Ga )$. Each $F\in
{{\mathcal{F}}_{\mathrm{cyl}}}(\Ga )$ is characterized by the
following relation: $F(\ga )=F(\ga_\La )$ for some $\La\in
\B_{\mathrm{b}}(\X)$. Functions on $\Ga$ will be called {\em
observables} whereas functions on $\Ga_0$ well be called {\em
quasi-observables}.

There exists mapping from $B_{\mathrm{bs}} (\Ga_0)$ into ${{
\mathcal{F}}_{\mathrm{cyl}}}(\Ga )$, which plays the key role in our
further considerations:
\begin{equation}
(KG)(\ga ):=\sum_{\eta \Subset \ga }G(\eta ), \quad \ga \in \Ga,
\label{KT3.15}
\end{equation}
where $G\in B_{\mathrm{bs}}(\Ga_0)$, see e.g.
\cite{KK2002,Len1975,Len1975a}. The summation in \eqref{KT3.15} is
taken over all finite subconfigurations $\eta\in\Ga_0$ of the
(infinite) configuration $\ga\in\Ga$; we denote this by the symbol,
$\eta\Subset\ga $. The mapping $K$ is linear, positivity preserving,
and invertible, with
\begin{equation}
(K^{-1}F)(\eta ):=\sum_{\xi \subset \eta }(-1)^{|\eta \setminus \xi
|}F(\xi ),\quad \eta \in \Ga_0. \label{k-1trans}
\end{equation}
Set $(K_0 G)(\eta):=(KG)(\eta)$, $\eta\in\Ga_0$.

The so-called coherent state corresponding to a~$\B(\X)$-measurable
function $f$ is defined by
\[
e_\la (f,\eta ):=\prod_{x\in \eta }f(x) ,\ \eta \in \Ga
_0\!\setminus\!\{\emptyset\},\quad e_\la (f,\emptyset ):=1.
\]
Then
\begin{equation}\label{Kexp}
(K_0e_\la (f))(\eta)=e_\la(f+1,\eta), \quad \eta\in\Ga_0
\end{equation}
and for any $f\in L^1(\X,dx)$
\begin{equation}\label{intexp}
\int_{\Ga_0}e_\la (f,\eta)d\la(\eta)=\exp\Bigl\{\int_\X
f(x)dx\Bigr\}.
\end{equation}

A measure $\mu \in {\mathcal{M}}_{\mathrm{fm} }^1(\Ga )$ is called
locally absolutely continuous with respect to the Poisson measure
$\pi$ if for any $\La \in \B_{\mathrm{b}} (\X)$ the projection of
$\mu$ onto $\Ga(\La)$ is absolutely continuous with respect to the
projection of $ \pi$ onto $\Ga(\La)$. In this case, according to
\cite{KK2002}, there exists a~\emph{correlation functional}
$k_{\mu}:\Ga_0 \rightarrow {\R}_+$ such that for any $G\in
B_{\mathrm{bs}} (\Ga_0)$ the following equality holds
\begin{equation} \label{eqmeans}
\int_\Ga (KG)(\ga) d\mu(\ga)=\int_{\Ga_0}G(\eta)
k_\mu(\eta)d\la(\eta).
\end{equation}
The functions $ k_{\mu}^{(n)}:(\R^{d})^{n}\longrightarrow\R_{+} $
given by
\[
k_{\mu}^{(n)}(x_{1},\ldots,x_{n}):=
\begin{cases}
k_{\mu}(\{x_{1},\ldots,x_{n}\}), & \mathrm{if} \
(x_{1},\ldots,x_{n})\in \widetilde{(\R^{d})^{n}}\\
0, & \mathrm{ otherwise}
\end{cases}
\]
are called \emph{correlation functions} of the measure $\mu$. Note
that $k_\mu^{(0)}=1$.

Below we would like to mention without proof the partial case of the
well-known technical lemma (see e.g. \cite{KMZ2004}) which plays
very important role in our calculations.

\begin{lemma}
\label{Minlos} For any measurable function $H:\Ga_0\times\Ga_0\times
\Ga_0\rightarrow{\R}$
\begin{equation} \label{minlosid}
\int_{\Ga _{0}}\sum_{\xi \subset \eta }H\left( \xi ,\eta \setminus
\xi ,\eta \right) d\la \left( \eta \right) =\int_{\Ga _{0}}\int_{\Ga
_{0}}H\left( \xi ,\eta ,\eta \cup \xi \right) d\la \left( \xi
\right) d\la \left( \eta \right)
\end{equation}
if both sides of the equality make sense.
\end{lemma}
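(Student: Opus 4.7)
The plan is to unfold both sides using the defining series \eqref{LP-meas-def} for the Lebesgue--Poisson measure and reduce the identity to a purely combinatorial statement about finite sets, the familiar ``binomial expansion on a Poisson space.''

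First I would assume $H\geq 0$ so that Tonelli applies and all interchanges of sums and integrals are unproblematic; the general case then follows at the end by splitting $H=H_+-H_-$ and using the hypothesis that both sides make sense. For the left-hand side I write
\begin{equation*}
\int_{\Ga_0}\sum_{\xi\subset\eta}H(\xi,\eta\setminus\xi,\eta)\,d\la(\eta)
=\sum_{n=0}^{\infty}\frac{1}{n!}\int_{\X^n}\sum_{\xi\subset\{x_1,\ldots,x_n\}}
H\bigl(\xi,\{x_1,\ldots,x_n\}\setminus\xi,\{x_1,\ldots,x_n\}\bigr)\,dx_1\cdots dx_n.
\end{equation*}
Grouping the subsets $\xi$ by their cardinality $k$ and using the symmetry of the integrand under permutations of $(x_1,\dots,x_n)$, the inner sum contributes $\binom{n}{k}$ identical terms for each $k\in\{0,\dots,n\}$, so the expression equals
\begin{equation*}
\sum_{n=0}^\infty\sum_{k=0}^n\frac{1}{n!}\binom{n}{k}\int_{\X^n}
H\bigl(\{x_1,\ldots,x_k\},\{x_{k+1},\ldots,x_n\},\{x_1,\ldots,x_n\}\bigr)\,dx_1\cdots dx_n.
\end{equation*}

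Next, I reindex with $m:=n-k$ and use the identity $\frac{1}{n!}\binom{n}{k}=\frac{1}{k!\,m!}$. Setting $\xi=\{x_1,\dots,x_k\}$ and $\eta=\{x_{k+1},\dots,x_{k+m}\}$, and noting that $\{x_1,\dots,x_n\}=\xi\cup\eta$ (the disjointness being almost sure with respect to Lebesgue measure on $\X^{k+m}$), the double sum becomes
\begin{equation*}
\sum_{k=0}^\infty\sum_{m=0}^\infty\frac{1}{k!}\frac{1}{m!}
\int_{\X^k}\int_{\X^m}H(\xi,\eta,\xi\cup\eta)\,dx_1\cdots dx_k\,dx_{k+1}\cdots dx_{k+m},
\end{equation*}
which is exactly the Lebesgue--Poisson double integral on the right-hand side of \eqref{minlosid}.

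The only subtle point is that the identification $\{x_1,\dots,x_n\}\setminus\{x_1,\dots,x_k\}=\{x_{k+1},\dots,x_n\}$ requires the $x_i$'s to be pairwise distinct, but the diagonals carry zero Lebesgue measure and hence can be ignored; this is the place where the definition of $m^{(n)}$ as the restriction of $(dx)^n$ to $\widetilde{\X^n}$ enters. For the general signed case, applying the non-negative version separately to $|H|$ confirms absolute summability provided the right-hand side with $|H|$ is finite (which is essentially what ``both sides make sense'' means), and then Fubini legitimizes all manipulations for $H$ itself. I do not anticipate any serious obstacle: the main content is the combinatorial identity $\sum_{k+m=n}\frac{1}{k!m!}=\frac{1}{n!}\cdot 2^n$ in its ``decorated'' form, which makes this a direct Poissonian counterpart of the binomial theorem.
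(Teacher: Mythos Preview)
Your argument is correct and is the standard direct verification of this identity via the defining series for~$\la$. Note, however, that the paper does not actually prove Lemma~\ref{Minlos}: it is stated explicitly \emph{without proof} as a well-known technical fact, with a reference to~\cite{KMZ2004}. So there is nothing to compare against; your computation supplies exactly the elementary proof the paper omits.
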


\section{Non-equilibrium evolutions}\label{sect-NonEq-evol}

In a birth-and-death dynamics, particles appear and disappear randomly
in $\R^{d}$ according to birth and death rates which depend on the
configuration of the whole system. Heuristically, the corresponding
Markov generator is described by the following expression
\begin{align}
(LF)(\ga ):=&\sum_{x\in \ga }d(x,\ga \setminus x)\left[F(\ga
\setminus x)-F(\ga )\right]\notag\\&+\int_{\R^{d}}b(x,\ga
)\left[F(\ga \cup x)-F(\ga )\right] dx. \label{BaDGen}
\end{align}
Here the coefficient $d(x,\ga )\geq 0$ represents the rate at which
particle of the configuration $\ga$ located at $x$ dies
(disappears), whereas, for a given configuration $\ga $, the new
particle appears at the site $x$ with the rate $b(x,\ga )\geq 0$.

We always suppose that, for all $x\in\X$ and a.a. $x'\in\X$, the
values $d(x,\eta)$ and $b(x',\eta)$ are finite at least for all
configurations $\eta\in\Ga_0$ which do not contain the points $x$
and $x'$. Here and below, we assume that, for a.a. $x\in\X$,
the functions $d(x,\cdot)$ and $b(x,\cdot)$ are locally integrable,
i.e., for all bounded $M\in\B(\Ga_0)$
\[
\int_M \bigl( d(x,\eta)+ b(x,\eta)\bigr) d\la(\eta)<\infty.
\]

A natural way to study a Markov evolution with generator
\eqref{BaDGen} is to construct a corresponding Markov semigroup
with the generator $L$. This problem is related to the analysis of
the initial value problem
\begin{equation}\label{BKE-init}
  \frac{\partial}{\partial t} F_t = L F_t, \quad t>0, \quad
  F_t\bigr|_{t=0}=F_0
\end{equation}
in some space of functions on the configuration space
$\Ga$.
%However, in general, the r.h.s. (right hand side) of
%\eqref{BaDGen} is not defined for all $\ga\in\Ga$ even if $F\in
%K\bigl(B_\mathrm{bs}(\Ga_0)\bigr)\subset
%\mathcal{F}_\mathrm{cyl}(\Ga)$. Moreover, in this case,
%$LF\notin\mathcal{F}_\mathrm{cyl}(\Ga)$. Therefore, the problem
%\eqref{BKE-init} on whole $\Ga$ remains be open.
However, a rigorous analysis of such evolutional equations meets serious
technical problems, and was realized for the case of birth and death
generator in a finite volumes only, see \cite{HS1978}. On the other
hand, there is a very important question concerning the state evolution
associated with Markov dynamics. Namely, one can consider the
initial value problem
\begin{equation}\label{FPE-init}
  \frac{d}{d t} \langle F, \mu_t\rangle = \langle LF, \mu_t\rangle,
  \quad t>0, \quad \mu_t\bigr|_{t=0}=\mu_0, \quad F\in K\bigl(B_\mathrm{bs}(\Ga_0)\bigr)
\end{equation}
in some space of probability measures on $\bigl(\Ga,\B(\Ga)\bigr)$.
Here the pairing between functions and measure on $\Ga$ is given by
\eqref{pairing}. In fact, the solution to \eqref{FPE-init} describes
the time evolution of distributions instead of the evolution of
initial points in the Markov process. Suppose now that a solution
$\mu_t\in {\mathcal{M}}_{\mathrm{fm} }^1(\Ga )$ to \eqref{FPE-init}
exists and remains locally absolutely continuous with respect to
the Poisson measure $\pi$ for all $t>0$ provided $\mu_0$ has such a property. Then one can consider the correlation functionals
$k_t:=k_{\mu_t}$, $t\geq0$. By \eqref{eqmeans}, we may rewrite
\eqref{FPE-init} in the following way
\begin{equation}\label{ssd0}
  \frac{d}{d t} \langle\!\langle K^{-1}F, k_t\rangle\!\rangle
  = \langle\!\langle K^{-1}LF, k_t\rangle\!\rangle,\quad t>0, \quad
  k_t\bigr|_{t=0}=k_0,
\end{equation}
for all $F\in K\bigl(B_\mathrm{bs}(\Ga_0)\bigr)$. Here the duality
between functions on $\Ga_0$ is given by \eqref{duality} below (cf.
\eqref{duality-intro}). Next, if we substitute $F=KG$, $G\in
B_\mathrm{bs}(\Ga_0)$ in \eqref{ssd0}, we derive
\begin{equation}\label{ssd}
  \frac{d}{d t} \langle\!\langle G, k_t\rangle\!\rangle
  = \langle\!\langle K^{-1}LKG, k_t\rangle\!\rangle, \quad t>0, \quad
  k_t\bigr|_{t=0}=k_0
\end{equation}
for all $G\in B_\mathrm{bs}(\Ga_0)$. In applications, for concrete
birth and death rates we may usually define $(LF)(\eta)$ at least
for all $\eta\in\Ga_0$. In particular, this can be done under
the conditions on birth and death rates described above. Therefore, the
expression $K^{-1}LF$ may be defined via \eqref{k-1trans}
point-wisely. This fact allows us to consider the following operator
\[
(\hat{L}G)(\eta ) :=( K^{-1}LKG)(\eta),\quad \eta\in\Ga_0
\]
for $G\in B_{\mathrm{bs}}\left( \Ga _{0}\right)$. As a result, we
are interested in the weak solution to the equation
\begin{equation}\label{ssd1}
  \frac{\partial}{\partial t} k_t
  = \hat{L}^* k_t, \quad t>0, \quad
  k_t\bigr|_{t=0}=k_0,
\end{equation}
where $L^*$ is dual operator to $\hat{L}$ with respect to the
duality $\langle\!\langle \cdot,\cdot\rangle\!\rangle$. One of the
main aims of the present paper is to study the classical solution to
\eqref{ssd1} in a proper functional space.

To solve \eqref{ssd1}, we will use the following strategy. We start
with a pre-dual (with respect to the duality $\langle\!\langle
\cdot,\cdot\rangle\!\rangle$) initial value problem
\begin{equation}\label{ssd2}
  \frac{\partial}{\partial t} G_t
  = \hat{L} G_t, \quad t>0, \quad
  G_t\bigr|_{t=0}=G_0,
\end{equation}
which will be solved in a Banach space \eqref{space1} of so-called
{\em quasi-observables}. Namely, we construct a holomorphic
semigroup which gives a solution to \eqref{ssd2}. After this we
consider the dual semigroup which produces a weak solution to
\eqref{ssd}. And, finally, we will find a Banach space in which a
classical solution to \eqref{ssd1} exists.

\subsection{Evolutions in the space of quasi-observables}\label{subsect-evol-qo}

We start from the deriving of the expression for $\hat{L}$.
\begin{proposition}\label{prop_desc_oper}
For any $G\in B_{bs}\left( \Ga _{0}\right) $ the following formula
holds \begin{align} (\hat{L}G)(\eta ) =&-\sum_{\xi \subset \eta
}G(\xi )\sum_{x\in \xi }\bigl(K_0^{-1}d(x,\cdot\cup\xi\setminus
x)\bigr)(\eta\setminus
\xi)\notag\\
&+\sum_{\xi \subset \eta }\int_{\R^{d}}\,G(\xi \cup
x)\bigl(K_0^{-1}b(x,\cdot\cup\xi)\bigr)(\eta\setminus \xi) dx,
\qquad \eta\in\Ga_0,\label{newexpr}
\end{align}
provided all terms of the right hand side have sense.
\end{proposition}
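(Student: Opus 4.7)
The plan is to compute $\hat L G = K^{-1} L K G$ directly on a finite $\eta\in\Gamma_0$ by splitting $L = L_d + L_b$ into its death and birth pieces (the two lines of \eqref{BaDGen}) and handling both symmetrically.

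The first step simplifies the $K$-differences. For $\eta\in\Gamma_0$, the definition \eqref{KT3.15} yields
\[
(KG)(\eta\setminus x)-(KG)(\eta) = -\sum_{\zeta\subset\eta\setminus x} G(\zeta\cup x), \qquad x\in\eta,
\]
and, for Lebesgue-a.e.\ $x\in\X$, $(KG)(\eta\cup x)-(KG)(\eta) = \sum_{\zeta\subset\eta}G(\zeta\cup x)$. Plugging these into $L_d KG$ and reindexing via $(x,\zeta)\leftrightarrow(x,\xi:=\zeta\cup x)$ with $\xi\subset\eta$, $x\in\xi$, so that $\eta\setminus x = (\xi\setminus x)\cup(\eta\setminus\xi)$, one obtains
\[
(L_d KG)(\eta) = -\sum_{\xi\subset\eta} G(\xi)\sum_{x\in\xi} d\bigl(x,(\xi\setminus x)\cup(\eta\setminus\xi)\bigr),
\]
and analogously
\[
(L_b KG)(\eta) = \sum_{\xi\subset\eta}\int_{\X} G(\xi\cup x)\,b\bigl(x,\xi\cup(\eta\setminus\xi)\bigr)\,dx.
\]
The key feature of both expressions is that the dependence on $\eta$ is cleanly split between the inner variable $\xi$ and its complement $\eta\setminus\xi$, which is what makes the subsequent $K^{-1}$ push through.

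The second step applies the Möbius inversion $(K_0^{-1} H)(\eta)=\sum_{\alpha\subset\eta}(-1)^{|\eta\setminus\alpha|}H(\alpha)$ from \eqref{k-1trans}. Interchanging the summation order $\xi\subset\alpha\subset\eta$ through the bijection $\alpha\leftrightarrow(\xi,\,\beta:=\alpha\setminus\xi)$ with $\beta\subset\eta\setminus\xi$, and noting $(-1)^{|\eta\setminus\alpha|}=(-1)^{|(\eta\setminus\xi)\setminus\beta|}$, the residual sum over $\beta$ collapses into $K_0^{-1}$ acting on the partial rates $d(x,\cdot\cup(\xi\setminus x))$ and $b(x,\cdot\cup\xi)$, evaluated at $\eta\setminus\xi$. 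Adding the two contributions gives exactly \eqref{newexpr}.

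The only point requiring genuine attention is this combinatorial swap with its sign bookkeeping in the second step; everything else is algebraic substitution. The hypothesis $G\in B_\mathrm{bs}(\Gamma_0)$, together with local integrability of $d(x,\cdot)$ and $b(x,\cdot)$, makes all sums finite and all integrals absolutely convergent on the bounded $\eta$ at hand, so the Fubini-type interchanges needed to separate $\int dx$ from the finite sums over subsets are unconditional.
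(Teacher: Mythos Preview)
Your proof is correct, but takes a different route from the paper's. The paper starts from the $\star$-convolution representation
\[
(\hat{L}G)(\eta)=-\sum_{x\in\eta}\bigl(D_x\star G(\cdot\cup x)\bigr)(\eta\setminus x)+\int_\X\bigl(B_x\star G(\cdot\cup x)\bigr)(\eta)\,dx,
\]
with $D_x=K_0^{-1}d(x,\cdot)$, $B_x=K_0^{-1}b(x,\cdot)$, imported from \cite{FKO2009}; it then rewrites the $\star$-product as a nested sum, invokes the duality identity $\bigl(K_0^{-1}F(\cdot\cup\eta_2)\bigr)(\xi_1)=\bigl(K_0 G(\xi_1\cup\cdot)\bigr)(\eta_2)$, and finishes with the swap $\sum_{x\in\eta}\sum_{\xi\subset\eta\setminus x}=\sum_{\xi\subset\eta}\sum_{x\in\xi}$. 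Your argument instead computes $LKG$ from scratch via the $K$-difference formulas, reindexes to isolate the $(\xi,\eta\setminus\xi)$ split, and then pushes $K_0^{-1}$ through by the M\"obius-type interchange $\xi\subset\alpha\subset\eta\leftrightarrow(\xi,\beta)$ with $\beta\subset\eta\setminus\xi$. The advantage of your approach is that it is fully self-contained and does not rely on the $\star$-convolution formalism or the external reference; the paper's route is shorter on the page precisely because it outsources the first half of the computation, and it highlights the identity \eqref{K-1-K}, which is reused later in the paper (e.g.\ in the proof of Proposition~\ref{pr3}).
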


\begin{proof}
Following \cite{FKO2009}, for
\begin{equation}\label{Bx-Dx}
B_x=K_0^{-1}b(x,\cdot) , \quad D_x=K_0^{-1}d(x,\cdot)
\end{equation}
we have
\begin{equation}\label{oldexpr}
(\hat{L}G)(\eta )=-\sum_{x\in \eta }\left( D_{x}\star G(\cdot \cup
x)\right) (\eta \setminus x)+\int_{\X}\left( B_{x}\star G(\cdot \cup
x)\right) (\eta )dx.
\end{equation}
Here for the given $\B(\Ga_0)$-measurable functions $G_1$ and $G_2$,
we define
\begin{equation}\label{starconv}
(G_1\star G_2)(\eta )=\sum_{(\eta _1,\eta _2,\eta _3)\in
\mathcal{P}_3(\eta)} G_1(\eta _1\cup\eta _2)G_2(\eta _2\cup\eta _3),
\quad \eta\in\Ga_0,
\end{equation}
where $\mathcal{P}_3(\eta )$ denotes the set of all partitions of
$\eta$ in three parts which may be empty, see~\cite{KK2002}.
Rewriting \eqref{starconv} in the form
\begin{equation}\label{newstarconv}
(G_{1}\star G_{2})(\eta )=\sum_{\xi \subset \eta }G_{1}(\xi )\sum_{\zeta
\subset \xi }G_{2}((\eta \setminus \xi )\cup \zeta ), \quad \eta\in\Ga_0,
\end{equation}
we get
\begin{align*}
(\hat{L}G)(\eta ) =&-\sum_{x\in \eta }\sum_{\xi \subset \eta
\setminus x}G(\xi \cup x)\sum_{\zeta \subset \xi }D_{x}(((\eta
\setminus x)\setminus \xi
)\cup \zeta ) \\
&+\int_{\X}\sum_{\xi \subset \eta }G(\xi \cup x)\sum_{\beta \subset
\xi }B_{x}((\eta \setminus \xi )\cup \beta )dx.
\end{align*}
Using the fact that for any $\B(\Ga_0)$-measurable function $G$
\begin{equation*}
\left( K_{0}G\right) \left( \eta _{1}\cup \eta _{2}\right)
=\sum_{\xi \subset \eta _{1}\cup \eta _{2}}G\left( \xi \right)
=\sum_{\xi _{1}\subset \eta _{1}}\sum_{\xi _{2}\subset \eta
_{2}}G\left( \xi _{1}\cup \xi _{2}\right), \quad
\eta_1\cap\eta_2=\emptyset,
\end{equation*}
for $F=K_{0}G$ we get
\begin{equation}\label{K-1-K}
\bigl(K_{0}^{-1}F\left( \cdot \cup \eta _{2}\right) \bigr)\left( \xi
_{1}\right) =\bigl(K_{0}G\left( \xi _{1}\cup \cdot \right)
\bigr)\left( \eta _{2}\right) , \quad \xi_1\cap\eta_2=\emptyset.
\end{equation}
Now, the simple equality
\begin{equation}\label{changinfoforder}
\sum_{x\in\eta}\sum_{\xi\subset\eta\setminus
x} h(x,\xi,\eta) =\sum_{\xi\subset\eta} \sum_{x\in\xi} h(x,\xi\setminus x ,\eta),
\end{equation}
which holds for any
$\B(\X)\times\B(\Ga_{0})\times\B(\Ga_{0})$-measurable function $h$
finishes the proposition.
\end{proof}

In general, the r.h.s. of \eqref{newexpr} may be undefined. For
arbitrary and fixed $C>1$ we consider the functional space
\begin{equation}
\mathcal{L}_{C}:=L^{1}(\Ga _{0},C^{|\eta |}\la (d\eta )).
\label{space1}
\end{equation}
Throughout of the whole paper, symbol $\left\Vert \cdot \right\Vert
_{C}$ stands for the norm of the space \eqref{space1}. Now we
proceed to study rigorous properties of the operator given by the
expression \eqref{newexpr} in the Banach space $\mathcal{L}_C$.

\begin{remark}
$B_\mathrm{bs}(\Ga_0)$ is a~dense set in $\mathcal{L}_C$.
\end{remark}

\begin{remark}
 The reason to consider the weight $C^{|\cdot|}$ in the definition
 of $\mathcal{L}_C$ is the following. As it was noted above we expect to
 find a solution to \eqref{ssd1} in the space of functions on
 $\Ga_0$ which satisfy the Ruelle bound \eqref{RB-intro}. Such
 space $\K_C$ will be considered in Subsection~\ref{subsect-evol-cf}
 below. The space $\mathcal{L}_C$ is pre-dual to $\K_C$ with respect to
 duality \eqref{duality}.
\end{remark}

Set,
\begin{align}
D\left( \eta \right) :=&\sum_{x\in \eta }d\left( x,\eta
\setminus x\right) \geq 0,\quad\eta \in \Ga _{0}; \label{mult}\\
\D :=&\left\{ G\in \mathcal{L}_{C}~|~D\left( \cdot \right) G\in
\mathcal{L}_{C}\right\} .\label{dom}
\end{align}
Note that $B_\mathrm{bs}(\Ga_0) \subset \D$. In particular, $\D$ is
a dense set in $\mathcal{L}_C$.

We will show that $(\hat{L},\D)$ given by \eqref{newexpr},
\eqref{dom} generates $C_0$-semigroup on $\mathcal{L}_C$.

\begin{theorem}\label{th1}
Suppose that there exists $a_{1}\geq1$, $a_2>0$ such that for all
$\xi \in \Ga _{0}$ and a.a. $x\in\X$
\begin{align}
\sum_{x\in\xi}\int_{\Ga _{0}}\left\vert K_{0}^{-1}d\left( x,\cdot
\cup \xi \setminus x\right) \right\vert \left( \eta \right)
C^{\left\vert \eta \right\vert }d\la \left( \eta \right) \leq &
a_{1} D(\xi) ,
\label{est-d} \\
\sum_{x\in\xi}\int_{\Ga _{0}}\left\vert K_{0}^{-1}b\left( x,\cdot
\cup \xi \setminus x\right) \right\vert \left( \eta \right)
C^{\left\vert \eta \right\vert }d\la \left( \eta \right) \leq
&a_{2}D(\xi) . \label{est-b}
\end{align}
and, moreover,
\begin{equation}\label{asmall}
a_1+\frac{a_2}{C}<\frac{3}{2}.
\end{equation}
Then $(\hat{L},\D)$ is the generator of a~holomorphic semigroup
$\hat{T}\left( t\right) $ on $\mathcal{L}_{C}$.
\end{theorem}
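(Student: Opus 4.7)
The plan is to decompose $\hat{L}=L_0+L_1$ where $L_0$ is a ``diagonal'' multiplication operator which manifestly generates a holomorphic semigroup on $\mathcal{L}_C$, and $L_1$ is an off-diagonal perturbation whose relative $L_0$-bound can be estimated from \eqref{est-d}--\eqref{est-b}. The conclusion will then follow from a standard perturbation theorem for generators of holomorphic semigroups, which applies as soon as this relative bound is strictly less than $1/2$; this is exactly what the margin in \eqref{asmall} delivers.

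Reading \eqref{newexpr}, the only contribution with $\xi=\eta$ appears in the first (death) sum and equals $-D(\eta)G(\eta)$, since $\bigl(K_{0}^{-1}d(x,\cdot\cup\eta\setminus x)\bigr)(\emptyset)=d(x,\eta\setminus x)$. I therefore set $(L_0 G)(\eta):=-D(\eta)G(\eta)$ on the domain $\D$ from \eqref{dom}. As multiplication by the non-positive measurable function $-D$ on the $L^{1}$-space $\mathcal{L}_C$, $L_0$ is closed and densely defined; its resolvent on $\mathbb{C}\setminus(-\infty,0]$ acts as $((\la-L_0)^{-1}G)(\eta)=G(\eta)/(\la+D(\eta))$, and elementary pointwise estimates on $|\la+D(\eta)|$ using $D(\eta)\ge 0$ give the sectorial resolvent bound required for $L_0$ to generate a bounded holomorphic semigroup of angle $\pi/2$, explicitly $(T_0(t)G)(\eta)=e^{-tD(\eta)}G(\eta)$.

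Now set $L_1:=\hat{L}-L_0$ on $\D$ and bound each of its two pieces using Lemma~\ref{Minlos}. For the death part $-\sum_{\xi\subsetneq\eta}G(\xi)\sum_{x\in\xi}(K_{0}^{-1}d(x,\cdot\cup\xi\setminus x))(\eta\setminus\xi)$, taking absolute values and applying \eqref{minlosid} rewrites the $C^{|\eta|}$-integral as $\int_{\Ga_0}\int_{\Ga_0\setminus\{\emptyset\}}(\ldots)\,C^{|\xi|+|\eta|}d\la(\eta)\,d\la(\xi)$ (the constraint $\xi\subsetneq\eta$ becomes non-emptiness of the new inner $\eta$). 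Hypothesis \eqref{est-d} controls $\sum_{x\in\xi}\int_{\Ga_0}|K_{0}^{-1}d|(\eta)\,C^{|\eta|}d\la(\eta)$ by $a_{1}D(\xi)$; subtracting off the $\eta=\emptyset$ contribution, which equals exactly $\sum_{x\in\xi}d(x,\xi\setminus x)=D(\xi)$ (because $d\ge 0$), leaves at most $(a_{1}-1)D(\xi)$ for the remaining non-empty part, and hence $\|L_{1}^{d}G\|_C\le(a_{1}-1)\|L_0 G\|_C$. For the birth part $\sum_{\xi\subset\eta}\int G(\xi\cup x)(K_{0}^{-1}b(x,\cdot\cup\xi))(\eta\setminus\xi)\,dx$, Lemma~\ref{Minlos} combined with the standard identity $\int_{\Ga_0}\int_{\X}f(\xi\cup x,x)\,dx\,d\la(\xi)=\int_{\Ga_0}\sum_{x\in\xi'}f(\xi',x)\,d\la(\xi')$ (which contributes a factor $C^{-1}$ since $C^{|\xi|}=C^{|\xi'|-1}$) and then \eqref{est-b} yields $\|L_{1}^{b}G\|_C\le(a_{2}/C)\|L_0 G\|_C$. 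Summing,
\[
\|L_1 G\|_C\le\Bigl(a_1-1+\frac{a_2}{C}\Bigr)\|L_0 G\|_C,
\]
and \eqref{asmall} makes the prefactor strictly less than $1/2$.

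A standard perturbation theorem for generators of holomorphic semigroups (for example, Engel--Nagel, Ch.~III) then shows that $L_0+L_1=\hat{L}$ on $\D$ generates a holomorphic semigroup on $\mathcal{L}_C$, proving the theorem. I expect the main obstacle to be the careful bookkeeping in the death estimate --- identifying $-DG$ as precisely the $\xi=\eta$ contribution of \eqref{newexpr} and recognising that the $\eta=\emptyset$ piece of the $K_0^{-1}d$ integral already accounts for the full $D(\xi)$ --- since it is exactly this subtraction that upgrades the naive bound $a_1\|L_0 G\|_C$ from \eqref{est-d} to the sharp $(a_1-1)\|L_0 G\|_C$ and thereby explains the threshold $3/2$ (rather than $1$ or $2$) appearing in \eqref{asmall}.
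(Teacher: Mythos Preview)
Your proposal is correct and follows essentially the same route as the paper: the same decomposition $\hat{L}=L_0+L_1$ with $L_0$ multiplication by $-D$, the same relative bound $\|L_1 G\|_C\le\bigl(a_1-1+\tfrac{a_2}{C}\bigr)\|L_0 G\|_C$ obtained via Lemma~\ref{Minlos} (with exactly the subtraction of the $\eta=\emptyset$ term you single out), and the same appeal to the Engel--Nagel perturbation theorem for sectorial generators. The paper makes the last step slightly more explicit by choosing $\omega\in(0,\pi/2)$ so that the sectorial constant $M=1/\cos\omega$ satisfies $a_1-1+\tfrac{a_2}{C}<\tfrac{1}{1+M}$, which is possible precisely because the relative bound is $<\tfrac12$; your remark that the bound $<\tfrac12$ suffices is exactly this observation.
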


\begin{remark}
 Conditions \eqref{est-d}--\eqref{asmall} express an essential
 role of the death rate in our construction. They are crucial
 for the existence of the classical solution
 to the evolution equation \eqref{ssd2} in the space $\mathcal{L}_C$ of
 quasi-observables (cf.~Remark~\ref{necess} below). Note also,
 that alternatively to a semigroup approach one can
 study local in time solutions to \eqref{ssd2} also.
 For a particular model it was realized in the recent paper \cite{FKKoz2011}.
\end{remark}

\begin{proof}
Let us consider the multiplication operator $\left( L_{0},\D\right)
$ on $ \mathcal{L}_{C}$ given by
\begin{equation} (L_{0}G)(\eta )=-D\left( \eta \right) G(\eta
),\quad G\in \D,\ \ \eta \in \Ga _{0}. \label{L0oper}
\end{equation}
We recall that a~densely defined closed operators $A$ on
$\mathcal{L}_{C}$ is called sectorial of angle $\omega\in
(0,\,\frac{ \pi }{2})$ if its resolvent set $\rho (A)$ contains the
sector
\begin{equation*}
\mathrm{Sect}\left( \frac{\pi }{2}+\omega \right) :=\left\{ z \in
\mathbb{C}\,\Bigm||\arg z |<\frac{\pi }{2}+\omega \right\} \setminus\{0\}
\end{equation*} and for each $\eps\in(0;\omega)$ there exists
$M_\eps\geq 1$ such that
\begin{equation}\label{resbound}
||R(z,A)||\leq \frac{M_{\eps }}{|z|}
\end{equation}
for all $z\neq 0$ with $|\arg z |\leq \dfrac{\pi }{2}+\omega -\eps.$
Here and below we will use notation
\[
R(z,A):=(z\1 -A)^{-1}, \quad z\in\rho(A).
\]
The set of all sectorial operators of angle $\omega\in (0,\,\frac{
\pi }{2})$ in $\mathcal{L}_C$ we denote by $\mathcal{H} _{C}(\omega
)$. Any $A\in\mathcal{H} _{C}(\omega )$ is a~generator of a~bounded
semigroup $T(t)$ which is holomorphic in the sector $|\arg
\,t|<\omega $ (see e.g. \cite[Theorem II.4.6]{EN2000}). One can
prove the following lemma.
\begin{lemma}\label{lem1}
The operator $\left( L_{0},\D\right) $ given by \eqref{L0oper} is a
generator of a~contraction semigroup on $\mathcal{L}_{C}.$ Moreover,
$L_{0}\in \mathcal{H}_{C}(\omega)$ for all $\omega \in (0,\,\frac{
\pi }{2})$ and \eqref{resbound} holds with
$M_\eps=\frac{1}{\cos\omega}$ for all $\eps\in(0;\omega)$.
\end{lemma}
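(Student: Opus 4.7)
Since $L_{0}$ is a pure multiplication operator on the $L^{1}$-space $\mathcal{L}_{C}$ by the non-negative function $-D(\cdot)$, the plan is to handle everything by direct computation, without invoking general Hille--Yosida machinery.

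First I would verify the structural properties of $(L_{0},\D)$. Density of $\D$ is immediate because $B_{\mathrm{bs}}(\Ga_{0})\subset \D$, and closedness follows by a standard multiplication-operator argument: if $G_{n}\to G$ and $L_{0}G_{n}\to H$ in $\mathcal{L}_{C}$, passing to a subsequence converging $\la$-a.e.\ gives $H(\eta)=-D(\eta)G(\eta)$ a.e., hence $G\in\D$ and $L_{0}G=H$.

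Next I would exhibit the semigroup explicitly by setting
\[
(T(t)G)(\eta):=e^{-tD(\eta)}G(\eta),\qquad t\geq 0,\ \eta\in\Ga_{0}.
\]
Since $D(\eta)\geq 0$, pointwise $|e^{-tD(\eta)}|\leq 1$, and $\|T(t)G\|_{C}\leq \|G\|_{C}$, giving contractivity. Strong continuity at $t=0$ follows from dominated convergence applied to $|e^{-tD(\eta)}-1|\,|G(\eta)|\leq 2|G(\eta)|$; the semigroup property is trivial. A standard difference-quotient calculation (again by dominated convergence, using the estimate $|e^{-tD}-1|/t\leq D$ on $\D$) identifies the generator as exactly $(L_{0},\D)$.

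The main computation is the sectorial resolvent bound. For $z\neq 0$ the resolvent is the pointwise multiplier
\[
(R(z,L_{0})G)(\eta)=\frac{G(\eta)}{z+D(\eta)},
\]
so $\|R(z,L_{0})\|\leq \bigl(\inf_{s\geq 0}|z+s|\bigr)^{-1}$. The key geometric fact is: for $z\in\mathbb{C}\setminus\{0\}$ and $s\geq 0$,
\[
\inf_{s\geq 0}|z+s|=
\begin{cases}
|z|,& \Re z\geq 0,\\
|\Im z|=|z|\,|\sin(\arg z)|,& \Re z<0,
\end{cases}
\]
the latter minimum being attained at $s=-\Re z$. Fix $\omega\in(0,\pi/2)$ and $\eps\in(0,\omega)$. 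For $z$ with $|\arg z|\leq \pi/2+\omega-\eps$, write $|\arg z|=\pi/2+\alpha$ with $\alpha\leq \omega-\eps$ in the relevant case $\Re z<0$; then $|\sin(\arg z)|=\cos\alpha\geq \cos(\omega-\eps)\geq \cos\omega$. Combining both cases,
\[
\inf_{s\geq 0}|z+s|\geq |z|\cos\omega,
\]
whence \eqref{resbound} holds with $M_{\eps}=1/\cos\omega$. In particular $L_{0}\in\mathcal{H}_{C}(\omega)$ for every $\omega\in(0,\pi/2)$. The specialization $z>0$ gives $\|R(z,L_{0})\|\leq 1/z$, consistent with the contractivity established above.

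There is no serious obstacle; the only point requiring care is the elementary but crucial geometric lemma about $\inf_{s\geq 0}|z+s|$, which captures the essential role of $D\geq 0$ in turning the multiplication operator into a sectorial generator.
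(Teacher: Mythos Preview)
Your argument is correct and, for the sectorial resolvent estimate, essentially identical to the paper's: both write $R(z,L_{0})$ as multiplication by $(z+D(\eta))^{-1}$ and use the same case split $\Re z\geq 0$ versus $\Re z<0$, together with $|\sin(\arg z)|\geq\cos\omega$ on the sector, to obtain $\|R(z,L_{0})\|\leq (|z|\cos\omega)^{-1}$.

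The only genuine difference is in how the contraction-semigroup assertion is obtained. The paper stays with the resolvent: from $|D(\eta)+z|\geq\Re z$ for $\Re z>0$ it gets $\|R(z,L_{0})\|\leq 1/\Re z$ and then invokes the Hille--Yosida theorem. You instead write down the semigroup $T(t)G=e^{-tD}G$ explicitly and verify contractivity, strong continuity, and the generator by dominated convergence. Your route is slightly more hands-on and avoids quoting Hille--Yosida, at the cost of one extra step you leave implicit: to conclude that the generator of $T(t)$ has domain \emph{exactly} $\D$ (not a proper extension), you need either a Fatou-type argument showing $G\in\Dom(A)\Rightarrow DG\in\mathcal{L}_{C}$, or the observation that $(\1-L_{0})\D=\mathcal{L}_{C}$ forces $A=L_{0}$. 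This is routine for multiplication operators, but worth one sentence if you keep this approach.
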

\begin{proof}[Proof of Lemma~\ref{lem1}]
It is not difficult to show that the densely defined operator
$L_{0}$ is closed in~$\mathcal{L}_{C}$. Let $0<\omega <\frac{\pi
}{2}$ be arbitrary and fixed. Clear, that for all $z \in
\mathrm{Sect}\left( \frac{\pi }{2} +\omega \right) $
\begin{equation*}
\bigl|D\left( \eta \right) +z \bigr|>0,\quad \eta \in \Ga _{0}.
\end{equation*} Therefore, for any $z \in \mathrm{Sect}\left( \frac{\pi }{2}+\omega
\right) $ the inverse operator $R(z,L_{0})=(z\1
- L_{0})^{-1}$, the action of
which is given by
\begin{equation}
\lbrack R(z,L_{0})G](\eta )=\frac{1}{D\left( \eta \right) +z
}\,G(\eta ), \label{necf}
\end{equation} is well defined on the whole space $\mathcal{L}_{C}$. Moreover,
\[
|D(\eta)+z|=\sqrt{(D(\eta)+\Re z)^2+(\Im
z)^2}\geq
\begin{cases}
|z|, &\mathrm{if} \ \Re z\geq 0\\
|\Im\,z|, &\mathrm{if} \ \Re z<0
\end{cases},
\]
and for any $z\in\Sect\left(\frac{\pi}{2}+\omega\right)$
\[
|\Im\,z|=|z| |\sin \arg z|\geq|z|\left|\sin\left(\frac{\pi}{2}+\omega\right)\right|=|z|\cos\omega.
\]
As a~result,
 for any $z\in\Sect\left(\frac{\pi}{2}+\omega\right)$\begin{equation}\label{resbound-ex}
||R(z,L_{0})||\leq \frac{1}{|z |\cos\omega},
\end{equation}
that implies the second assertion. Note also that
$|D(\eta)+z|\geq\Re z$ for $\Re z>0$, hence,
\begin{equation}\label{HY}
||R(z,L_{0})||\leq
\frac{1}{\Re z},
\end{equation}
that proves the first statement by the classical
Hille--Yosida theorem.
\end{proof}
\noindent For any $G\in B_{bs}\left( \Ga _{0}\right) $ we define
\begin{align}
\nonumber \left( L_{1}G\right) \left( \eta \right)
:=&\,(\hat{L}G)(\eta)-(L_0G)(\eta)\\=&-\sum_{\xi\subsetneq \eta
}G(\xi )\sum_{x\in \xi }\bigl(K_0^{-1}d(x,\cdot\cup\xi\setminus
x)\bigr)(\eta\setminus
\xi)\notag\\
&+\sum_{\xi \subset \eta }\int_{\R^{d}}\,G(\xi \cup
x)\bigl(K_0^{-1}b(x,\cdot\cup\xi)\bigr)(\eta\setminus \xi)
dx.\label{operL1def}
\end{align}
Next Lemma shows that, under conditions \eqref{est-d}, \eqref{est-b}
above, the operator $L_1$ is relatively bounded by the operator
$L_0$.
\begin{lemma}\label{lem2}
Let \eqref{est-d}, \eqref{est-b} hold. Then $(L_1,\D)$ is a
well-defined operator in $\mathcal{L}_C$ such that
\begin{equation}
\left\Vert L_{1}R(z,L_{0})\right\Vert \leq a_1-1+\frac{a_2}{C}, \quad \Re z>0\label{relbound}
\end{equation}
and
\begin{equation}\label{deep}
\|L_1 G\|\leq \Bigl(a_1-1+\frac{a_2}{C}\bigr)\|L_0G\|,
\quad G\in\D.
\end{equation}
\end{lemma}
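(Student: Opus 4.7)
The plan is to prove the pointwise relative bound \eqref{deep} directly from the explicit formula \eqref{operL1def} via the Minlos identity \eqref{minlosid}, and then derive \eqref{relbound} by combining \eqref{deep} with a simple bound $\|L_0R(z,L_0)\|\leq 1$ on the right half-plane. Well-definedness of $L_1$ on $\D$ will follow from \eqref{deep} itself, since $L_0G\in\mathcal{L}_C$ for $G\in\D$.

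First I would apply the triangle inequality to $(L_1 G)(\eta)$ in \eqref{operL1def} and split $\|L_1G\|_C$ into a death-part $I_1$ and a birth-part $I_2$. For $I_1$, I apply Lemma~\ref{Minlos} to transform $\int_{\Ga_0}\sum_{\xi\subsetneq\eta}(\,\cdot\,)\,C^{|\eta|}d\la(\eta)$ into an iterated integral $\iint_{\Ga_0\times\Ga_0}(\cdot)\,d\la(\xi)d\la(\eta)$; the strict inclusion $\xi\subsetneq\eta$ means exactly that the new variable $\eta$ must satisfy $\eta\neq\emptyset$. Using that $\xi$ and the new $\eta$ are disjoint, the weight factorises as $C^{|\xi|}C^{|\eta|}$, so the inner integral is
\[
\int_{\Ga_0}\1_{\eta\neq\emptyset}\sum_{x\in\xi}\bigl|K_0^{-1}d(x,\cdot\cup\xi\setminus x)\bigr|(\eta)\,C^{|\eta|}d\la(\eta)\leq a_1 D(\xi)-D(\xi)
\]
by \eqref{est-d}, because at $\eta=\emptyset$ the operator $K_0^{-1}$ is the identity and $\sum_{x\in\xi}d(x,\xi\setminus x)=D(\xi)$. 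Hence $I_1\leq(a_1-1)\|L_0G\|_C$.

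For $I_2$, after the Minlos step I use the elementary identity
\[
\int_{\Ga_0}\int_{\X}f(x,\xi,\xi\cup x)\,dx\,d\la(\xi)=\int_{\Ga_0}\sum_{x\in\xi'}f(x,\xi'\setminus x,\xi')\,d\la(\xi'),
\]
which on the weighted space absorbs one factor of $C$: rewriting $C^{|\xi|}=C^{-1}C^{|\xi\cup x|}$ yields the prefactor $1/C$. The inner sum is then bounded by $a_2 D(\xi')$ via \eqref{est-b}, giving $I_2\leq(a_2/C)\|L_0G\|_C$. Summing the two contributions produces \eqref{deep}.

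For \eqref{relbound}, I use the explicit action $R(z,L_0)G(\eta)=G(\eta)/(D(\eta)+z)$ from \eqref{necf}, so that $L_0R(z,L_0)G(\eta)=-D(\eta)G(\eta)/(D(\eta)+z)$. For $\Re z>0$ one has $|D(\eta)+z|^2\geq(D(\eta)+\Re z)^2\geq D(\eta)^2$, hence $\|L_0R(z,L_0)\|\leq1$. Applying \eqref{deep} to $R(z,L_0)G$ then yields \eqref{relbound}. The only delicate step is the bookkeeping in Step 2: one must keep track of the strict-versus-nonstrict inclusion in the death term (so that the $\emptyset$-contribution is correctly subtracted to convert $a_1$ into $a_1-1$, which is why \eqref{est-d} is assumed with $a_1\geq 1$) and the weight redistribution $C^{|\xi|}\mapsto C^{-1}C^{|\xi'|}$ in the birth term that generates the factor $1/C$.
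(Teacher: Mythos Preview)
Your proposal is correct and follows essentially the same route as the paper: split $L_1$ into its death and birth parts, apply the Minlos identity \eqref{minlosid} to each, use \eqref{est-d} (with the $\eta=\emptyset$ term subtracted because of the strict inclusion) and \eqref{est-b} together with the weight shift $C^{|\xi|}=C^{-1}C^{|\xi\cup x|}$. The only organizational difference is that the paper carries the resolvent factor $1/(z+D(\xi))$ through the Minlos computation to obtain \eqref{relbound} directly and then remarks that the same estimates yield \eqref{deep}, whereas you prove \eqref{deep} first and deduce \eqref{relbound} from the one-line bound $\|L_0R(z,L_0)\|\le 1$; your ordering is marginally cleaner but the ingredients are identical.
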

\begin{proof}[Proof of Lemma~\ref{lem2}]
By Lemma~\ref{Minlos}, we have for any $G\in \mathcal{L}_{C}$, $\Re
z>0$
\begin{align*}
&\int_{\Ga _{0}}\biggl\vert -\sum_{\xi\subsetneq \eta
}\frac{1}{z+D(\xi)}G(\xi )\sum_{x\in \xi
}\bigl(K_0^{-1}d(x,\cdot\cup\xi\setminus x)\bigr)(\eta\setminus
\xi)\biggr\vert C^{\left\vert \eta \right\vert }d\la
\left( \eta \right) \\
\leq &\int_{\Ga _{0}}\sum_{\xi \subsetneq \eta
}\frac{1}{|z+D(\xi)|}\left\vert G(\xi )\right\vert \sum_{x\in \xi
}\bigl|K_0^{-1}d(x,\cdot\cup\xi\setminus x)\bigr|(\eta\setminus \xi)
C^{\left\vert \eta \right\vert
}d\la \left( \eta \right) \\
=&\int_{\Ga _{0}}\frac{1}{|z+D(\xi)|}\left\vert G(\xi )\right\vert
\sum_{x\in \xi }\int_{\Ga
_{0}}\bigl|K_0^{-1}d(x,\cdot\cup\xi\setminus
x)\bigr|(\eta)C^{\left\vert \eta \right\vert }d\la \left( \eta
\right) C^{\left\vert \xi \right\vert
}d\la \left( \xi \right) \\
&-\int_{\Ga _{0}}\frac{1}{|z+D(\eta)|}D\left( \eta \right)
\left\vert G(\eta )\right\vert C^{\left\vert \eta \right\vert }d\la
\left( \eta
\right)\\
\leq&(a_{1}-1)\int_{\Ga _{0}} \frac{1}{\Re z+D(\eta)}
D(\eta)|G(\eta)|C^{|\eta|} d\la(\eta)\leq(a_1-1)\|G\|_C,
\end{align*} and \begin{align*}
&\int_{\Ga _{0}}\biggl\vert \sum_{\xi \subset \eta }\int_{\R
^{d}}\,\frac{1}{z+D(\xi\cup x)}G(\xi \cup
x)\bigl(K_0^{-1}b(x,\cdot\cup\xi)\bigr) (\eta\setminus
\xi)dx\biggr\vert C^{\left\vert \eta \right\vert }d\la
\left( \eta \right) \\
\leq &\int_{\Ga _{0}}\int_{\Ga
_{0}}\int_{\R^{d}}\,\frac{1}{|z+D(\xi\cup x)|}\left\vert G(\xi \cup
x)\right\vert \left\vert K_0^{-1}b(x,\cdot\cup\xi) \right\vert
(\eta)dxC^{\left\vert \eta \right\vert }C^{\left\vert \xi
\right\vert }d\la \left( \xi \right) d\la \left(
\eta \right) \\
=&\frac{1}{C}\int_{\Ga _{0}}\,\frac{1}{|z+D(\xi)|}\left\vert G(\xi
)\right\vert \sum_{x\in \xi }\int_{\Ga _{0}} \left\vert
K_0^{-1}b(x,\cdot\cup\xi\setminus x) \right\vert (\eta)
C^{\left\vert \eta \right\vert }d\la \left( \eta \right)
C^{\left\vert \xi \right\vert }d\la \left( \xi \right) \\
\leq&\frac{a_{2}}{C}\int_{\Ga _{0}}\,\frac{1}{\Re
z+D(\xi)}\left\vert G(\xi )\right| D(\xi) C^{\left\vert \xi
\right\vert }d\la \left( \xi \right)\leq\frac{a_2}{C}\|G\|_C.
\end{align*}
Combining these inequalities we obtain \eqref{relbound}.
The same considerations yield
\begin{align*}
&\int_{\Ga _{0}}\biggl\vert -\sum_{\xi\subsetneq \eta }G(\xi
)\sum_{x\in \xi }\bigl(K_0^{-1}d(x,\cdot\cup\xi\setminus
x)\bigr)(\eta\setminus \xi)\biggr\vert C^{\left\vert \eta
\right\vert }d\la
\left( \eta \right) \\
&\quad+\int_{\Ga _{0}}\biggl\vert \sum_{\xi \subset \eta }\int_{\R
^{d}}\,G(\xi \cup x)\bigl(K_0^{-1}b(x,\cdot\cup\xi)\bigr)
(\eta\setminus \xi)dx\biggr\vert C^{\left\vert \eta \right\vert
}d\la
\left( \eta \right) \\
\leq&\left((a_{1}-1)+\frac{a_{2}}{C}\right)\int_{\Ga
_{0}}\,\left\vert G(\eta )\right| D(\eta) C^{\left\vert \eta
\right\vert }d\la \left( \eta \right),
\end{align*}
that proves \eqref{deep} as well.
\end{proof}
\noindent And now we proceed to finish the proof of the
Theorem~\ref{th1}. Let us set
$\theta:=a_1+\frac{a_2}{C}-1\in\bigl(0;\frac{1}{2}\bigr)$. Then
$\frac{\theta}{1-\theta}\in(0;1)$. Let $\omega\in
\bigl(0;\frac{\pi}{2}\bigr)$ be such that
$\cos\omega<\frac{\theta}{1-\theta}$. Then, by the proof of
Lemma~\ref{lem1}, $L_{0}\in \mathcal{H}_{C}(\omega)$ and
$||R(z,L_0)||\leq \frac{M}{|z|}$ for all $z\neq 0$ with $|\arg z
|\leq \dfrac{\pi }{2}+\omega $, where $M:=\frac{1}{\cos \omega}$.
Then
$$\theta=\frac{1}{1+\frac{1-\theta}{\theta}}<\frac{1}{1+\frac{1}{\cos\omega}}=\frac{1}{1+M}.$$
Hence, by \eqref{deep} and the proof of \cite[Theorem
III.2.10]{EN2000}, we have that $(\hat{L}=L_0+L_1,\D)$ is a
generator of holomorphic semigroup on $\mathcal{L}_C$.
\end{proof}

\begin{remark}
By \eqref{mult}, the estimates \eqref{est-d}, \eqref{est-b} are
satisfied if
\begin{align}
\int_{\Ga _{0}}\left\vert K_{0}^{-1}d\left( x,\cdot \cup \xi \right)
\right\vert \left( \eta \right) C^{\left\vert \eta \right\vert }d\la
\left( \eta \right) \leq & a_{1} d\left( x,\xi \right) , \label{est-d1} \\
\int_{\Ga _{0}}\left\vert K_{0}^{-1}b\left( x,\cdot \cup \xi \right)
\right\vert \left( \eta \right) C^{\left\vert \eta \right\vert }d\la
\left( \eta \right) \leq &a_{2}d\left( x,\xi \right) .
\label{est-b1}
\end{align}
\end{remark}

\begin{example} (Glauber-type dynamics in continuum). \label{ex-Gl}
Let $L$ be given by \eqref{BaDGen} with
\begin{align}
d(x,\ga\setminus x)&=\exp\Bigl\{s\sum_{y\in\ga\setminus x}\phi(x-y)\Bigr\}, &&x\in\ga,\ \ga\in\Ga,\label{Gl-d}\\
b(x,\ga)&=z\exp\Bigl\{(s-1)\sum_{y\in\ga}\phi(x-y)\Bigr\}, &&x\in\X\setminus\ga,\ \ga\in\Ga,\label{Gl-b}
\end{align}
where $\phi:\X\rightarrow\R_+$ is a~pair potential,
$\phi(-x)=\phi(x)$, $z>0$ is an activity parameter and $s\in[0;1]$.
For any $s\in[0;1]$ the operator $L$ is well defined and, moreover,
symmetric in the space $L^2(\Ga,\mu)$, where $\mu$ is a~Gibbs
measure, given by the pair potential $\phi$ and activity parameter
$z$ (see e.g. \cite{KLR2007} and references therein). This gives
possibility to study the corresponding semigroup in $L^2(\Ga,\mu)$.
In the case $s=0$, the corresponding dynamics was also studied in
another Banach spaces, see e.g. \cite{KKZ2006, FKKZ2010, FKK2010}.
Below we show that one of the main result of the paper stated in
Theorem~\ref{th1} can be applied to the case of arbitrary
$s\in[0;1]$. Set
\begin{equation}\label{integr_cond}
\beta_{\tau}:=\int_\X\bigl\vert e^{\tau\phi(x)}-1\bigr\vert
dx\in[0;\infty], \quad \tau\in[-1;1].
\end{equation}
Let $s$ be arbitrary and fixed. Suppose that $\beta_s<\infty$,
$\beta_{s-1}<\infty$. Then,
 by \eqref{Gl-d}, \eqref{Kexp}, and \eqref{intexp}
\begin{align*}
 K_{0}^{-1}d\left( x,\cdot \cup \xi \right)
 \left( \eta \right)&=d(x,\xi)\,e_\la(e^{s\phi(x-\cdot)}-1,\eta),\\
\int_{\Ga _{0}}\left\vert K_{0}^{-1}d\left( x,\cdot \cup \xi \right)
\right\vert \left( \eta \right) C^{\left\vert \eta \right\vert }d\la
\left( \eta \right)&=d(x,\xi)e^{C\beta_{s}},
\end{align*}
and, analogously,
\[
\int_{\Ga _{0}}\left\vert K_{0}^{-1}b\left( x,\cdot \cup \xi \right)
\right\vert \left( \eta \right) C^{\left\vert \eta \right\vert }d\la
\left( \eta \right)=b(x,\xi)e^{C\beta_{s-1}}\leq
zd(x,\xi)e^{C\beta_{s-1}},
\]
since $\phi\geq0$. Therefore, to apply Theorem~\ref{th1} we should
assume additionally that
\begin{equation}\label{sn0smallz}
e^{C\beta_{s}} + \frac{z}{C}e^{C\beta_{s-1}}<\frac{3}{2}.
\end{equation}
In particular, for $s=0$ we obtain the condition
(cf. \cite{KKZ2006})
\begin{equation}\label{s0smallz}
\frac{z}{C}e^{C\beta_{-1}}<\frac{1}{2}.
\end{equation}
\end{example}

\begin{example}\label{ex-BDLP}
(Bolker--Dieckman--Law--Pacala (BDLP) model) This example describes
the model of plant ecology, see \cite{FKK2009} and references
therein. Let $L$ be given by \eqref{BaDGen} with
\begin{align}
d(x,\ga\setminus x)&=m + \varkappa^-\sum_{y\in\ga\setminus x}a^-(x-y), &&x\in\ga,\ \ga\in\Ga,\label{BDLP-d}\\
b(x,\ga)&=\varkappa^+\sum_{y\in\ga}a^+(x-y), &&x\in\X\setminus\ga,\
\ga\in\Ga,\label{BDLP-b}
\end{align}
where $m>0$, $\varkappa^\pm\geq0$, $0\leq a^\pm\in L^1(\X,dx)\cap
L^\infty(\X,dx)$, $\int_\X a^\pm(x)dx=1$. Then
\begin{gather*}
K_{0}^{-1}d\left( x,\cdot \cup \xi \right)
 \left( \eta \right)=d(x,\xi)0^{|\eta|}+\varkappa^-\1_{\Ga^{(1)}}(\eta)\sum_{y\in\eta}a^{-}(x-y),\\
\int_{\Ga _{0}}\left\vert K_{0}^{-1}d\left( x,\cdot \cup \xi \right)
\right\vert \left( \eta \right) C^{\left\vert \eta \right\vert }d\la
\left( \eta \right)=d(x,\xi)+C\varkappa^-,
\end{gather*}
and, analogously,
\[
\int_{\Ga _{0}}\left\vert K_{0}^{-1}b\left( x,\cdot \cup \xi \right)
\right\vert \left( \eta \right) C^{\left\vert \eta \right\vert }d\la
\left( \eta \right)=b(x,\xi)+C\varkappa^+.
\]
Therefore, if we suppose, for example, that (cf. \cite{FKK2009})
\begin{align}
 4\varkappa^-C&<m\label{smallparBDLP-1}\\
4\varkappa^+a^+(x)&\leq C\varkappa^-a^-(x),
\quad x\in\X,\label{smallparBDLP-2}
\end{align}
then there exists $\delta>0$ such that
\[
d(x,\xi)+C\varkappa^-\leq d(x,\xi)+\frac{m}{4+\delta}\leq\Bigl(1+\frac{1}{4+\delta}\Bigr)d(x,\xi)
\]
and
\[
b(x,\xi)+C\varkappa^+\leq\frac{C}{4}\varkappa^-\sum_{y\in\xi}a^-(x-y)+\frac{Cm}{16}<\frac{C}{4}d(x,\xi),
\] since $4\varkappa^+\leq
C\varkappa^-<\frac{m}{4}$. The last bound we get integrating both
sides of \eqref{smallparBDLP-2} over $\X$.

Hence, \eqref{est-d}, \eqref{est-b} hold
and
\[
a_1+\frac{a_2}{C}=1+\frac{1}{4+\delta}+\frac{1}{4}<\frac{3}{2}.
\]
\end{example}

\begin{remark}\label{necess}
 It was shown in \cite{FKK2009} that the condition
 like \eqref{smallparBDLP-1} is essential. Namely,
 if $m>0$ is arbitrary small the operator
 $\hat{L}$ will not be even accretive in $\mathcal{L}_C$.
\end{remark}

\subsection{Evolutions in the space of correlation
functions}\label{subsect-evol-cf}

In this Subsection we will use the semigroup $\hat{T}(t)$ acting oh
the space of quasi-observables for a construction of solution to the
evolution equation \eqref{ssd1} on space of correlation functions.

We denote $d\la _{C}:=C^{|\cdot |}d\la $; and the dual space $(
\mathcal{L}_{C})^{\prime }=\bigl(L^{1}(\Ga _{0},d\la _{C})\bigr)
^{\prime }=L^{\infty }(\Ga _{0},d\la _{C})$. The space $(\mathcal{L}
_{C})^{\prime }$ is isometrically isomorphic to the Banach space
\begin{equation*}
{\K}_{C}:=\left\{ k:\Ga _{0}\rightarrow {\R}\,\Bigm| k\cdot
C^{-|\cdot |}\in L^{\infty }(\Ga _{0},\la )\right\}
\end{equation*} with the norm
\[
\Vert k\Vert _{{\K}_{C}}:=\Vert C^{-|\cdot |}k(\cdot )\Vert
_{L^{\infty }(\Ga _{0},\la )},
\]
where the isomorphism is given by the isometry $R_{C}$
\begin{equation}
(\mathcal{L}_{C})^{\prime }\ni k\longmapsto R_{C}k:=k\cdot C^{|\cdot
|}\in { \K}_{C}. \label{isometry}
\end{equation}

In fact, one may consider the duality between the Banach spaces
$\mathcal{L} _{C}$ and ${\K}_{C}$ given by the following expression
\begin{equation}
\left\langle \!\left\langle G,\,k\right\rangle \!\right\rangle
:=\int_{\Ga _{0}}G\cdot k\,d\la ,\quad G\in \mathcal{L}_{C},\ k\in {
\K}_{C} \label{duality}
\end{equation} with $\left\vert \left\langle \!\left\langle G,k\right\rangle
\!\right\rangle \right\vert \leq \Vert G\Vert _{C}\cdot \Vert k\Vert
_{{ \K}_{C}}$. It is clear that $k\in {\K}_{C}$ implies
\[
|k(\eta )|\leq \Vert k\Vert _{{\K}_{C}}\,C^{|\eta|} \qquad
\mathrm{for} \ \la\text{-a.a.} \ \eta \in \Ga _{0}.
\]

Let $\bigl({\hat{L}}^{\prime },\Dom({\hat{L}}^{\prime })\bigr)$ be
an operator in $(\mathcal{L}_{C})^{\prime }$ which is dual to the
closed operator $\bigl( {\hat{L}},\D\bigr)$. We consider also its
image on ${\K} _{C}$ under the isometry $R_{C}$. Namely, let
${\hat{L}}^{\ast }=R_{C}{\hat{L }}^{\prime }R_{C^{-1}}$ with the
domain $\Dom({\hat{L}}^{\ast })=R_{C}\Dom({\hat{L}}^{\prime })$.

Similarly, one can consider the adjoint semigroup $\hat{T}^{\prime
}(t)$ in $(\mathcal{L}_{C})^{\prime }$ and its image $\hat{T}^{\ast
}(t)$ in ${\K}_{C}$. The space $\mathcal{L}_C$ is not reflexive,
hence, $\hat{T}^{\ast }(t)$ is not $C_0$-semigroup in ${\K}_{C}$.
However, from the general theory (see e.g. \cite{EN2000}) the last
semigroup will be weak*-continuous, weak*-differentiable at $0$ and
${\hat{L}}^\ast$ will be weak*-generator of ${\hat{T}}^\ast(t)$.
Therefore, one has an evolution in the space of correlation
functions. In fact, we have a~solution to the evolution equation
\eqref{ssd1}, in a~weak*-sense. This subsection is devoted to the
study of a~strong solution to this equation.

\begin{proposition}\label{pr3}
 Let \eqref{est-d}, \eqref{est-b} be satisfied. Suppose that there exists
$A>0$, $N\in \N_{0}$, $\nu\geq 1$ such that for $\xi \in \Ga _{0}$
and $x\notin \xi $
\begin{equation}
d\left( x,\xi \right) \leq
A(1+\left\vert \xi \right\vert) ^{N}\nu^{\left\vert \xi \right\vert
}, \qquad \label{D-bdd}
\end{equation} Then for any $\alpha \in \left( 0;\frac{1}{\nu}\right) $
\begin{equation}\label{incldom}
\K_{\alpha C}\subset \Dom({\hat{L}}^{\ast }).
\end{equation}
\end{proposition}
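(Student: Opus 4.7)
The plan is to show directly that for every $k\in\K_{\alpha C}$ the linear functional
\[
\D\ni G\longmapsto \langle\!\langle \hat{L}G, k\rangle\!\rangle=\int_{\Ga_0}(\hat{L}G)(\eta)k(\eta)d\la(\eta)
\]
is bounded in the $\|\cdot\|_C$-norm of $\mathcal{L}_C$. Via the isometry $R_C$ and the definition $\hat{L}^{\ast}=R_C\hat{L}'R_{C^{-1}}$, this is precisely the condition $k\in\Dom(\hat{L}^{\ast})$. Since $|k(\eta)|\leq \|k\|_{\K_{\alpha C}}(\alpha C)^{|\eta|}$ pointwise, the task reduces to bounding
\[
I(G):=\int_{\Ga_0}|(\hat{L}G)(\eta)|(\alpha C)^{|\eta|}d\la(\eta)
\]
by a constant multiple of $\|G\|_C$.

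I would substitute the formula for $\hat{L}G$ from Proposition~\ref{prop_desc_oper} into $I(G)$ and apply the Minlos identity (Lemma~\ref{Minlos}) to unfold each sum $\sum_{\xi\subset\eta}$ into a double integration over $\la\otimes\la$. The death part becomes directly
\[
\int_{\Ga_0}|G(\xi)|(\alpha C)^{|\xi|}\sum_{x\in\xi}\int_{\Ga_0}|K_0^{-1}d(x,\cdot\cup\xi\setminus x)|(\eta)(\alpha C)^{|\eta|}d\la(\eta)d\la(\xi).
\]
For the birth part, after the Minlos step I would apply the elementary identity
\[
\int_{\Ga_0}\!\int_{\X}f(\xi\cup x,x)dx(\alpha C)^{|\xi|}d\la(\xi)=\frac{1}{\alpha C}\int_{\Ga_0}\sum_{x\in\xi}f(\xi,x)(\alpha C)^{|\xi|}d\la(\xi)
\]
to absorb $x$ into $\xi$, producing the same structure as for the death term but with $K_0^{-1}b$ in place of $K_0^{-1}d$ and an extra multiplicative factor $(\alpha C)^{-1}$.

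Now the key observation: since $\nu\geq 1$ and $\alpha<1/\nu$, we have $\alpha C\leq C$, so in each of the inner integrals the weight $(\alpha C)^{|\eta|}$ is dominated by $C^{|\eta|}$. The hypotheses \eqref{est-d} and \eqref{est-b} therefore control these inner integrals by $a_1 D(\xi)$ and $a_2 D(\xi)$, respectively, yielding
\[
I(G)\leq\Bigl(a_1+\frac{a_2}{\alpha C}\Bigr)\int_{\Ga_0}|G(\xi)|D(\xi)(\alpha C)^{|\xi|}d\la(\xi).
\]
It remains to tame $D(\xi)(\alpha C)^{|\xi|}$. Using \eqref{D-bdd},
\[
D(\xi)=\sum_{x\in\xi}d(x,\xi\setminus x)\leq A|\xi|(1+|\xi|-1)^{N}\nu^{|\xi|-1}=\frac{A}{\nu}|\xi|^{N+1}\nu^{|\xi|},
\]
so $D(\xi)(\alpha C)^{|\xi|}\leq\frac{A}{\nu}|\xi|^{N+1}(\alpha\nu)^{|\xi|}C^{|\xi|}$. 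Because $\alpha\nu\in(0,1)$, the constant $M:=\sup_{n\geq 0}n^{N+1}(\alpha\nu)^{n}$ is finite, which gives $I(G)\leq \frac{AM}{\nu}\bigl(a_1+\frac{a_2}{\alpha C}\bigr)\|G\|_C$ and hence $k\in\Dom(\hat{L}^{\ast})$. The one place requiring care is the bookkeeping of the weights during the two applications of Lemma~\ref{Minlos} and the change of variables in the birth term; the strict inequality $\alpha\nu<1$ is then exactly what is needed to absorb the polynomial-exponential growth of $D$ provided by \eqref{D-bdd}.
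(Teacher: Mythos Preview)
Your proof is correct and uses the same core ingredients as the paper, but it is organised dually. The paper invokes an explicit formula for $\hat L^{\ast}k$ (taken from \cite{FKO2009}) and then verifies the pointwise bound $C^{-|\eta|}\bigl|(\hat L^{\ast}k)(\eta)\bigr|\leq \mathrm{const}$, i.e.\ $\hat L^{\ast}k\in\K_C$; you instead bound the functional $G\mapsto\langle\!\langle \hat L G,k\rangle\!\rangle$ directly, which amounts to showing that $\hat L$ maps $\D$ into $\mathcal{L}_{\alpha C}$ with $\|\hat L G\|_{\alpha C}\leq \mathrm{const}\cdot\|G\|_C$. In both cases the work is done by Lemma~\ref{Minlos}, the estimates \eqref{est-d}--\eqref{est-b} (with the inner weight $(\alpha C)^{|\eta|}$ dominated by $C^{|\eta|}$ since $\alpha<1$), and the growth bound \eqref{D-bdd} together with $\alpha\nu<1$ to control $D(\xi)(\alpha C)^{|\xi|}$ by a constant times $C^{|\xi|}$. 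The paper's route has the advantage of producing the explicit expression for $\hat L^{\ast}k$, which is reused later (cf.\ Subsection~\ref{subsect-evol-se} and the formulas \eqref{oper_adj_eps}, \eqref{oper_adj_V}); your route is slightly more economical if only the domain inclusion \eqref{incldom} is needed.
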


\begin{proof}
In order to show \eqref{incldom} it is enough to verify that for any
$k\in\K_{\alpha C}$ there exists $k^*\in\K_C$ such that for any
$G\in\Dom(\hat{L})$
\begin{equation}\label{pairdualneed}
\bigl\langle \!\bigl\langle \hat{L} G,\,k\bigr\rangle \!\bigr\rangle
=\left\langle \!\left\langle G,\,k^*\right\rangle \!\right\rangle.
\end{equation}

According to \cite{FKO2009}, \eqref{pairdualneed} is valid for any
$k\in\K_{\alpha C}$ with $k^*=\hat{L}^{\ast }k$, where
\begin{align*}
(\hat{L}^{\ast }k)(\eta )=&-\int_{\Ga _{0}} k(\zeta
\cup \eta )\sum_{x\in \eta }\sum_{\xi \subset \eta \setminus
x}D_{x}(\zeta
\cup \xi )d\la (\zeta ) \\
& +\int_{\Ga _{0}}\sum_{x\in \eta }k(\zeta \cup (\eta \setminus
x))\sum_{\xi \subset \eta \setminus x}B_{x}(\zeta \cup \xi )d\la
(\zeta ),
\end{align*}
provided $k^*\in\K_C$. Using \eqref{K-1-K}, one can rewrite the last
expression
\begin{align*}
(\hat{L}^{\ast }k)(\eta )=&-\sum_{x\in \eta }\int_{\Ga _{0}} k(\zeta
\cup \eta )\bigl( K_0^{-1}d(x,\cdot\cup \eta\setminus x)\bigr)
(\zeta)d\la (\zeta ) \\
& +\sum_{x\in \eta }\int_{\Ga _{0}}k(\zeta \cup (\eta \setminus
x))\bigl( K_0^{-1}b(x,\cdot\cup \eta\setminus x)\bigr) (\zeta)d\la
(\zeta).
\end{align*}
Then, by \eqref{est-d}, \eqref{est-b}, and \eqref{D-bdd},
\begin{align*}
&C^{-\left\vert \eta \right\vert }\left\vert (\hat{L}^{\ast }k)(\eta
)\right\vert \\
\leq &C^{-\left\vert \eta \right\vert }\sum_{x\in \eta }\int_{\Ga
_{0}}\!\left\vert k(\zeta \cup \eta )\right\vert \bigl|
K_0^{-1}d(x,\cdot\cup \eta\setminus x)\bigr|
(\zeta) d\la (\zeta ) \\
&+C^{-\left\vert \eta \right\vert }\sum_{x\in \eta }\int_{\Ga
_{0}}\!\,\left\vert k(\zeta \cup (\eta \setminus x))\right\vert
\bigl| K_0^{-1}b(x,\cdot\cup \eta\setminus x)\bigr| (\zeta)d\la
(\zeta
)\, \\
\leq &\left\Vert k\right\Vert _{\K_{\alpha C}}\alpha ^{\left\vert
\eta \right\vert }\sum_{x\in \eta }\int_{\Ga _{0}}\!\left( \alpha
C\right) ^{\left\vert \zeta \right\vert} \bigl|
K_0^{-1}d(x,\cdot\cup \eta\setminus x)\bigr|
(\zeta)d\la (\zeta ) \\
&+\frac{1}{\alpha C}\left\Vert k\right\Vert _{\K_{\alpha C}}\alpha
^{\left\vert \eta \right\vert }\sum_{x\in \eta }\int_{\Ga
_{0}}\!\,\left( \alpha C\right) ^{\left\vert \zeta \right\vert }
\bigl| K_0^{-1}b(x,\cdot\cup \eta\setminus x)\bigr|
(\zeta) d\la (\zeta ) \\
\leq &\,\left\Vert k\right\Vert _{\K_{\alpha C}}\left( a_1
+\frac{a_2}{\alpha C}\right)\alpha ^{\left\vert \eta \right\vert
}\sum_{x\in \eta
} d\left( x,\eta \setminus x\right) \\
\leq &\,A\left\Vert k\right\Vert _{\K_{\alpha C}}\left( a_1
+\frac{a_2}{\alpha C}\right)\alpha ^{\left\vert \eta \right\vert
}(1+\left\vert \eta \right\vert) ^{N+1}\nu^{\left\vert \eta
\right\vert -1}.
\end{align*} Using elementary inequality \begin{equation}
(1+t)^{b}a^{t}\leq \frac{1}{a}\left( \frac{b}{-e\ln a}\right)
^{b},~~b\geq 1,~a\in \left( 0;1\right) ,~t\geq 0, \label{bdd}
\end{equation} we have for $\alpha \nu <1$ \begin{equation*}
\esssup_{\eta \in \Ga _{0}}C^{-\left\vert \eta \right\vert
}\left\vert (\hat{L}^{\ast }k)(\eta )\right\vert \leq \left\Vert
k\right\Vert _{\K_{\alpha C}}\left( a_1 +\frac{a_2}{\alpha
C}\right)\frac{A }{ \alpha \nu^2}\left( \frac{N+1}{-e\ln \left(
\alpha \nu\right) }\right) ^{N+1}<\infty. \qedhere
\end{equation*}
\end{proof}

\begin{lemma}
Let \eqref{D-bdd} holds. We define
for any $\alpha\in (0;1)$
\begin{equation*}
\D_{\alpha }:\mathcal{=}\left\{ G\in \mathcal{L}_{\alpha
C}~|~D\left( \cdot \right) G\in \mathcal{L}_{\alpha C}\right\} .
\end{equation*} Then for any $\alpha\in (0;\frac{1}{\nu})$ \begin{equation}
\D\subset \mathcal{L}_{C}\subset \D_{\alpha }\subset
\mathcal{L}_{\alpha C} \label{subs}
\end{equation}
\end{lemma}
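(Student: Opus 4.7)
The plan is to split \eqref{subs} into three inclusions and observe that the outer two are purely definitional. Indeed, $\D\subset\mathcal{L}_C$ is built into the definition \eqref{dom} of $\D$, and analogously $\D_\alpha\subset\mathcal{L}_{\alpha C}$ holds by the definition of $\D_\alpha$ given in the statement. So all the work concerns the middle inclusion $\mathcal{L}_C\subset\D_\alpha$.

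For this middle inclusion I would first note that $\alpha\in(0,\frac1\nu)\subset(0,1)$ (since $\nu\geq 1$), so $(\alpha C)^{|\eta|}\leq C^{|\eta|}$ pointwise, which gives $\|G\|_{\alpha C}\leq\|G\|_C$ and hence $\mathcal{L}_C\subset\mathcal{L}_{\alpha C}$. It remains to show $D(\cdot)G\in\mathcal{L}_{\alpha C}$ for every $G\in\mathcal{L}_C$.

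Here is the main step. By \eqref{mult} and \eqref{D-bdd}, for $\eta\in\Ga_0$ with $|\eta|\geq 1$,
\[
D(\eta)=\sum_{x\in\eta}d(x,\eta\setminus x)\leq A|\eta|\bigl(1+|\eta|-1\bigr)^{N}\nu^{|\eta|-1}\leq \frac{A}{\nu}\bigl(1+|\eta|\bigr)^{N+1}\nu^{|\eta|},
\]
so multiplying by $(\alpha C)^{|\eta|}$ and rearranging,
\[
D(\eta)(\alpha C)^{|\eta|}\leq \frac{A}{\nu}\bigl(1+|\eta|\bigr)^{N+1}(\alpha\nu)^{|\eta|}\,C^{|\eta|}.
\]
Now I would apply the elementary inequality \eqref{bdd} with $a=\alpha\nu\in(0,1)$ (this is exactly where the hypothesis $\alpha<1/\nu$ enters), $b=N+1\geq 1$, and $t=|\eta|$, obtaining a uniform constant
\[
K:=\frac{1}{\alpha\nu}\left(\frac{N+1}{-e\ln(\alpha\nu)}\right)^{N+1}
\]
with $\bigl(1+|\eta|\bigr)^{N+1}(\alpha\nu)^{|\eta|}\leq K$. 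Integrating against $\la$ then yields
\[
\|D(\cdot)G\|_{\alpha C}\leq \frac{AK}{\nu}\,\|G\|_C<\infty,
\]
so $G\in\D_\alpha$, completing the middle inclusion.

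There is no real obstacle: the argument is a direct computation. The only point requiring a bit of care is the polynomial bookkeeping after bounding $D(\eta)$, so that the base of the geometric factor becomes exactly $\alpha\nu<1$ and \eqref{bdd} can be invoked; the rest is routine.
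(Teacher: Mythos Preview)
Your proof is correct and follows essentially the same approach as the paper: both dispose of the outer inclusions as definitional and prove the middle one by combining \eqref{mult}, \eqref{D-bdd}, and the elementary inequality \eqref{bdd} with $a=\alpha\nu\in(0,1)$. Your write-up is in fact slightly more explicit than the paper's, spelling out the constant and the polynomial bookkeeping that the paper absorbs into ``$\mathrm{const}$''.
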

\begin{proof}
The first and last inclusions are obvious. To prove the second one,
we use \eqref{D-bdd}, \eqref{bdd} and obtain for any $G\in
\mathcal{L}_{C}$
\begin{align*}
\int_{\Ga _{0}}D\left( \eta \right)
\left\vert G\left( \eta \right) \right\vert \left( \alpha C\right)
^{\left\vert \eta \right\vert }d\la \left( \eta \right) \leq
&\int_{\Ga _{0}}\alpha ^{\left\vert \eta \right\vert }\sum_{x\in
\eta }A(1+\left\vert \eta \right\vert) ^{N}\nu^{\left\vert \eta
\right\vert -1}\left\vert G\left( \eta \right) \right\vert
C^{\left\vert \eta \right\vert }d\la \left( \eta \right)
\\\leq &\,\mathrm{const}\int_{\Ga _{0}}\left\vert G\left(
\eta \right) \right\vert C^{\left\vert \eta \right\vert }d\la \left(
\eta \right) <\infty. \qedhere
\end{align*}
\end{proof}

\begin{proposition}
Let \eqref{est-d}, \eqref{est-b}, and \eqref{D-bdd} hold with
\begin{equation}\label{asmallalpha}
a_1+\frac{a_2}{\alpha C}<\frac{3}{2}
\end{equation}
for some $\alpha\in (0;1)$. Then $(\hat{L},\D_\alpha)$ is
a~generator of a~holomorphic semigroup $\hat{T}_\alpha\left(
t\right) $ on $\mathcal{L}_{\alpha C}$.
\end{proposition}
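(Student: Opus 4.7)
The plan is to reduce this statement to a verbatim application of Theorem~\ref{th1} with the weight $C$ replaced by $\alpha C$. What needs to be checked is that (i) the hypotheses of Theorem~\ref{th1} transfer from $\mathcal{L}_C$ to $\mathcal{L}_{\alpha C}$, (ii) the domain $\D_\alpha$ is densely defined in $\mathcal{L}_{\alpha C}$, and (iii) the proof itself is insensitive to whether the weight exceeds $1$.

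\textbf{Transfer of the integrability estimates.} Since $\alpha\in(0;1)$, we have the pointwise bound $(\alpha C)^{|\eta|}\leq C^{|\eta|}$ for every $\eta\in\Ga_0$. Hence the hypotheses \eqref{est-d}, \eqref{est-b} immediately imply
\begin{align*}
\sum_{x\in\xi}\int_{\Ga_0}\bigl|K_0^{-1}d(x,\cdot\cup\xi\setminus x)\bigr|(\eta)(\alpha C)^{|\eta|}d\la(\eta)&\leq a_1 D(\xi),\\
\sum_{x\in\xi}\int_{\Ga_0}\bigl|K_0^{-1}b(x,\cdot\cup\xi\setminus x)\bigr|(\eta)(\alpha C)^{|\eta|}d\la(\eta)&\leq a_2 D(\xi),
\end{align*}
with the same constants $a_1,a_2$. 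The standing assumption $a_1+a_2/(\alpha C)<3/2$ is then exactly the analog of \eqref{asmall} for the weight $\alpha C$.

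\textbf{Density of $\D_\alpha$ in $\mathcal{L}_{\alpha C}$.} This is the one point where the extra hypothesis \eqref{D-bdd} is needed. The preceding lemma shows that \eqref{D-bdd} implies $\mathcal{L}_C\subset\D_\alpha$. Since $B_{\mathrm{bs}}(\Ga_0)\subset\mathcal{L}_C$ and $B_{\mathrm{bs}}(\Ga_0)$ is dense in $\mathcal{L}_{\alpha C}$ (by the same argument as for $\mathcal{L}_C$), we conclude that $\D_\alpha$ is dense in $\mathcal{L}_{\alpha C}$.

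\textbf{Invocation of Theorem~\ref{th1}.} With density established and the estimates in force at weight $\alpha C$, I simply rerun the proof of Theorem~\ref{th1} in $\mathcal{L}_{\alpha C}$. Lemma~\ref{lem1}, which only uses the multiplication structure of $L_0$ and the nonnegativity of $D(\eta)$, places $L_0\in\mathcal{H}_{\alpha C}(\omega)$ for every $\omega\in(0,\pi/2)$ with the same resolvent bound. Lemma~\ref{lem2} applied at weight $\alpha C$ yields the relative bounds
\[
\|L_1 R(z,L_0)\|\leq a_1-1+\frac{a_2}{\alpha C}\quad(\Re z>0),\qquad \|L_1 G\|\leq\Bigl(a_1-1+\frac{a_2}{\alpha C}\Bigr)\|L_0 G\|
\]
for $G\in\D_\alpha$. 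Setting $\theta:=a_1+a_2/(\alpha C)-1\in(0,1/2)$ and choosing $\omega\in(0,\pi/2)$ with $\cos\omega<\theta/(1-\theta)$ allows the final perturbation step via \cite[Theorem III.2.10]{EN2000} to go through word for word, delivering the desired holomorphic semigroup $\hat{T}_\alpha(t)$ generated by $(\hat{L},\D_\alpha)$ on $\mathcal{L}_{\alpha C}$.

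\textbf{Main point to verify.} There is no substantive obstacle; the bulk of the work is the bookkeeping observation that the proof of Theorem~\ref{th1} never used $C>1$, only the positivity of $C$ and the bounds \eqref{est-d}, \eqref{est-b}, \eqref{asmall} at that weight. The only genuinely new ingredient is the use of \eqref{D-bdd} to secure density of $\D_\alpha$ in the smaller-weight space.
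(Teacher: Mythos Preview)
Your proposal is correct and follows essentially the same approach as the paper, which simply notes that the proof of Theorem~\ref{th1} goes through with $C$ replaced by $\alpha C$ because \eqref{est-d}, \eqref{est-b} trivially imply their analogs at the smaller weight. One minor remark: your density argument via \eqref{D-bdd} and the preceding lemma is valid but unnecessary, since $B_{\mathrm{bs}}(\Ga_0)\subset\D_\alpha$ holds directly (by the same local integrability reasoning that gives $B_{\mathrm{bs}}(\Ga_0)\subset\D$), so \eqref{D-bdd} plays no essential role in this particular proposition.
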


\begin{proof}
The proof is similar to the proof of Theorem~\ref{th1}, taking into
account that bounds \eqref{est-b}, \eqref{est-d} imply the same
bounds for $\alpha C$ instead of $C$. Note also that
\eqref{asmallalpha} is stronger than \eqref{asmall}.
\end{proof}

Under conditions \eqref{est-d}, \eqref{est-b}, \eqref{asmallalpha},
and \eqref{D-bdd}, we consider the adjoint semigroup
$\hat{T}^{\prime }(t)$ in $(\mathcal{L}_{C})^{\prime }$ and its
image $\hat{T}^{\ast }(t)$ in ${\K}_{C}$.
By~e.g.~\cite[Subsection~II.2.6]{EN2000}, the restriction
$\hat{T}^{\odot }(t)$ of the semigroup $\hat{T}^{\ast }(t)$ onto its
invariant Banach subspace $\overline{ \Dom({\hat{L}}^{\ast })}$
(here and below all closures are in the norm of the space
${\K}_{C}$) is a~strongly continuous semigroup. Moreover, its
generator ${\hat{L}} ^\odot$ will be a~part of ${\hat{L}}^\ast$,
namely,
\[
\Dom({\hat{L}} ^\odot)=\Bigl\{k\in \Dom({\hat{L}}^\ast) \Bigm|
{\hat{L}}^\ast k\in \overline{\Dom({\hat{L}}^\ast)}\Bigr\}
\]
and ${\hat{L} }^\ast k ={\hat{L}}^\odot k$ for any $k\in
\Dom({\hat{L}}^\odot)$.

\begin{theorem}\label{hint}
Let \eqref{est-d}, \eqref{est-b}, and \eqref{D-bdd} hold with
\begin{equation}\label{nusmall}
1\leq\nu<\frac{C}{a_2}\biggl(\frac{3}{2}-a_1\biggr).
\end{equation}
Then for any $\alpha \in \left(
\dfrac{a_{2}}{C\bigl(\frac{3}{2}-a_{1}\bigr)};\dfrac{1}{\nu}\right)
$ the set $\overline{ \K_{\alpha C}}$ is a~$\hat{T}^{\odot
}(t)$-invariant Banach subspace of $\K_{C}$.
\end{theorem}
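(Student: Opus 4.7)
The plan is to exploit the holomorphic semigroup $\hat T_\alpha(t)$ on $\L_{\alpha C}$ provided by the preceding proposition, show that $\hat T_\alpha(t)$ and $\hat T(t)$ coincide on their common ``small'' space $\L_C$, and then transfer $\K_{\alpha C}$-invariance to $\K_C$ by duality. The chosen range of $\alpha$ is engineered so that every previous result applies at once: the lower bound $\alpha>a_2/\bigl(C(\frac{3}{2}-a_1)\bigr)$ is exactly \eqref{asmallalpha}, so the preceding proposition delivers a holomorphic $C_0$-semigroup $\hat T_\alpha(t)$ on $\L_{\alpha C}$ generated by $(\hat L,\D_\alpha)$; the upper bound $\alpha<\frac{1}{\nu}$, combined with \eqref{D-bdd}, puts us into Proposition~\ref{pr3} and yields $\K_{\alpha C}\subset\Dom(\hat L^*)\subset\overline{\Dom(\hat L^*)}$. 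Since $\nu\geq 1$ forces $\alpha<1$, one also obtains the continuous embedding $\L_C\hookrightarrow\L_{\alpha C}$ with $\|G\|_{\alpha C}\leq\|G\|_C$, simply because $\alpha^{|\eta|}\leq 1$ on $\Ga_0$.

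The decisive step is the identity $\hat T(t)G=\hat T_\alpha(t)G$ for every $G\in\L_C$ and $t\geq 0$. The operators $(\hat L,\D)$ and $(\hat L,\D_\alpha)$ are given by the same formula \eqref{newexpr} and share the common core $B_{\mathrm{bs}}(\Ga_0)\subset\D\cap\D_\alpha$, which is dense in both spaces. For $G\in\D$ the orbit $t\mapsto\hat T(t)G$ stays in $\D\subset\D_\alpha$ and solves $\frac{d}{dt}u=\hat L u$ classically in $\L_C$, hence via the embedding also classically in $\L_{\alpha C}$; uniqueness of the Cauchy problem for the $C_0$-generator $(\hat L,\D_\alpha)$ then forces $\hat T(t)G=\hat T_\alpha(t)G$ in $\L_{\alpha C}$. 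Density of $\D$ in $\L_C$ and boundedness of both semigroups as maps into $\L_{\alpha C}$ extend this identity to all $G\in\L_C$.

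The conclusion is then a short duality argument. For $k\in\K_{\alpha C}\subset\K_C$ and $G\in\L_C$, both pairings $\langle\!\langle\cdot,\cdot\rangle\!\rangle$ collapse to the same integral $\int_{\Ga_0} G\,k\,d\la$, so
\[
\langle\!\langle G,\hat T^*(t)k\rangle\!\rangle
=\langle\!\langle \hat T(t)G,k\rangle\!\rangle
=\langle\!\langle \hat T_\alpha(t)G,k\rangle\!\rangle
=\langle\!\langle G,\hat T_\alpha^*(t)k\rangle\!\rangle,
\]
while $\hat T_\alpha^*(t)k\in\K_{\alpha C}$ because $\hat T_\alpha^*(t)$ is bounded on $(\L_{\alpha C})'$. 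Since $\K_C=(\L_C)'$ is separated by the pairing with $\L_C$, this forces $\hat T^*(t)k=\hat T_\alpha^*(t)k\in\K_{\alpha C}$, and hence $\hat T^*(t)\K_{\alpha C}\subset\K_{\alpha C}$. Combined with $\K_{\alpha C}\subset\overline{\Dom(\hat L^*)}$ this gives $\hat T^\odot(t)\K_{\alpha C}\subset\K_{\alpha C}$. Finally, $\hat T^\odot(t)$ is bounded on the closed subspace $\overline{\Dom(\hat L^*)}\subset\K_C$ in the $\K_C$-norm, so the inclusion extends continuously to $\hat T^\odot(t)\overline{\K_{\alpha C}}\subset\overline{\K_{\alpha C}}$.

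The main technical hurdle is the semigroup identification $\hat T(t)=\hat T_\alpha(t)$ on $\L_C$: the subspace $\L_C$ is itself not dense in $\L_{\alpha C}$, so one must work through the common dense core $B_{\mathrm{bs}}(\Ga_0)$ while juggling the two norms simultaneously. If the orbit-uniqueness route feels awkward, an equivalent and perhaps cleaner path is to verify that the resolvents of $(\hat L,\D)$ and $(\hat L,\D_\alpha)$ agree on $\L_C$ for all sufficiently large $\la$ (both deliver a solution of $(\la\1-\hat L)u=G$, and $\la\1-\hat L$ is injective on $\D_\alpha$) and then to recover the two semigroups from their resolvents via the Hille exponential formula.
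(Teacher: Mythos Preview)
Your proof is correct and follows the same architecture as the paper's: identify $\hat T(t)$ with $\hat T_\alpha(t)$ on $\L_C$, then pass to the duals to get $\hat T^*(t)\K_{\alpha C}\subset\K_{\alpha C}$, and close up. The paper carries out the semigroup identification via exactly the resolvent argument you sketch at the end (writing $R(z,\hat L)G-R(z,\hat L_\alpha)G=R(z,\hat L_\alpha)\bigl((z\1-\hat L_\alpha)-(z\1-\hat L)\bigr)R(z,\hat L)G=0$ since $\hat L_\alpha=\hat L$ on $\D$) rather than via orbit-uniqueness; note also that your aside that $\L_C$ is not dense in $\L_{\alpha C}$ is mistaken, since $B_{\mathrm{bs}}(\Ga_0)\subset\L_C$ is already dense in $\L_{\alpha C}$, but this plays no role in your argument.
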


\begin{proof} First of all note that the
condition on $\alpha$ implies \eqref{asmallalpha}. Next, we prove
that $\hat{T}_{\alpha }\left( t\right) G=\hat{T} \left( t\right) G$
for any $G\in \mathcal{L}_{C}\subset \mathcal{L}_{\alpha C}$. Let
$\hat{L}_\alpha=(\hat{L},\D_{\alpha })$ is the operator in
$\mathcal{L}_{\alpha C}$. There exists $\omega>0$ such that
$(\omega;+\infty)\subset\rho(\hat{L})\cap\rho(\hat{L}_\alpha)$, see
e.g. \cite[Section III.2]{EN2000}. For some fixed $z
\in(\omega;+\infty)$ we denote by $R(z,\hat{L})=\bigl( z\1-
\hat{L}\bigr) ^{-1}$ the resolvent of $(\hat{L},\D)$ in
$\mathcal{L}_{C}$ and by $R(z,\hat{L}_\alpha)=\bigl(
z\1-\hat{L}_\alpha \bigr) ^{-1} $ the resolvent of $\hat{L}_\alpha$
in $\mathcal{L}_{\alpha C}$. Then for any $G\in \mathcal{L}_{C}$ we
have $R(z,\hat{L})G\in \D\subset \D_{\alpha }$ and \begin{equation*}
R(z,\hat{L})G-R(z,\hat{L}_\alpha)G=R(z,\hat{L}_\alpha)\bigl( (z
\1-\hat{L}_\alpha) -\bigl( z\1-\hat{L}) \bigr)R(z,\hat{L})G=0,
\end{equation*}
since $\hat{L}_\alpha=\hat{L}$ on $\D$. As a~result,
$\hat{T}_{\alpha }\left( t\right) G=\hat{T}\left( t\right) G$ on
$\mathcal{L}_C$.

Note that for any $G\in \mathcal{L}_{C}\subset \mathcal{L}_{\alpha
C}$ and for any $k\in {\K}_{\alpha C}\subset {\K}_{C}$ we have $
\hat{T}_{\alpha }(t)G\in \mathcal{L}_{\alpha C}$ and
\begin{equation*}
\left\langle \!\!\left\langle \hat{T}_{\alpha }(t)G,k\right\rangle
\!\!\right\rangle =\left\langle \!\!\left\langle G,\hat{T}_{\alpha
}^{\ast }(t)k\right\rangle \!\!\right\rangle ,
\end{equation*} where, by the same construction as before, $\hat{T}_{\alpha }^{\ast
}(t)k\in {\K}_{\alpha C}$. But $G\in \mathcal{L}_{C}$, $k\in {\K}
_{C}$ implies
\begin{equation*}
\left\langle \!\!\left\langle \hat{T}_{\alpha }(t)G,k\right\rangle
\!\!\right\rangle =\left\langle \!\!\left\langle
\hat{T}(t)G,k\right\rangle \!\!\right\rangle =\left\langle
\!\!\left\langle G,\hat{T}^{\ast }(t)k\right\rangle
\!\!\right\rangle .
\end{equation*} Hence, $\hat{T}^{\ast }(t)k=\hat{T}_{\alpha }^{\ast }(t)k\in {\K} _{\alpha C}$ that proves the statement due to continuity of the family $\hat{ T}^{\ast }(t)$.
\end{proof}

Therefore, one can consider the restriction $\hat{T}^{\odot \alpha}$
of the semigroup $\hat{T}^{\odot}$ onto $\overline{\K_{\alpha C}}$.
It will be strongly continuous semigroup with the generator
$\hat{L}^{\odot \alpha}$ which is a~restriction of ${\hat{L}}^\odot$
onto $\overline{\K_{\alpha C}}$ (see e.g. \cite[Subsection
II.2.3]{EN2000}). Hence, we have the strong solution (in the sense
of the norm in $\K_C$) to the evolution equation \eqref{ssd1} on the
linear subspace $\K_{\alpha C}$.

\begin{remark}
Let us clarify the reasons we avoid a~construction of this evolution
in $\K_C$ directly, via e.g. perturbation techniques. First of all
$(L_0,\K_{\alpha C})$ is not closed operator neither in $\K_C$ nor
in $\overline{\K_{\alpha C}}$. To make it closed, one can consider
the operator $L_0$ in $\K_C$ on its maximal domain
$\D^\ast:=\{G\in\K_C|DG\in\K_C\}$. However, this domain is not dense
in $\K_C$. Under condition of Proposition~\ref{pr3} one can show
that $\K_{\alpha C}\subset \D^\ast$, but it is not clear whether
$\D^\ast\subset \overline{\K_{\alpha C}}$. Therefore, we are not
able to work in the space $\overline{\K_{\alpha C}}$, staying on the
operator-dependent space $\overline{\D^\ast}$. Suppose one can prove
estimate like \eqref{relbound}. Then one can show that
$(\hat{L}^\ast, \D^\ast)$ will be a~generator of a~$C_0$-semigroup
$W(t)$ on $\overline{\D^\ast }$. Even in this case it seems to be
very difficult to show that this semigroup will be $\K_{\alpha
C}$-invariant.
%Indeed, the inclusions like \eqref{subs} now have the
%following form: let $\D^\ast_\alpha:=\{G\in\K_{\alpha
%C}|DG\in\K_{\alpha C}\}$, this domain is, clearly, not dense in
%$\K_{\alpha C}$; and consider the closure
%$\overline{\D^\ast_\alpha}^{\K_{\alpha C}}$ of this domain in the
%norm of $\K_{\alpha C}$; then
%\[
%\D^\ast_\alpha\subset\overline{\D^\ast_\alpha}^{\K_{\alpha
%C}}\subset \K_{\alpha C}\subset \D^\ast\subset\overline{\D^\ast }
%\]
%(recall again that the last closure is in the norm of $\K_C$). If
%condition \eqref{nusmall} holds then one show that $(\hat{L}^\ast,
%\D^\ast_\alpha)$ will be generator of a~semigroup $V_\alpha(t)$ on
%$\overline{\D^\ast_\alpha}^{\K_{\alpha C}}$. Using the same trick as
%in the proof of Theorem~\ref{hint} we may show that
%$W(t)=W_\alpha(t)$ on $\overline{\D^\ast_\alpha}^{\K_{\alpha C}}$.
%As a~result, the space $\overline{\D^\ast_\alpha}^{\K_{\alpha C}}$
%is $W(t)$-invariant. However, the space $\K_{\alpha C}$ is (much)
%bigger.
\end{remark}

\setcounter{example}{0}
\begin{example}[revisited]
To apply Theorem~\ref{hint} to Example~\ref{ex-Gl} it is enough to
check \eqref{D-bdd} and \eqref{nusmall}. One has
\[
d(x,\xi)=\exp\Bigl\{s\sum_{y\in\xi}\phi(x-y)\Bigr\}\leq
\nu^{|\xi|},
\]
where $\nu=1$ for $s=0$ and $\nu=e^{s\bar{\phi}}\geq1$,
$\bar{\phi}=\max\limits_{x\in\X}\phi(x)$ for $s\in (0;1]$ provided
$\phi$ is bounded on $\X$. If $s=0$ then \eqref{nusmall} is true
(whenever condition \eqref{s0smallz} is satisfied). For the bounded
$\phi$ and $s\in(0;1]$ one may rewrite \eqref{nusmall} in the
following form:
\begin{equation}\label{strongersn0smallz}
e^{C\beta_{s}} + \frac{z}{C}e^{s\bar{\phi}+C\beta_{s-1}}<\frac{3}{2}.
\end{equation}
Note, that \eqref{strongersn0smallz} is the stronger version of
condition \eqref{sn0smallz}.
\end{example}

\begin{example}[revisited]
According to \eqref{smallparBDLP-1}--\eqref{smallparBDLP-2},
\begin{align*}
d(x,\xi)&=m+\varkappa^-\sum_{y\in\xi}a^-(x-y)\leq
m+A^-\varkappa^-|\xi|\\&<m+A^-\frac{m}{4C}|\xi|<m\Bigl(1+\frac{A^-}{4C}\Bigr)(1+|\xi|),
\end{align*}
where $A^-=\|a^-\|_{L^\infty(\X)}$. Therefore, \eqref{D-bdd} holds
with $\nu=1$, which makes \eqref{nusmall} obvious.
\end{example}

\subsection{Stationary equation}\label{subsect-evol-se}
In this subsection we study the question about stationary solutions
to \eqref{ssd1}. For any $s\ge0$, we consider the following subset
of $\K_C$
\[
\K_{\alpha C}^{(s)}:=\bigl\{ k\in \K_{\alpha
C}\bigm| k(\emptyset)=s\bigr\}.
\]
We define $\widetilde{\K}$ to be the closure of $\K_{\alpha
C}^{(0)}$ in the norm of $\K_C$. It is clear that $\widetilde{\K}$
with the norm of $\K_C$ is a~Banach space.

\begin{proposition}
Let \eqref{est-d}, \eqref{est-b}, and \eqref{D-bdd} be satisfied
with
\begin{equation}\label{statior-est}
a_1+\frac{a_2}{C}<2.
\end{equation}
Assume, additionally, that
\begin{equation}\label{nonezerodeath}
  d(x,\emptyset)>0, \quad x\in\X.
\end{equation}
Then for any $\alpha\in(0;\frac{1}{\nu})$ the stationary equation
\begin{equation}\label{stationaryeqn}
\hat{L}^\ast k=0
\end{equation}
has a~unique solution $k_\inv$ from $\K_{\alpha C}^{(1)}$ which is
given by the expression
\begin{equation}\label{invsol}
k_\inv=1^*+(\1-S)^{-1}E.
\end{equation}
Here $1^\ast$ denotes the function defined by
$1^\ast(\eta)=0^{|\eta|}$, $\eta\in\Ga_0$, the function
$E\in\K_{\alpha C}^{(0)}$ is such that
\[
E(\eta)=\1_{\Ga^{(1)}}(\eta)\sum_{x\in\eta}\frac{b(x,\emptyset)}{d(x,\emptyset)},
\quad \eta\in\Ga_0,
\]
and $S$ is a~generalized Kirkwood--Salzburg operator on
$\tilde{\K}$, given by
\begin{align}
\left( Sk\right) \left( \eta \right) =&-\frac{1}{D\left( \eta
\right) } \sum_{x\in \eta }\int_{\Ga _{0}\setminus \left\{ \emptyset
\right\} }k(\zeta \cup \eta )(K_{0}^{-1}d(x,\cdot \cup \eta
\setminus x))(\zeta
)d\la (\zeta ) \label{KSoper}\\
&+\frac{1}{D\left( \eta \right) }\sum_{x\in \eta }\int_{\Ga
_{0}}k(\zeta \cup (\eta \setminus x))(K_{0}^{-1}b(x,\cdot \cup \eta
\setminus x))(\zeta )d\la (\zeta ),\notag
\end{align}
for $\eta \neq \emptyset$ and $\left( Sk\right) \left( \emptyset
\right) =0$. In particular, if $b(x,\emptyset)=0$ for a.a. $x\in\X$
then this solution is such that
\begin{equation}\label{deltasol}
k_\inv^{(n)}=0, \qquad n\geq 1.
\end{equation}
\end{proposition}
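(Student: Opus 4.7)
The plan is to recast $\hat{L}^\ast k = 0$ as a fixed-point equation $g = Sg + E$ in $\widetilde{\K}$ and invoke the Banach contraction principle. I would first expand $(\hat{L}^\ast k)(\eta)$ using the formula from the proof of Proposition~\ref{pr3}. At $\eta = \emptyset$ both sums over $x \in \eta$ are empty, so no constraint is imposed. For $\eta \neq \emptyset$, I isolate the $\zeta = \emptyset$ contribution of the death term: since $(K_{0}^{-1}d(x,\cdot \cup \eta \setminus x))(\emptyset) = d(x,\eta \setminus x)$, summing over $x \in \eta$ produces $D(\eta)k(\eta)$. Moving this to the left-hand side and dividing by $D(\eta) > 0$ (by \eqref{nonezerodeath}) converts the stationary equation into $k(\eta) = (Sk)(\eta)$ on $\Ga_0 \setminus \{\emptyset\}$. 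Since $k \in \K_{\alpha C}^{(1)}$ has $k(\emptyset) = 1$, I write $k = 1^* + g$ with $g \in \K_{\alpha C}^{(0)}$ and check $S 1^* = E$: the first term of $S 1^*$ vanishes because $1^*(\zeta \cup \eta) = 0$ for $\eta \neq \emptyset$, while the second is nonzero only when $\zeta = \emptyset$ and $|\eta| = 1$, yielding $b(x,\emptyset)/d(x,\emptyset)$. The equation therefore becomes $g - Sg = E$.

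The central estimate is $\|Sk\|_{\K_C} \leq q \|k\|_{\K_C}$ with $q := a_1 - 1 + a_2/C < 1$ by \eqref{statior-est}. Using $|k(\zeta \cup \eta)| \leq \|k\|_{\K_C} C^{|\zeta|+|\eta|}$, the first term of $Sk$ is bounded by pulling $(\|k\|_{\K_C} C^{|\eta|})/D(\eta)$ outside and applying \eqref{est-d} to $\sum_{x \in \eta}\int_{\Ga_0} C^{|\zeta|}|K_{0}^{-1}d|(\zeta) d\la(\zeta) \leq a_1 D(\eta)$; subtracting the $\zeta = \emptyset$ piece (exactly $D(\eta)$) leaves $(a_1 - 1) D(\eta)$, and the $1/D(\eta)$ prefactor absorbs it. Analogously, the second term is bounded by $(a_2/C)\|k\|_{\K_C} C^{|\eta|}$ via \eqref{est-b}. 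The identical calculation with $\alpha C$ in place of $C$ shows $S: \K_{\alpha C}^{(0)} \to \K_{\alpha C}^{(0)}$; by density, $S$ extends to $\widetilde{\K} \to \widetilde{\K}$ by continuity.

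Setting $\xi = \{x\}$ in \eqref{est-b} and isolating the $\eta = \emptyset$ part of the integrand gives $b(x,\emptyset) \leq a_2 d(x,\emptyset)$, so $|E(\eta)| \leq a_2 \1_{\Ga^{(1)}}(\eta)$ and $E \in \K_{\alpha C}^{(0)} \subset \widetilde{\K}$. The Banach fixed-point theorem then produces a unique $g = (\1 - S)^{-1} E \in \widetilde{\K}$, and $k_\inv = 1^* + g$ solves \eqref{stationaryeqn}. Uniqueness in $\K_{\alpha C}^{(1)}$ is immediate: the difference of two solutions lies in $\K_{\alpha C}^{(0)} \subset \widetilde{\K}$ and satisfies $h = Sh$, which forces $h = 0$ by contraction. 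In the degenerate case $b(x,\emptyset) = 0$ a.e., we have $E \equiv 0$, hence $g = 0$ and $k_\inv = 1^*$, which gives \eqref{deltasol}.

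The principal obstacle is the bookkeeping in the contraction bound: one must cleanly extract the singular $\zeta = \emptyset$ contribution of the death sum in \eqref{KSoper} so that its $1/D(\eta)$ prefactor is absorbed and \eqref{est-d}'s constant $a_1$ is shifted to $a_1 - 1$; without this shift the effective constant would be $a_1 + a_2/C$ and the gap to the contraction threshold $1$ would be lost. A secondary subtlety is verifying that the fixed point produced in $\widetilde{\K}$ in fact lies in $\K_{\alpha C}^{(0)}$ for each $\alpha \in (0, 1/\nu)$, which one obtains by running the Neumann iteration $g_{n+1} = Sg_n + E$ inside the $S$-invariant subspace $\K_{\alpha C}^{(0)}$, starting from $g_0 = 0$, and passing to the $\K_C$-limit.
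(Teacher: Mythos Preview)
Your proposal is correct and follows essentially the same route as the paper: rewrite $\hat L^\ast k=0$ as a fixed-point equation $\tilde k = S\tilde k + E$ for $\tilde k = k - 1^\ast$, then show $\|S\|\le a_1-1+\tfrac{a_2}{C}<1$ in $\widetilde{\K}$ via \eqref{est-d}--\eqref{est-b} and invert $\1-S$. Your explicit verification that $S1^\ast=E$ is just a repackaging of the paper's direct substitution of $k=\tilde k+1^\ast$ into \eqref{rel}; you also flag the $\K_{\alpha C}^{(0)}$ versus $\widetilde{\K}$ membership issue, which the paper does not address (and your Neumann-iteration remark does not fully resolve either, since a $\K_C$-limit of $\K_{\alpha C}^{(0)}$ elements need not remain in $\K_{\alpha C}^{(0)}$).
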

\begin{remark}
It is worth noting that \eqref{est-d1}, \eqref{est-b1} imply
\eqref{nonezerodeath}.
\end{remark}
\begin{proof}
Suppose that \eqref{stationaryeqn} holds
for some $k\in\K_{\alpha C}^{(1)}$. Then
\begin{align}
D\left( \eta \right) k(\eta )=&-\sum_{x\in \eta }\int_{\Ga
_{0}\setminus \left\{ \emptyset \right\} }k(\zeta \cup \eta
)\bigl(K_{0}^{-1}d(x,\cdot \cup
\eta \setminus x)\bigr)(\zeta )d\la (\zeta ) \notag\\
&+\sum_{x\in \eta }\int_{\Ga _{0}}k(\zeta \cup (\eta \setminus
x))\bigl(K_{0}^{-1}b(x,\cdot \cup \eta \setminus x)\bigr)(\zeta
)d\la (\zeta ). \label{rel}
\end{align}
The equality \eqref{rel} is satisfied for any $k\in\K_{\alpha
C}^{(1)}$ at the point $\eta=\emptyset$. Using the fact that
$D(\emptyset)=0$ one may rewrite \eqref{rel} in terms of the
function $\tilde{k}=k-1^*\in\K_{\alpha C}^{(0)}$. Namely,
\begin{align}
D\left( \eta \right) \tilde{k}(\eta )=&-\sum_{x\in \eta }\int_{\Ga
_{0}\setminus \left\{ \emptyset \right\} }\tilde{k}(\zeta \cup \eta
)\bigl(K_{0}^{-1}d(x,\cdot \cup
\eta \setminus x)\bigr)(\zeta )d\la (\zeta ) \notag\\
&+\sum_{x\in \eta }\int_{\Ga _{0}}\tilde{k}(\zeta \cup (\eta
\setminus x))\bigl(K_{0}^{-1}b(x,\cdot \cup \eta \setminus
x)\bigr)(\zeta )d\la (\zeta ).\notag\\ &+\sum_{x\in \eta
}0^{|\eta\setminus x|}b(x,\eta\setminus x).\label{rel1}
\end{align}
As a~result,
\[
\tilde{k}(\eta)=(S\tilde{k})(\eta)+E(\eta),\quad
\eta\in\Ga_0.
\]
Next, for $\eta\neq \emptyset$
\begin{align*}
&C^{-\left\vert \eta \right\vert }\left\vert \left( Sk\right) \left( \eta
\right) \right\vert \\
\leq &\frac{C^{-\left\vert \eta \right\vert }}{D\left( \eta \right)
}\sum_{x\in \eta }\int_{\Ga _{0}\setminus \left\{ \emptyset \right\}
}\left\vert k(\zeta \cup \eta )\right\vert \left\vert
(K_{0}^{-1}d(x,\cdot \cup \eta \setminus x))(\zeta
)\right\vert d\la (\zeta ) \\
&+\frac{C^{-\left\vert \eta \right\vert }}{D\left( \eta \right)
}\sum_{x\in \eta }\int_{\Ga _{0}}k(\zeta \cup (\eta \setminus
x))\left\vert (K_{0}^{-1}b(x,\cdot \cup \eta \setminus
x))(\zeta )\right\vert d\la (\zeta ) \\
\leq &\frac{\left\Vert k\right\Vert _{\K_C}}{D\left( \eta \right)
}\sum_{x\in \eta }\int_{\Ga _{0}\setminus \left\{ \emptyset \right\}
}C^{\left\vert \zeta \right\vert }\left\vert (K_{0}^{-1}d(x,\cdot
\cup \eta \setminus
x))(\zeta )\right\vert d\la (\zeta ) \\
&+\frac{\left\Vert k\right\Vert _{\K_C}}{D\left( \eta \right)
}\frac{1}{C} \sum_{x\in \eta }\int_{\Ga _{0}}C^{\left\vert \zeta
\right\vert }\left\vert (K_{0}^{-1}b(x,\cdot \cup \eta \setminus
x))(\zeta )\right\vert
d\la (\zeta ) \\
\leq &\frac{\left\Vert k\right\Vert _{\K_C}}{D\left( \eta \right) }D\left(
\eta \right) \left( a_{1}-1+\frac{a_{2}}{C}\right)=\left( a_{1}-1+\frac{a_{2}}{C}\right)\|k\| _{\K_C}.
\end{align*}
Hence,
\[
\|S\|=a_{1}+\frac{a_{2}}{C}-1<1
\]
in $\widetilde{K}$. This finishes the proof.
\end{proof}

\begin{remark}
The name of the operator \eqref{KSoper} is motivated by
Example~\ref{ex-Gl}. Namely, if $s=0$ then the operator
\eqref{KSoper} has form
\begin{align*}
\left( Sk\right) \left( \eta \right) =\frac{1}{m| \eta| }\sum_{x\in
\eta }e_{\la}(e^{-\phi(x-\cdot)},\eta \setminus x)\int_{\Ga
_{0}}k(\zeta \cup (\eta \setminus
x))e_{\la}(e^{-\phi(x-\cdot)}-1,\zeta )d\la (\zeta ),\notag
\end{align*}
that is quite similar of the so-called Kirkwood--Salsburg operator
known in mathematical physics (see e.g. \cite{Rue1969, KK2003a}).
For $s=0$ condition \eqref{statior-est} has form
$\frac{z}{C}e^{C\beta_{-1}}<1$ (cf. \eqref{s0smallz}). Under this
condition, the stationary solution to \eqref{stationaryeqn} is a
unique and coincides with the correlation function of the Gibbs
measure, corresponding to potential $\phi$ and activity $z$.
\end{remark}

\begin{remark} It is worth pointing out that $b(x,\emptyset)=0$
in the case of Example~\ref{ex-BDLP}. Therefore, if we suppose (cf.
\eqref{smallparBDLP-1}, \eqref{smallparBDLP-2}) that
$2\varkappa^-C<m$ and $2\varkappa^+a^+(x)\leq C\varkappa^-a^-(x)$,
for $x\in\X$, condition \eqref{statior-est} will be satisfied.
However, the unique solution to \eqref{stationaryeqn} will be given
by \eqref{deltasol}. In the next example we improve this statement.
\end{remark}

\begin{example}
Let us consider the following natural modification of BDLP-model
coming from Example~\ref{ex-BDLP}: let $d$ be given by
\eqref{BDLP-d} and
\begin{equation}
b(x,\ga)=\kappa+\varkappa^+\sum_{y\in\ga}a^+(x-y), \quad x\in\X\setminus\ga,\
\ga\in\Ga,\label{BDLP-b-mod}
\end{equation}
where $\varkappa^+,a^+$ are as before and $\kappa>0$. Then, under
assumptions
\begin{equation}\label{aaa1}
 2\max\Bigl\{\varkappa^-C;\frac{2\kappa }{C}\Bigr\}<m
\end{equation}
and
\begin{equation}\label{aaa2}
2\varkappa^+a^+(x)\leq C\varkappa^-a^-(x), \quad x\in\X,
\end{equation}
we obtain for some $\delta>0$
\begin{align*}
\int_{\Ga _{0}}\left\vert K_{0}^{-1}d\left( x,\cdot \cup \xi \right)
\right\vert \left( \eta \right) C^{\left\vert \eta \right\vert
}d\la \left( \eta \right)&=d(x,\xi)+C\varkappa^-\leq \Bigl(1+\frac{1}{2+\delta}\Bigr)d(x,\xi)\\
\int_{\Ga _{0}}\left\vert K_{0}^{-1}b\left( x,\cdot \cup \xi \right)
\right\vert \left( \eta \right) C^{\left\vert \eta \right\vert }d\la
\left( \eta \right)&=b(x,\xi)+C\varkappa^+
\\&\leq \kappa
+\frac{1}{2}C\varkappa^-\sum_{y\in\xi}a^-(x-y)+\frac{m}{4}C<\frac{C}{2}d(x,\xi).
\end{align*}
The latter inequalities imply \eqref{statior-est}. In this case,
$E(\eta)=\1_{\Ga^{(1)}}(\eta)\frac{\kappa}{m}$.
\end{example}

\begin{remark}
If $a^+(x)=a^-(x)$, $x\in\X$ and $\varkappa^+=z\varkappa^-$,
$\kappa=zm$ for some $z>0$ then $b(x,\ga)=zd(x,\ga)$ and the Poisson
measure $\pi_z$ with the intensity $z$ will be symmetrizing measure
for the operator $L$. In particular, it will be invariant measure.
This fact means that its correlation function $k_z(\eta)=z^{|\eta|}$
is a~solution to \eqref{stationaryeqn}. Conditions \eqref{aaa1} and
\eqref{aaa2} in this case are equivalent to $4z< C$ and
$2\varkappa^-C<m$. As a~result, due to uniqueness of such solution,
\[
1^*(\eta)+z(\1-S)^{-1}\1_{\Ga^{(1)}}(\eta)=z^{|\eta|}, \quad
\eta\in\Ga_0.
\]
\end{remark}

\section{Scalings}\label{sect-VS}
For the reader convenience, we start from the idea of the
Vlasov-type scaling. The general scheme for the birth-and-death
dynamics as well as for the conservative ones may be found in
\cite{FKK2010a}. The realizations of this approach for the Glauber
dynamics (Example 1 with $s=0$) and for the BDLP dynamics (Example
2) were considered in \cite{FKK2010b, FKK2010c}, correspondingly.
The idea of the Vlasov-type scaling consists in the following.

We would like to construct some scaling $L_\eps$, $\eps>0$, of the
generator $L$, such that the following scheme holds. Suppose that we
have a~semigroup $\hat{U}_\eps(t)$ with the generator ${\hat{L}}
_\eps$ in some $\mathcal{L}_{C_\eps}$, $\eps>0$. Consider the dual
semigroup $\hat{U} _\eps^\ast(t)$. Let us choose an initial function
of the corresponding Cauchy problem with a~singularity in $\eps$.
Namely, $\eps^{|\eta|} k_0^{(\eps)}(\eta) \sim r_0(\eta)$,
$\eps\rightarrow 0$, $\eta\in\Ga_0$ for some function $r_0$, which
is independent of $\eps$. The scaling $L\mapsto L_\eps$ should be
chosen in such a~way that first of all the corresponding semigroup
$\hat{U}_\eps^\ast(t)$ preserves the order of the singularity:
\begin{equation}\label{ordersing}
\eps^{|\eta|} (\hat{U}_\eps^\ast(t)k_0^{(\eps)})(\eta) \sim
r_t(\eta), \ \ \eps\rightarrow 0, \ \ \eta\in\Ga_0,
\end{equation}
and, secondly, the dynamics $r_0 \mapsto r_t$ preserves the
Lebesgue--Poisson exponents. Namely, if
$r_0(\eta)=e_\la(\rho_0,\eta)$ then $r_t(\eta)=e_\la(\rho_t,\eta)$.
There exists explicit (nonlinear, in general) differential equation
for $\rho_t$:
\begin{equation}\label{V-eqn-gen}
\dfrac{\partial}{\partial t}\rho_t(x) = \upsilon(\rho_t)(x)
\end{equation}
which will be called the Vlasov-type equation.

Now we explain an informal way to realize such a~scheme. Let us
consider for any $\eps>0$ the following mapping (cf.
\eqref{isometry}) defined for functions on $\Ga_0$
\begin{equation}
(R_\eps r)(\eta):=\eps^{|\eta|} r(\eta).
\end{equation}
This mapping is ``self-dual'' with respect to the duality
\eqref{duality}, moreover, $R_\eps^{-1}=R_{\eps^{-1}}$. Having
$R_{\eps} k^{(\eps)}_0\sim r_0$, $\eps\rightarrow 0$, we need $r_t
\sim R_\eps \hat{U}_\eps^\ast(t)k_0^{(\eps)} \sim R_\eps \hat{U}
_\eps^\ast(t)R_{\eps^{-1}} r_0$, $\eps\rightarrow 0$. Therefore, we
have to show that for any $t\geq 0$ the operator family $R_\eps
\hat{U} _\eps^\ast(t)R_{\eps^{-1}}$, $\eps>0$ has limiting (in a
proper sense) operator $U(t)$ and
\begin{equation}\label{chaospreserving}
U(t)e_\la(\rho_0)=e_\la(\rho_t).
\end{equation}
But, heuristically,
$\hat{U}^\ast_\eps(t)=\exp{\{t{\hat{L}}^\ast_\eps\}}$ and $R_\eps
\hat{U}_\eps^\ast(t)R_{\eps^{-1}}=\exp{\{t R_\eps
{\hat{L}}_\eps^\ast R_{\eps^{-1}} \}}$. Let us consider the
``renormalized'' operator
\begin{equation}\label{renorm_def}
{{\hat{L}}_\ren^\ast} := R_\eps {\hat{L}}_\eps^\ast R_{\eps^{-1}}.
\end{equation}
In fact, we need that there exists an operator ${\hat{L}}_V^\ast$
such that $\exp{\{t R_\eps {\hat{L}}_\eps^\ast R_{\eps^{-1}}
\}}\rightarrow \exp{\{t{\hat{L}}_V^\ast\}=:U(t)}$ satisfying
\eqref{chaospreserving}. Therefore, an heuristic way to produce
scaling $L\mapsto L_\eps$ is to demand that
\[
\lim_{\eps\rightarrow 0}\left(\dfrac{\partial}{\partial
t}e_\la(\rho_t,\eta)-{{\hat{L}}_\ren^\ast}
e_\la(\rho_t,\eta)\right)=0, \qquad \eta\in\Ga_0
\]
provided $\rho_t$ satisfies \eqref{V-eqn-gen}. The point-wise limit
of ${{\hat{L}}_\ren^\ast}$ will be natural candidate for
${\hat{L}}_V^\ast$.

Note that \eqref{renorm_def} implies informally that
${\hat{L}}_\ren=R_{\eps^{-1}}{\hat{L}} _\eps R_\eps$. We propose
below the scheme to give rigorous meaning to the idea introduced
above. We consider, for a~proper scaling $L_\eps$, the
``renormalized'' operator ${\hat{L}}_\ren$ and prove that it is a
generator of a~strongly continuous contraction semigroup
$\hat{U}_\ren(t)$ in $\mathcal{L}_C$. Next, we show that the formal
limit ${\hat{L}}_V$ of ${\hat{L}}_\ren$ is a~generator of a~strongly
continuous contraction semigroup $\hat{U}_V(t)$ in $\mathcal{L}_C$.
Finally, we prove that $\hat{U}_\ren(t)\rightarrow\hat{U}_V(t)$
strongly in $\mathcal{L}_C$. This implies weak*-convergence of the
dual semigroups $\hat{U}_\ren^\ast(t)$ to $\hat{U}_V^\ast(t)$. We
explain also in which sense $\hat{U}_V^\ast(t)$ satisfies the
properties above.

Let us consider for any $\eps\in(0;1]$ the following scaling of
\eqref{BaDGen}
\begin{align}
(L_\eps F)(\ga ):=&\sum_{x\in \ga }d_\eps (x,\ga
\setminus x)\left[F(\ga \setminus x)-F(\ga )\right]\notag\\&+\eps ^
{-1}\int_{\R^{d}}b_\eps (x,\ga )\left[F(\ga \cup x)-F(\ga )\right]
dx, \label{BaDGenScaled}
\end{align}
and define the renormalized operator
$\hat{L}_{\eps,\mathrm{ren}}:=R_{\eps^{-1}}K^{-1}L_\eps KR_\eps$.
Using the same arguments as in the proof of
Proposition~\ref{prop_desc_oper}, we get
 \begin{align}
(\hat{L}_{\eps,\mathrm{ren}}G)(\eta )=&-\sum_{\xi \subset \eta }G(\xi )\eps^{-|\eta\setminus\xi|}\sum_{x\in \xi
}\bigl(K_0^{-1}d_\eps(x,\cdot\cup\xi\setminus x)\bigr)(\eta\setminus
\xi)\notag\\
&+\sum_{\xi \subset \eta }\int_{\R^{d}}\,G(\xi \cup
x)\eps^{-|\eta\setminus\xi|}\bigl(K_0^{
-1}b_\eps(x,\cdot\cup\xi)\bigr)(\eta\setminus \xi)
dx.\label{newexpreps}
\end{align}
Below we generalize slightly the previous introduced notations: for
$\eps\in(0;1]$, $\alpha\in(0;1)$
\begin{align*}
D_\eps(\eta)&:=\sum_{x\in\eta}d_\eps(x,\eta\setminus x);\\
\D^{(\eps)}&:=\bigl\{G\in\mathcal{L}_C\bigm|
D^{(\eps)}(\cdot)G\in\mathcal{L}_C\bigr\};\\
(L_0^{(\eps)}G)(\eta)&:=-
D_\eps(\eta)G(\eta), \qquad G\in\D^{(\eps)};\\
(L_1^{(\eps)}G)(\eta)&:=(\hat{L}_{\eps,\mathrm{ren}}G)(\eta ) -
(L_0^{(\eps)}G)(\eta), \qquad G\in\D^{(\eps)}.
\end{align*}

Suppose that there exists $a_{1}\geq1$, $a_2>0$, $A>0$, $N\in
\N_{0}$, $\nu\geq 1$ such that for all $\xi \in \Ga _{0}$, for a.a.
$x\in\X$, and for any $\eps\in(0;1]$
\begin{align}
\sum_{x\in\xi}\int_{\Ga _{0}}\left\vert K_{0}^{-1}d_\eps \left(
x,\cdot \cup \xi \setminus x\right) \right\vert \left( \eta \right)
\eps^{-|\eta|} C^{\left\vert \eta \right\vert }d\la \left( \eta
\right) \leq & a_{1} D_\eps \left( \xi \right) ,
\label{est-d-eps} \\
\sum_{x\in\xi}\int_{\Ga _{0}}\left\vert K_{0}^{-1}b_\eps \left(
x,\cdot \cup \xi \setminus x \right) \right\vert \left( \eta \right)
\eps^{-|\eta|}C^{\left\vert \eta \right\vert }d\la \left( \eta
\right) \leq &a_{2}D_\eps \left( \xi \right) ,
\label{est-b-eps}\\
d_\eps \left( x,\xi \right) \leq &\,A(1+\left\vert \xi \right\vert)
^{N}\nu^{\left\vert \xi \right\vert }. \label{D-bdd-eps}
\end{align}
Without loss of generality we will assume that all constant in
\eqref{est-d-eps}--\eqref{D-bdd-eps} are the same as before.
\begin{proposition}\label{prop_exist_ren}
\begin{enumerate}
\item Let conditions \eqref{est-d-eps} and \eqref{est-b-eps} hold
with
\begin{equation}\label{a_small_1.5}
a_1+\frac{a_2}{C}<\frac{3}{2}.
\end{equation}
Then, for any $\eps\in(0;1]$, $\bigl(\hat{L}_{\eps,\mathrm{ren}},
\D^{(\eps)}\bigr)$is a~generator of the holomorphic semigroup
$\hat{U}_\eps(t)$ on $\mathcal{L}_C$.
\item Assume, additionally, that \eqref{D-bdd-eps} is satisfied with
\begin{equation}\label{small_nu_1.5}
1\leq\nu<\frac{C}{a_2}\Bigl(\frac{3}{2}-a_1\Bigr).
\end{equation}
Then there exists $\alpha_0\in(0;\frac{1}{\nu})$ such that for any
$\alpha\in(\alpha_0;\frac{1}{\nu})$ and for any $\eps\in(0;1]$ there
exists a~strongly continuous semigroup $\hat{U}^{\odot
\alpha}_\eps(t)$ on the space $\overline{\K_{\alpha C}}$ with the
generator $\hat{L}^{\odot
\alpha}_\eps=\hat{L}_{\eps,\mathrm{ren}}^\ast$
 on the domain
 \[
 \Dom\bigl( L^{\odot \alpha}_\eps\bigr)
 = \bigl\{k\in \overline{\K_{\alpha
C}} \bigm|\hat{L}_{\eps,\mathrm{ren}}^\ast k \in
\overline{\K_{\alpha C}} \bigr\}.
\]
Note that, for $k\in\K_{\alpha
C}$
\begin{align}\label{oper_adj_eps}
(\hat{L}_{\eps,\mathrm{ren}}^\ast k)(\eta)=&-\sum_{x\in \eta
}\int_{\Ga _{0}} k(\xi \cup \eta )\eps^{-|\xi|} \bigl(
K_0^{-1}d_\eps (x,\cdot\cup \eta\setminus x)\bigr)
(\xi)d\la (\xi ) \\
& +\sum_{x\in \eta }\int_{\Ga _{0}}k(\xi \cup (\eta \setminus
x))\eps^{-|\xi|} \bigl( K_0^{-1}b_\eps (x,\cdot\cup \eta\setminus
x)\bigr) (\xi)d\la (\xi).\notag
\end{align}
\end{enumerate}
\end{proposition}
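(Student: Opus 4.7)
The plan is to apply, essentially verbatim, the proofs of Theorem~\ref{th1} for Part~(1) and of Proposition~\ref{pr3} together with Theorem~\ref{hint} for Part~(2), with $d$, $b$, $D$ replaced by $d_\eps$, $b_\eps$, $D_\eps$, while carefully checking that the renormalization factors $\eps^{-|\eta\setminus\xi|}$ appearing in \eqref{newexpreps} are exactly absorbed by the hypotheses \eqref{est-d-eps}, \eqref{est-b-eps} and that all constants can be chosen uniformly in $\eps\in(0;1]$.

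For Part~(1), I would decompose $\hat{L}_{\eps,\mathrm{ren}}=L_0^{(\eps)}+L_1^{(\eps)}$ as done in the unscaled case. The multiplication operator $L_0^{(\eps)}$ is sectorial of every angle $\omega\in(0,\pi/2)$ with resolvent bound $\|R(z,L_0^{(\eps)})\|\leq (|z|\cos\omega)^{-1}$ by the same pointwise computation as in Lemma~\ref{lem1}, since only the nonnegativity of $D_\eps$ was used there. To establish the analog of Lemma~\ref{lem2}, I would estimate $\|L_1^{(\eps)}R(z,L_0^{(\eps)})G\|_C$: applying Lemma~\ref{Minlos} to reindex the sum over $\xi\subset\eta$ as a double integral in $(\xi,\eta\setminus\xi)$, the weight $C^{|\eta|}$ factorises as $C^{|\xi|}\cdot C^{|\eta\setminus\xi|}$ and, crucially, the factor $\eps^{-|\eta\setminus\xi|}$ from \eqref{newexpreps} combines with it so that the inner integration runs against $\eps^{-|\cdot|}C^{|\cdot|}\,d\la$, which is exactly the weight appearing in \eqref{est-d-eps}--\eqref{est-b-eps}. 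The death and birth terms thus contribute at most $(a_1-1)\|G\|_C$ and $(a_2/C)\|G\|_C$ respectively, yielding the relative bound $a_1-1+a_2/C<1/2$ under \eqref{a_small_1.5}. The holomorphic semigroup then follows from \cite[Theorem~III.2.10]{EN2000} exactly as at the end of the proof of Theorem~\ref{th1}; since the bounds do not depend on $\eps$, the angle and the constant $M_\eps$ can be chosen uniformly in $\eps\in(0;1]$.

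For Part~(2), set $\alpha_0:=a_2\bigl/\bigl(C\bigl(\tfrac{3}{2}-a_1\bigr)\bigr)$; by \eqref{small_nu_1.5} this satisfies $\alpha_0<1/\nu$, and for $\alpha\in(\alpha_0,1/\nu)$ the strict inequality $a_1+a_2/(\alpha C)<\tfrac{3}{2}$ holds. Hypotheses \eqref{est-d-eps}--\eqref{est-b-eps} are inherited when $C$ is replaced by $\alpha C$ (they are pointwise monotone in the weight, as in the Remark after Theorem~\ref{th1}), so Part~(1) applied in $\mathcal{L}_{\alpha C}$ produces a holomorphic semigroup $\hat{U}_{\eps,\alpha}(t)$ on $\mathcal{L}_{\alpha C}$ with generator $(\hat{L}_{\eps,\mathrm{ren}},\D_\alpha^{(\eps)})$. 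Next, using \eqref{D-bdd-eps} in conjunction with \eqref{est-d-eps}--\eqref{est-b-eps} and the elementary inequality \eqref{bdd}, the same chain of estimates as in Proposition~\ref{pr3} yields $\K_{\alpha C}\subset\Dom(\hat{L}_{\eps,\mathrm{ren}}^\ast)$ together with the explicit formula \eqref{oper_adj_eps}; the $\eps^{-|\xi|}$ in \eqref{oper_adj_eps} is precisely what one produces by an analog of \eqref{K-1-K} applied to the renormalised operator, and it again combines with $C^{|\xi|}$ to match the weight in the hypotheses, so all bounds remain uniform in $\eps\in(0;1]$.

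Finally, mimicking the proof of Theorem~\ref{hint} I would show that $\hat{U}_{\eps,\alpha}(t)G=\hat{U}_\eps(t)G$ on $\mathcal{L}_C$ by the resolvent identity (both resolvents agree on $\mathcal{L}_C$ because $\hat{L}_{\eps,\mathrm{ren}}$ acts identically in both spaces on $\D$). Duality then gives $\hat{U}_\eps^\ast(t)k=\hat{U}_{\eps,\alpha}^\ast(t)k\in\K_{\alpha C}$ for every $k\in\K_{\alpha C}$, so $\overline{\K_{\alpha C}}$ is a $\hat{U}_\eps^\ast(t)$-invariant Banach subspace of $\K_C$; by \cite[Subsection~II.2.6]{EN2000} the restriction is a strongly continuous semigroup $\hat{U}_\eps^{\odot\alpha}(t)$ whose generator is the part of $\hat{L}_{\eps,\mathrm{ren}}^\ast$ in $\overline{\K_{\alpha C}}$, i.e.\ the operator defined on $\{k\in\overline{\K_{\alpha C}}\mid \hat{L}_{\eps,\mathrm{ren}}^\ast k\in\overline{\K_{\alpha C}}\}$. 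The only real obstacle is the $\eps$-bookkeeping described above: once one sees that the weight $\eps^{-|\cdot|}C^{|\cdot|}$ in the hypotheses is precisely what the Minlos change of variables produces from the $\eps^{-|\eta\setminus\xi|}$ and $\eps^{-|\xi|}$ factors in \eqref{newexpreps} and \eqref{oper_adj_eps}, uniformity in $\eps\in(0;1]$ is built into the assumptions and no new quantitative estimate is required.
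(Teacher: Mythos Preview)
Your proposal is correct and follows essentially the same approach as the paper's own proof: decompose $\hat{L}_{\eps,\mathrm{ren}}=L_0^{(\eps)}+L_1^{(\eps)}$, repeat Lemmas~\ref{lem1}--\ref{lem2} with the renormalised weight $\eps^{-|\cdot|}C^{|\cdot|}$ supplied by the hypotheses, and then carry over Proposition~\ref{pr3} and Theorem~\ref{hint} verbatim. The only cosmetic difference is your explicit choice $\alpha_0=a_2\big/\bigl(C(\tfrac{3}{2}-a_1)\bigr)$, whereas the paper introduces an auxiliary $\theta\in\bigl(a_1+\tfrac{a_2\nu}{C},\tfrac{3}{2}\bigr)$ and sets $\alpha_0=a_2/(C(\theta-a_1))$; your value is simply the infimum of theirs and works just as well.
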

\begin{proof}
1. Identically to the proof of Lemma~\ref{lem1} we show that
$\bigl(L_0^{(\eps)}, \D^{(\eps)})\in\mathcal{H}_C(\omega)$ for any
$\omega\in(0;\frac{\pi}{2})$. Next, in the same way as in the proof
of Lemma~\ref{lem2} we prove that, for any $\Re z>0$,
\begin{equation}\label{expr_need}
\bigl\Vert L_{1}^{(\eps)}R(z,L_{0}^{(\eps)})\bigr\Vert \leq a_1-1+\frac{a_2}{C}<\frac{1}{2},
\end{equation}
since \eqref{statior-est} is satisfied. Note also that we may show
also another bound (cf.~\eqref{deep}):
\begin{equation}
\| L_1^{(\eps)}G\|<\frac{1}{2}\|L_0^{(\eps)}G\|,\quad
G\in\mathcal{L}_C.
\end{equation}
 Hence, one can
prove the statement in the same way as Theorem~\ref{th1}.

2. Similarly to Proposition~\ref{pr3}, we obtain that, under
condition \eqref{D-bdd-eps}, $\K_{\alpha C}\subset
\Dom(\hat{L}_{\eps,\mathrm{ren}}^\ast )$ for any
$\alpha\in(0;\frac{1}{\nu})$. Using \eqref{small_nu_1.5}, we are
able to choose $\theta\in(a_1+\frac{a_2\nu}{C};\frac{3}{2})$. Then
\eqref{nusmall} is satisfied, and
$\alpha_0:=\frac{a_{2}}{C(\theta-a_{1})}\in(0;\frac{1}{\nu})$. The
same considerations as in Theorem~\ref{hint} finish the proof.
\end{proof}

\begin{assumption}
For all $\eta,\xi\in\Ga_0$ and a.a. $x\in\X$ the following limits
exist and coincide:
\begin{align}
\lim_{\eps\rightarrow0} \eps^{-|\eta|}\bigl( K_{0}^{-1}d_\eps \left(
x,\cdot \cup \xi \right) \bigr) \left( \eta \right) =
\lim_{\eps\rightarrow0} \eps^{-|\eta|}\bigl( K_{0}^{-1}d_\eps \left(
x,\cdot \right) \bigr) \left( \eta \right) =:
D_x^V(\eta);\label{d-lim}
\\
\lim_{\eps\rightarrow0} \eps^{-|\eta|}\bigl( K_{0}^{-1}b_\eps \left(
x,\cdot \cup \xi \right) \bigr) \left( \eta \right) =
\lim_{\eps\rightarrow0} \eps^{-|\eta|}\bigl( K_{0}^{-1}b_\eps \left(
x,\cdot \right) \bigr) \left( \eta \right) =:
B_x^V(\eta).\label{b-lim}
\end{align}
We would like to emphasize, that above limits should not depend on
$\xi$. The collection of examples for such $d_\eps$, $b_\eps$ can be
found in \cite{FKK2010a}. Note that \eqref{d-lim}, \eqref{b-lim}
imply, in particular,
\begin{equation}
\lim_{\eps\rightarrow 0}d_\eps(x,\xi)=D_x^V(\emptyset),
\qquad
\lim_{\eps\rightarrow
0}b_\eps(x,\xi)=B_x^V(\emptyset),\label{lim_empty}
\end{equation}
for all $\xi\in\Ga_0$ and a.a. $x\in\X$.
\end{assumption}

Combining condition \eqref{d-lim} with \eqref{b-lim}, we have
point-wise limit for $\hat{L}_{\eps,\mathrm{ren}}$:
\begin{equation}
(\hat{L}_VG)(\eta ):=-\sum_{\xi \subset \eta }G(\xi )\sum_{x\in \xi
}D_{x}^{V}(\eta\setminus \xi)+\sum_{\xi \subset \eta }\int_\X\,G(\xi
\cup x)B_{x}^{V}(\eta\setminus \xi) dx.\label{newexprV}
\end{equation}
Set
\begin{align*}
D_V(\eta)&:=\sum_{x\in\eta}D_{x}^V(\emptyset);\\
\D^V&:=\bigl\{G\in\mathcal{L}_C\bigm|
D_V(\cdot)G\in\mathcal{L}_C\bigr\};\\
(L_0^VG)(\eta)&:=-
D_V(\eta)G(\eta), \qquad G\in\D^V;\\
(L_1^VG)(\eta)&:=(\hat{L}_{V}G)(\eta ) - (L_0^V G)(\eta), \qquad
G\in\D^V.
\end{align*}
Suppose that for a.a. $x\in\X$
\begin{align}
\int_{\Ga _{0}}\big| D_x^V(\eta)\bigr| C^{\left\vert \eta
\right\vert }d\la \left( \eta \right) &\leq a_{1}
D_x^{V}(\emptyset),
\label{est-d-V} \\
\int_{\Ga _{0}}\big| B_x^V(\eta)\bigr| C^{\left\vert \eta
\right\vert }d\la \left( \eta \right) &\leq a_{2}D_x^{V}(\emptyset)
,
\label{est-b-V}\\
D_x^{V}(\emptyset)&\leq A, \label{D-bdd-V}
\end{align}
where the constants are the same as before.

\begin{remark}\label{suffcond}
It is worth pointing out that conditions
\eqref{est-d-V}--\eqref{D-bdd-V}, in general, are weaker than
\eqref{est-d-eps}--\eqref{D-bdd-eps}. Indeed, if
$b_\eps(x,\ga)=b'(x,\ga)+\eps\cdot b''(x,\ga)$ then \eqref{est-b-V}
is an assumption on function $b'$ only, whereas \eqref{est-b-eps}
requires additional conditions on $b''$.
\end{remark}

Let $c>0$. We define $\bar{B}_c^\infty$ to be the closed ball of
radius $c$ in the Banach space $L^\infty(\X)$.

\begin{proposition}\label{prop_exist_V}
\begin{enumerate}
\item Let conditions \eqref{est-d-V}, \eqref{est-b-V}, and
\eqref{a_small_1.5} hold. Then $\bigl(\hat{L}_{V}, \D^V\bigr)$ is
a~generator of the holomorphic semigroup $\hat{U}_V(t)$ on
$\mathcal{L}_C$.
\item Suppose, additionally, \eqref{D-bdd-V}
is satisfied. Then, there exists $\alpha_0\in(0;1)$ such that for
any $\alpha\in(\alpha_0;1)$ there exists a~strongly continuous
semigroup $\hat{U}^{\odot \alpha}_V(t)$ on the space
$\overline{\K_{\alpha C}}$ with the generator $\hat{L}^{\odot
\alpha}_V=\hat{L}_{V}^\ast$,
\[
\Dom\bigl( L^{\odot \alpha}_V\bigr) = \bigl\{k\in
\overline{\K_{\alpha C}} \bigm|\hat{L}_V^\ast k \in
\overline{\K_{\alpha C}} \bigr\}.
\]
Moreover, for $k\in\K_{\alpha C}$
\begin{align}\label{oper_adj_V}
(\hat{L}_{V}^\ast k)(\eta)=&-\sum_{x\in \eta }\int_{\Ga _{0}} k(\xi
\cup \eta )D_x^V(\xi)d\la (\xi ) \\
& +\sum_{x\in \eta }\int_{\Ga _{0}}k(\xi \cup (\eta \setminus
x))B_x^V(\xi)d\la (\xi).\notag
\end{align}
\item Let $\alpha\in(\alpha_0;1)$, $\rho_0\in\bar{B}_{\alpha C}^\infty$. Then the evolution equation
\begin{equation}\label{QFP}
\begin{cases}
\dfrac{\partial}{\partial t} k_t=\hat{L}^\ast_V
k_t\\
k_t\bigr|_{t=0}=e_\la(\rho_{0},\eta)
\end{cases}
\end{equation}
has a~unique solution $k_t=e_\la(\rho_t)$ in $\overline{\K_{\alpha
C}}$ provided $\rho_t$ belongs to $\bar{B}_{\alpha C}^\infty$ and
satisfies the Vlasov-type equation
\begin{align} \label{Vlasov_eqn}
\dfrac{\partial}{\partial t} \rho_t(x)=&-\rho_t(x)\int_{\Ga _{0}}
e_\la(\rho_t,\xi)D_x^V(\xi)d\la (\xi ) \\&+\int_{\Ga
_{0}}e_\la(\rho_t,\xi)B_x^V(\xi)d\la (\xi). \notag
\end{align}

\end{enumerate}
\end{proposition}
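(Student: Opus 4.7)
My plan is to handle the proposition in three stages, each reusing material already established.

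For parts~1 and~2, I would transcribe the arguments of Theorem~\ref{th1}, Proposition~\ref{pr3}, and Theorem~\ref{hint}, replacing $d, b, D$ by their Vlasov-type analogues $D_x^V(\emptyset)$, $B_x^V(\emptyset)$, $D_V$, respectively. Since $D_V \geq 0$ on $\Ga_0$, the argument of Lemma~\ref{lem1} shows that $L_0^V \in \mathcal{H}_C(\omega)$ for every $\omega \in (0, \pi/2)$ with $M = 1/\cos\omega$. The relative-bound calculation of Lemma~\ref{lem2}, carried out via Lemma~\ref{Minlos} and using \eqref{est-d-V}, \eqref{est-b-V}, yields
\[
\|L_1^V R(z, L_0^V)\| \leq a_1 - 1 + \frac{a_2}{C} < \frac{1}{2}, \quad \Re z > 0,
\]
under \eqref{a_small_1.5}, whence \cite[Theorem III.2.10]{EN2000} gives the holomorphic semigroup $\hat{U}_V(t)$ of part~1. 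For part~2, note that \eqref{D-bdd-V} is just \eqref{D-bdd} with $\nu = 1$ and $N = 1$, so the proof of Proposition~\ref{pr3} gives $\K_{\alpha C} \subset \Dom(\hat{L}_V^\ast)$ for every $\alpha \in (0, 1)$ together with the explicit formula \eqref{oper_adj_V}, and the argument of Theorem~\ref{hint} yields the $\hat{U}_V^\ast(t)$-invariance of $\overline{\K_{\alpha C}}$ whenever $\alpha_0 := a_2/[C(\tfrac{3}{2} - a_1)] < \alpha < 1$, with the sun-dual $C_0$-semigroup $\hat{U}_V^{\odot\alpha}(t)$ obtained by the standard restriction procedure.

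For part~3, I would first observe that $\|\rho_t\|_{L^\infty(\X)} \leq \alpha C$ implies $|e_\la(\rho_t, \eta)| \leq (\alpha C)^{|\eta|}$, so $e_\la(\rho_t) \in \K_{\alpha C}$ with $\|e_\la(\rho_t)\|_{\K_{\alpha C}} \leq 1$. Substituting $k_t = e_\la(\rho_t)$ into the explicit expression \eqref{oper_adj_V} and using the multiplicativity $e_\la(\rho_t, \xi \cup \zeta) = e_\la(\rho_t, \xi)\, e_\la(\rho_t, \zeta)$ for $\la$-a.e.\ disjoint $\xi, \zeta$, together with $e_\la(\rho_t, \eta) = \rho_t(x)\, e_\la(\rho_t, \eta \setminus x)$ for $x \in \eta$, a direct computation gives
\[
(\hat{L}_V^\ast e_\la(\rho_t))(\eta) = \sum_{x \in \eta} e_\la(\rho_t, \eta \setminus x) \biggl[-\rho_t(x)\!\! \int_{\Ga_0}\!\! e_\la(\rho_t, \xi) D_x^V(\xi)\, d\la(\xi) + \!\!\int_{\Ga_0}\!\! e_\la(\rho_t, \xi) B_x^V(\xi)\, d\la(\xi)\biggr].
\]
By the chain rule, $\frac{\partial}{\partial t} e_\la(\rho_t, \eta) = \sum_{x \in \eta} \bigl(\frac{\partial}{\partial t}\rho_t(x)\bigr)\, e_\la(\rho_t, \eta \setminus x)$, and the two sides match term by term in $x \in \eta$ precisely when $\rho_t$ satisfies \eqref{Vlasov_eqn}. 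Uniqueness follows from part~2: $\hat{L}_V^{\odot\alpha}$ generates a $C_0$-semigroup on $\overline{\K_{\alpha C}}$, so any strong solution of \eqref{QFP} with initial datum $e_\la(\rho_0)$ coincides with $\hat{U}_V^{\odot\alpha}(t)\, e_\la(\rho_0)$.

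The main obstacle is the verification that $t \mapsto e_\la(\rho_t)$ is in fact a \emph{strong} solution in $\overline{\K_{\alpha C}}$, i.e.\ continuously differentiable in the $\K_C$-norm with derivative $\hat{L}_V^\ast e_\la(\rho_t)$ lying in $\overline{\K_{\alpha C}}$ for every $t$. Controlling the $\K_{\alpha C}$-norm of $\hat{L}_V^\ast e_\la(\rho_t)$ uniformly on compact time intervals reduces, via \eqref{est-d-V}--\eqref{D-bdd-V} and the bound $\|\rho_t\|_\infty \leq \alpha C$, to estimates of the same kind as in Proposition~\ref{pr3}, while strong $\K_C$-continuity of $t \mapsto e_\la(\rho_t)$ comes from the continuity of $\rho_t$ in $L^\infty(\X)$ that is implicit in reading \eqref{Vlasov_eqn} as an $L^\infty$-valued ODE. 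Existence and regularity of $\rho_t$ itself are granted by the hypothesis and need not be addressed here.
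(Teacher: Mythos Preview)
Your proposal is correct and follows essentially the same route as the paper's proof: parts~1 and~2 are handled by re-running Theorem~\ref{th1}, Proposition~\ref{pr3}, and Theorem~\ref{hint} with the Vlasov data (the paper phrases this as ``similar to Proposition~\ref{prop_exist_ren}'' and ``in much the same way as in Theorem~\ref{hint}''), and part~3 is exactly the computation of $\hat{L}_V^\ast e_\la(\rho_t)$ combined with the chain rule for $\partial_t e_\la(\rho_t,\eta)$, with uniqueness coming from the $C_0$-semigroup of part~2. Two minor remarks: \eqref{D-bdd-V} corresponds to \eqref{D-bdd} with $\nu=1$ and $N=0$ (not $N=1$), which is harmless; and the paper takes $\alpha_0=a_2/[C(\theta-a_1)]$ for some $\theta\in(a_1+a_2/C,\,3/2)$ rather than your limiting value, but this is only a cosmetic difference. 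Your closing paragraph on strong differentiability in $\K_C$ is more careful than the paper, which simply asserts that $e_\la(\rho_t)$ solves \eqref{QFP}.
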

\begin{proof}
1. The proof for the first statement is similar to the analogous one
one in Proposition~\ref{prop_exist_ren}.

2. The same arguments as for the proof of Proposition~\ref{pr3} show
that, for any $\alpha\in(0;1)$, $\K_{\alpha C}\subset
\Dom(\hat{L}^*_V)$. Next, by \eqref{a_small_1.5}, let us now take
$\theta\in(a_1+\frac{a_2}{C};\frac{3}{2})$. Then we can set
$\alpha_0:=\frac{a_{2}}{C(\theta-a_{1})}\in(0;1)$. The second
statement can be handled now in much the same way as in
Theorem~\ref{hint}.

3. Since $\rho_0\in\bar{B}_{\alpha C}^\infty$ implies
$k_0\in\overline{\K_{\alpha C}}$ then the Cauchy problem \eqref{QFP}
has a~unique solution in $\overline{\K_{\alpha C}}$. On the other
hand, according to \eqref{oper_adj_eps}, for any $\rho_t\in
\bar{B}_{\alpha C}^\infty$
\begin{align}\notag
(\hat{L}_{V}^\ast
e_\la(\rho_{t}))(\eta)=&-\sum_{x\in \eta }e_\la(\rho_{t},\eta)\int_{\Ga _{0}} e_\la(\rho_{t},\xi)D_x^V(\xi)d\la (\xi ) \\
& +\sum_{x\in \eta }e_\la(\rho_{t},\eta\setminus x)\int_{\Ga
_{0}}e_\la(\rho_{t},\xi)B_x^V(\xi)d\la (\xi).\label{actonexp}
\end{align}
Combining \eqref{actonexp} with the equality
\[
\dfrac{\partial}{\partial t}
e_\la(\rho_t,\eta)=\sum_{x\in\eta}\rho_t(x)
e_\la(\rho_t,\eta\setminus x),
\]
we can assert that $k_t=e_\la(\rho_t)$ is a~solution to \eqref{QFP},
with $\rho_t$ given by \eqref{Vlasov_eqn}.
\end{proof}

\begin{remark}
The question about existence and uniqueness of solutions to the
Vlasov-type equation \eqref{Vlasov_eqn} in some ball
$\bar{B}_{\alpha C}^\infty$ of $L^\infty(\X)$ shall be solved
separately in each concrete model, see e.g. \cite{FKK2010b,
FKK2010c}.
\end{remark}

Our next goal is to study the question about convergence of the
semigroups $\hat{U}_\eps(t)$ to $\hat{U}_V(t)$ in $\mathcal{L}_C$.

We begin by proving the following abstract statement.
\begin{lemma}\label{lemma_conv}
Let $X$ be a~Banach space, and let $ \left(
A_\eps,\mathfrak{D}_\eps\right) $, $\left( B_{\eps
},\mathfrak{D}_\eps \right) $, $\eps \geq 0$ be closed, densely
defined operators on $X$. Suppose that there exists $\beta >0$ and
$z\in\mathbb{C}$ with $\Re z>\beta$ such that $z \in \rho \left(
A_\eps\right) $ for all $\eps\geq0$ and
\begin{gather}
\kappa:=\sup_{\eps >0} \bigl\Vert \left( A_\eps-z \1 \right)
^{-1}\bigr\Vert<\infty,\label{unibound1}\\
\sigma :=\sup_{\eps \geq 0}\left\Vert B_{\eps }\left(
A_\eps-z\1 \right) ^{-1}\right\Vert <1, \label{unibound2} \\
\left( A_\eps-z \1 \right)
^{-1}\overset{s}{\longrightarrow } \left( A_0-z\1 \right)
^{-1}, \quad \eps\rightarrow0,\label{convres1}\\
B_{\eps }\left( A_\eps-z\1 \right) ^{-1}\overset{s}{\longrightarrow
} B_{0}\left( A_{0}-z\1 \right) ^{-1},\quad\eps \rightarrow 0.
\label{convgood}
\end{gather} Then $z$ belongs to the resolvent set of $L_{\eps }:=A_\eps+B_{\eps }$,
$\eps \geq 0$ and
\[
\left( L_{\eps }-z\1\right) ^{-1}\overset{s}{\longrightarrow }
\left( L_{0}-z\1 \right) ^{-1}, \quad \eps \rightarrow 0.
\]
\end{lemma}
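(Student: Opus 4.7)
The plan is to reduce the statement to a Neumann-series perturbation on $X$, by peeling off the invertible factor $A_\eps-z\1$. On the common domain $\mathfrak{D}_\eps$ one has the formal factorization
\[
L_\eps-z\1 \;=\; (I+T_\eps)(A_\eps-z\1), \qquad T_\eps:=B_\eps(A_\eps-z\1)^{-1}.
\]
Assumption \eqref{unibound2} gives $\|T_\eps\|\leq\sigma<1$ uniformly in $\eps\geq 0$, so by Neumann series $I+T_\eps$ is boundedly invertible on $X$ with $\|(I+T_\eps)^{-1}\|\leq (1-\sigma)^{-1}$. Since $A_\eps-z\1$ is bijective from $\mathfrak{D}_\eps$ onto $X$, this yields $z\in\rho(L_\eps)$ together with the explicit formula
\[
(L_\eps-z\1)^{-1} \;=\; (A_\eps-z\1)^{-1}(I+T_\eps)^{-1}, \qquad \eps\geq 0.
\]

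Next I would establish the strong convergence $(I+T_\eps)^{-1}\xrightarrow{s}(I+T_0)^{-1}$ by means of the standard identity
\[
(I+T_\eps)^{-1}-(I+T_0)^{-1} \;=\; (I+T_\eps)^{-1}(T_0-T_\eps)(I+T_0)^{-1}.
\]
Applied to an arbitrary $x\in X$ with $y:=(I+T_0)^{-1}x$, assumption \eqref{convgood} gives $(T_\eps-T_0)y\to 0$ in norm, while the leading factor is bounded by $(1-\sigma)^{-1}$ uniformly in $\eps$.

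Putting the pieces together: set $u_\eps:=(I+T_\eps)^{-1}x$ and $u_0:=(I+T_0)^{-1}x$, so that $u_\eps\to u_0$ by the preceding step. Then
\[
(L_\eps-z\1)^{-1}x-(L_0-z\1)^{-1}x \;=\; (A_\eps-z\1)^{-1}(u_\eps-u_0) + \bigl[(A_\eps-z\1)^{-1}-(A_0-z\1)^{-1}\bigr]u_0,
\]
where the first summand tends to zero because of the uniform bound \eqref{unibound1} together with $u_\eps\to u_0$, and the second by \eqref{convres1}. The hypothesis $\Re z>\beta$ is only needed to keep the invertibility assumptions available. The main technical point, rather than a genuine obstacle, is that \eqref{convgood} provides strong convergence of the composed operator $B_\eps(A_\eps-z\1)^{-1}$ and not of $B_\eps$ itself (which is typically unbounded); the factorization above is organized precisely so that only this composition is ever used.
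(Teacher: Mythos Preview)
Your argument is correct and follows essentially the same route as the paper: the same factorization $L_\eps-z\1=(I+T_\eps)(A_\eps-z\1)$ with $T_\eps=B_\eps(A_\eps-z\1)^{-1}$, the same Neumann-series inversion of $I+T_\eps$ with the uniform bound $(1-\sigma)^{-1}$, the same resolvent-difference identity for $(I+T_\eps)^{-1}-(I+T_0)^{-1}$, and the same final splitting into a term controlled by \eqref{unibound1} and one by \eqref{convres1}. The only differences are notational.
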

\begin{proof}
 For any
$\eps\geq 0$ we set $C_\eps:=( A_\eps-z\1 ) ^{-1}$, then we have
$\mathrm{Ran}(C_\eps) =\Dom\left( A_\eps\right) =\Dom\left(
B_\eps\right) = \Dom(L_\eps)=\mathfrak{D}_\eps $. Therefore, for any
$z \in \rho \left( A_\eps\right) $ one can write
\begin{equation*}
L_\eps -z\1=A_\eps+B_\eps-z\1 =\bigl( B_\eps\left( A_\eps-z\1\right)
^{-1}+\1\bigr) \left( A_\eps-z \1\right). \label{o1}
\end{equation*}
By \eqref{unibound2}, the operator $B_\eps\left( A_\eps-z \1\right)
^{-1}+\1=B_\eps C_\eps+\1$ is invertible with bounded inverse
$D_\eps$. Moreover,
\begin{equation}\label{normD}
\|D_\eps\|\leq\frac{1}{1-\|B_\eps C_\eps\|}\leq\frac{1}{1-\sigma}.
\end{equation}
Therefore, we have
that $z\in\rho(L_\eps)$ and
\begin{equation}
(L_\eps -z\1)^{-1}=\left( A_\eps-z\1\right)^{-1}\bigl( B_\eps
C_\eps+\1\bigr)^{-1}=C_\eps D_\eps . \label{expr_for_res}
\end{equation}
Next,
\begin{align*}
&D_\eps-D_0=\left( B_{\eps }C_{\eps}+\1\right)
^{-1}-\left( B_{0}C_{0}+\1\right) ^{-1} \\
=&\left( B_{\eps }C_{\eps}+\1\right) ^{-1}\left( \left(
B_{0}C_{0}+\1\right) -\left( B_{\eps }C_{\eps}+\1\right) \right)
\left(
B_{0}C_{0}+\1\right) ^{-1} \\
=&D_{\eps}\left( B_{0}C_{0}-B_{\eps }C_{\eps}\right) D_{0},
\end{align*} thus, according to \eqref{normD} and \eqref{convgood},
for any $x\in X$
\begin{align*}
\|D_\eps x-D_0 x\| &\leq \|D_\eps\|\cdot\|\left( B_{0}C_{0}-B_{\eps
}C_{\eps}\right) D_{0}x\|\\&\leq \frac{1}{1-\sigma}\|\left(
B_{0}C_{0}-B_{\eps }C_{\eps}\right) D_{0}x\|\rightarrow 0, \quad
\eps\rightarrow 0.
\end{align*}
Hence, $D_\eps\overset{s}{\longrightarrow} D_0$. Then, using
\eqref{expr_for_res} and \eqref{convres1}, we have for any $x\in X$
\begin{align*}
&\bigl\|(L_\eps-z\1)^{-1}x-(L_0-z\1)^{-1}x\bigr\|\\=&\,\bigl\|C_\eps
D_\eps x-C_0D_0x\bigr\|=\bigl\|C_\eps(D_\eps-D_0)x+(C_\eps-C_0)D_0x\bigr\|\\
\leq&\, \bigl\|C_\eps\bigr\|\cdot\bigl\|(D_\eps-D_0)x\bigr\|+\bigl\|
(C_\eps -C_0)D_0 x\bigr\|\\\leq &\, \kappa
\cdot\bigl\|(D_\eps-D_0)x\bigr\|+\bigl\|
(C_\eps -C_0)D_0 x\bigr\|\rightarrow 0, \quad \eps\rightarrow 0.
\end{align*}
The statement is proven.
\end{proof}

Now we are able to prove result about convergence in
$\mathcal{L}_C$.

\begin{theorem}\label{conv_qo}
Let conditions \eqref{est-d-eps}, \eqref{est-b-eps}, and
\eqref{a_small_1.5} are satisfied. Suppose that convergences
\eqref{d-lim}, \eqref{b-lim} take place for all $\eta\in\Ga_0$ as
well as in the sense of $\mathcal{L}_C$. Assume also that there
exists $\sigma>0$ such that (cf. \eqref{lim_empty}) either
\begin{equation}\label{either}
d_\eps(x,\xi)\leq \sigma D_x^V(\emptyset)\qquad \text{or} \qquad
d_\eps(x,\xi)\geq \sigma D_x^V(\emptyset)
\end{equation}
is satisfied for all $\xi\in\Ga_0$ and for a.a. $x\in\X$. Then
$\hat{U}_\eps(t)\overset{s}{\longrightarrow } \hat{U}_V(t)$ in
$\mathcal{L}_C$ uniformly on finite time intervals.
\end{theorem}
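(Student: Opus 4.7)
The plan is to apply the abstract convergence Lemma \ref{lemma_conv} in the Banach space $X = \mathcal{L}_C$ with the choice $A_\eps := L_0^{(\eps)}$, $B_\eps := L_1^{(\eps)}$ for $\eps > 0$ and $A_0 := L_0^V$, $B_0 := L_1^V$, so that $A_\eps + B_\eps = \hat{L}_{\eps,\mathrm{ren}}$ and $A_0 + B_0 = \hat{L}_V$; then to upgrade the resulting single-point strong resolvent convergence to uniform-in-$t$ strong convergence of semigroups via the Trotter--Kato approximation theorem for holomorphic semigroups. Propositions \ref{prop_exist_ren}(1) and \ref{prop_exist_V}(1) guarantee that all generators involved are closed and densely defined on $\mathcal{L}_C$, so this framework applies.

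For a fixed real $z > 0$, the uniform bound \eqref{unibound1} follows from Lemma \ref{lem1} applied to each $L_0^{(\eps)}$ (and to $L_0^V$): $\|R(z, A_\eps)\| \leq 1/z$, with $\kappa := 1/z$ independent of $\eps$. The uniform bound \eqref{unibound2} follows from estimate \eqref{expr_need} in the proof of Proposition \ref{prop_exist_ren} together with its obvious $\eps = 0$ analogue based on \eqref{est-d-V}, \eqref{est-b-V}: $\|B_\eps R(z, A_\eps)\| \leq a_1 - 1 + a_2/C < 1/2$, uniformly in $\eps \geq 0$. The strong resolvent convergence \eqref{convres1} is elementary: for $G \in \mathcal{L}_C$,
\begin{equation*}
\bigl((R(z,A_\eps) - R(z,A_0))G\bigr)(\eta) = \frac{D_V(\eta) - D_\eps(\eta)}{(z + D_\eps(\eta))(z + D_V(\eta))}\, G(\eta),
\end{equation*}
which tends to $0$ pointwise by \eqref{lim_empty} and is dominated by $2|G(\eta)|/z \in \mathcal{L}_C$; dominated convergence concludes.

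The hard part is verifying \eqref{convgood}: strong convergence of $B_\eps R(z, A_\eps)$ to $B_0 R(z, A_0)$. Writing $G_\eps := R(z, A_\eps) G$ and plugging into the explicit formula \eqref{newexpreps} defining $L_1^{(\eps)}$, one must pass to the limit in iterated Lebesgue--Poisson integrals built out of products of $G_\eps$ with the kernels $\eps^{-|\cdot|} K_0^{-1} d_\eps(x, \cdot \cup \xi \setminus x)$ and $\eps^{-|\cdot|} K_0^{-1} b_\eps(x, \cdot \cup \xi)$. Pointwise convergence of the kernels to $D_x^V$ and $B_x^V$ is Assumption \eqref{d-lim}, \eqref{b-lim} (crucially, the limits do not depend on $\xi$, which is what ensures the limiting operator is $L_1^V$), and the theorem assumes also convergence in $\mathcal{L}_C$. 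The obstacle is to produce an $\eps$-independent integrable majorant. This is precisely where condition \eqref{either} enters: since $|z + D_\eps(\eta)|^{-1}$ is already uniformly bounded by $1/z$, the comparison $d_\eps(x,\xi) \lessgtr \sigma D_x^V(\emptyset)$ allows one to bound the remaining $d_\eps$-factor appearing after reduction by Minlos' Lemma \ref{Minlos} against $D_x^V$, and then use the integrability bounds \eqref{est-d-eps}, \eqref{est-b-eps} on the $\eps$-side and \eqref{est-d-V}, \eqref{est-b-V} on the limiting side to produce a common majorant on $\Gamma_0 \times \Gamma_0 \times \X$. A Tonelli--Lebesgue dominated convergence argument in $\mathcal{L}_C$ then yields \eqref{convgood}.

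Once Lemma \ref{lemma_conv} applies, we obtain $R(z, \hat{L}_{\eps,\mathrm{ren}}) \overset{s}{\to} R(z, \hat{L}_V)$ in $\mathcal{L}_C$ at a single $z > 0$. Because, by the proofs of Propositions \ref{prop_exist_ren} and \ref{prop_exist_V}, the semigroups $\hat{U}_\eps(t)$ and $\hat{U}_V(t)$ are holomorphic with $\eps$-uniform resolvent bounds in a common sector $\mathrm{Sect}(\pi/2 + \omega)$ (the bounds from Lemma \ref{lem1} carry over uniformly in $\eps$ since the perturbation constant $a_1 - 1 + a_2/C$ is independent of $\eps$), the Trotter--Kato theorem in its form for equibounded holomorphic semigroups (see e.g.\ \cite[Thm III.4.9]{EN2000}) upgrades the strong resolvent convergence to $\hat{U}_\eps(t) \overset{s}{\to} \hat{U}_V(t)$ in $\mathcal{L}_C$ uniformly for $t$ in compact subsets of $[0, \infty)$, which is the desired conclusion.
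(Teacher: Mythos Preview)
Your overall architecture matches the paper exactly: apply Lemma~\ref{lemma_conv} with $A_\eps=L_0^{(\eps)}$, $B_\eps=L_1^{(\eps)}$, $A_0=L_0^V$, $B_0=L_1^V$, verify \eqref{unibound1}--\eqref{convgood}, then invoke Trotter--Kato. Your treatment of \eqref{unibound1}, \eqref{unibound2}, \eqref{convres1} is the same as the paper's (the paper also notes, as you should, that \eqref{est-d-V}, \eqref{est-b-V} are not assumed but follow from the $\mathcal{L}_C$-convergence in \eqref{d-lim}, \eqref{b-lim}).

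However, your explanation of \eqref{convgood} contains a real gap. You write that ``$|z+D_\eps(\eta)|^{-1}$ is already uniformly bounded by $1/z$,'' and that \eqref{either} then lets you control the remaining $d_\eps$-factor. But if you discard the resolvent denominator via $1/(z+D_\eps)\le 1/z$, the inner $\eta$-integral after Minlos contributes $a_1 D_\eps(\xi)$ (or $a_1 D_V(\xi)$), and $|G(\xi)|\,D_\eps(\xi)$ is \emph{not} in $\mathcal{L}_C$ for general $G\in\mathcal{L}_C$; no amount of comparing $d_\eps$ to $D_x^V(\emptyset)$ fixes this. The point is that the denominator must be \emph{kept} and paired with the matching numerator so that the ratio $D(\xi)/(z+D(\xi))\le 1$ appears. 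This is exactly why the paper treats the two cases of \eqref{either} by two \emph{different} algebraic decompositions of $B_\eps R(z,A_\eps)-B_0R(z,A_0)$:
\[
\bigl(L_1^{(\eps)}-L_1^V\bigr)R(z,L_0^V)+L_1^{(\eps)}\bigl(R(z,L_0^{(\eps)})-R(z,L_0^V)\bigr)
\]
when $d_\eps\le\sigma D_x^V(\emptyset)$ (so the first term carries denominator $z+D^V(\xi)$, against which the numerator $\le(1+\sigma)D^V(\xi)$ is bounded), and
\[
\bigl(L_1^{(\eps)}-L_1^V\bigr)R(z,L_0^{(\eps)})+L_1^{V}\bigl(R(z,L_0^{(\eps)})-R(z,L_0^V)\bigr)
\]
when $d_\eps\ge\sigma D_x^V(\emptyset)$ (now the first term carries denominator $z+D^{(\eps)}(\xi)$, against which the numerator $\le(1+\sigma^{-1})D^{(\eps)}(\xi)$ is bounded). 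Only after this pairing does dominated convergence apply. Your sketch has the right ingredients but the wrong mechanism; once you rewrite the difference as above, the rest of your argument goes through as in the paper.
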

\begin{proof} First of all note that $\mathcal{L}_C$-convergence
in \eqref{d-lim}, \eqref{b-lim} together with \eqref{lim_empty}
yields \eqref{est-d-V}, \eqref{est-b-V} provided \eqref{est-d-eps},
\eqref{est-b-eps} hold. Then, by Propositions~\ref{prop_exist_ren},
\ref{prop_exist_V}, the semigroups $\hat{U}_\eps(t)$, $\hat{U}_V(t)$
exist in $\mathcal{L}_C$. To prove their convergence it is enough to
show the strong convergence of the resolvent corresponding to the
generators of this semigroup, see e.g.
\cite[Theorem~III.4.8]{EN2000}. To verify this, we apply
Lemma~\ref{lemma_conv} taking $A_\eps=L_0^{(\eps)}$,
$B_\eps=L_1^{(\eps)}$, $L_\eps=\hat{L}_{\eps,\mathrm{ren}}$,
$\mathfrak{D}_0=\D^V$, $\mathfrak{D}_\eps=\D^{(\eps)}$, $\eps>0$.
Below we check the conditions of this lemma.

Let us fix any $z>0$. It is easily seen that \eqref{unibound1} is
satisfied since
\[
\bigl\Vert \bigl( L_0^{(\eps)}-z \1 \bigr) ^{-1}\bigr\Vert\leq
\frac{1}{z }
\]
for all $\eps\in(0;1]$. Clearly, \eqref{expr_need} implies
\eqref{unibound2}. Let $G\in\mathcal{L}_C$. Then
\begin{align*}
&\bigl\| \bigl( L_0^{(\eps)}-z \1 \bigr)
^{-1}G- \bigl( L_0^{V}-z \1 \bigr)
^{-1}G\bigr\|_C \\
\leq &\, \int_{\Ga_0}\frac{\bigl|D^{(\eps)}(\eta)-D^V(\eta)\bigr|}
{\bigl(z+D^V(\eta)\bigr)\bigl(z+D^{(\eps)}(\eta)\bigr)}|G(\eta)|d\la(\eta).
\end{align*}
By \eqref{lim_empty}, for all $ \eta\in\Ga_0$
\begin{equation}\label{eqn1}
D^{(\eps)}(\eta)\rightarrow D^V(\eta), \quad \eps\rightarrow0.
\end{equation}
Then the
inequality
\[
\frac{\bigl|D^{(\eps)}(\eta)-D^V(\eta)\bigr|}
{\bigl(z+D^V(\eta)\bigr)\bigl(z+D^{(\eps)}(\eta)\bigr)}\leq\frac{1}{z+D^V(\eta)}+\frac{1}{z+D^{(\eps)}(\eta)}\leq\frac{2}{z}
\]
implies \eqref{convres1} by the dominated
convergence theorem.

Let inequality $d_\eps(x,\xi)\leq \sigma D_x^V(\emptyset)$ hold for
all $\xi\in\Ga_0$ and a.a. $x\in\X$. Then, by Lemma~\ref{Minlos},
\begin{align}
&\Bigl\|L_1^{(\eps)} \bigl( L_0^{(\eps)}-z \1 \bigr)
^{-1}G- L_1^{V} \bigl( L_0^{V}-z \1 \bigr)
^{-1}G\Bigr\|_C \notag\\
\leq &\, \Bigl\|\bigl(L_1^{(\eps)} - L_1^{V}\bigr)\bigl( L_0^{V}-z
\1 \bigr) ^{-1}G \Bigr\|_C+\Bigl\|L_1^{(\eps)}\Bigl(\bigl(
L_0^{(\eps)}-z \1 \bigr) ^{-1}-\bigl( L_0^{V}-z \1 \bigr)
^{-1}\Bigr)G \Bigr\|_C\notag\\\leq&\,
\int_{\Ga_0}\frac{|G(\xi)|}{z+D^{V}(\xi)}\sum_{x\in\xi}\int_{\Ga_0}
\Bigl| \eps^{-|\eta|} K_0^{-1}d_\eps(x,\cdot\cup\xi\setminus
x)(\eta)-D_{x}^{V}(\eta)\Bigr|C^{|\eta|}d\la(\eta)C^{|\xi|}d\la(\xi)
\notag\\&
+\frac{1}{C}\int_{\Ga_0}\frac{|G(\xi)|}{z+D^{V}(\xi)}\sum_{x\in\xi}
\int_{\Ga_0}\Bigl| \eps^{-|\eta|}
K_0^{-1}b_\eps(x,\cdot\cup\xi\setminus
x)(\eta)-B_{x}^{V}(\eta)\Bigr|C^{|\eta|}d\la(\eta)C^{|\xi|}d\la(\xi)\notag\\&+
\int_{\Ga_0}|G(\xi)|\frac{\bigl|D^{(\eps)}(\xi)-D^V(\xi)\bigr|}
{\bigl(z+D^V(\xi)\bigr)\bigl(z+D^{(\eps)}(\xi)\bigr)}\sum_{x\in\xi}
\int_{\Ga_0}\eps^{-|\eta|}\Bigl( \bigl|
K_0^{-1}d_\eps(x,\cdot\cup\xi\setminus
x)(\eta)\bigr|\notag\\&\qquad\qquad+\frac{1}{C}\bigl|
K_0^{-1}b_\eps(x,\cdot\cup\xi\setminus
x)(\eta)\bigr|\Bigr)C^{|\eta|}d\la(\eta)C^{|\xi|}d\la(\xi).\label{newdopnew}
\end{align}
Convergence in $\mathcal{L}_C$ for \eqref{d-lim}, \eqref{b-lim}
together with \eqref{eqn1} implies that all three integrand
functions of $\xi$ appearing in \eqref{newdopnew} converge to $0$
$\la$-a.s., as $\eps\rightarrow 0$. To use dominated convergence
theorem we will show that the following functions are uniformly
bounded. Using \eqref{est-d-eps}, \eqref{est-d-V}, and
\eqref{either}, we get
\begin{align*}
&\frac{1}{z+D^V(\xi)}\sum_{x\in\xi}\int_{\Ga_0}
\Bigl| \eps^{-|\eta|} K_0^{-1}d_\eps(x,\cdot\cup\xi\setminus
x)(\eta)-D_{x}^{V}(\eta)\Bigr|C^{|\eta|}d\la(\eta)\\
\leq &
\frac{a_1}{z+D^V(\xi)}\sum_{x\in\xi}\bigl(d_\eps(x,\xi)+D_x^V(\emptyset)\bigr)
\leq
\frac{a_{1}(1+\sigma)}{z+D^V(\xi)}\sum_{x\in\xi}D_x^V(\emptyset)\leq
a_{1}(1+\sigma).
\end{align*}
Analogously, by \eqref{est-b-eps}, \eqref{est-b-V}, and
\eqref{either},
\[
\frac{1}{z+D^V(\xi)}\sum_{x\in\xi}\int_{\Ga_0} \Bigl| \eps^{-|\eta|}
K_0^{-1}b_\eps(x,\cdot\cup\xi\setminus
x)(\eta)-B_{x}^{V}(\eta)\Bigr|C^{|\eta|}d\la(\eta)\leq
a_{1}(1+\sigma).
\]
According to \eqref{est-d-eps}, \eqref{est-b-eps}, and
\eqref{either},
\begin{align*}
&\frac{\bigl|D^{(\eps)}(\xi)-D^V(\xi)\bigr|}
{\bigl(z+D^V(\xi)\bigr)\bigl(z+D^{(\eps)}(\xi)\bigr)}\sum_{x\in\xi}
\int_{\Ga_0}\eps^{-|\eta|}\Bigl( \bigl|
K_0^{-1}d_\eps(x,\cdot\cup\xi\setminus
x)(\eta)\bigr|\\&\qquad\qquad+\frac{1}{C}\bigl|
K_0^{-1}b_\eps(x,\cdot\cup\xi\setminus
x)(\eta)\bigr|\Bigr)C^{|\eta|}d\la(\eta)\\\leq&
\frac{D^{(\eps)}(\xi)+D^V(\xi)}
{\bigl(z+D^V(\xi)\bigr)\bigl(z+D^{(\eps)}(\xi)\bigr)}\sum_{x\in\xi}\Bigl(a_1d_\eps(x,\xi)+\frac{a_2}{C}d_\eps(x,\xi)\Bigr)\\
\leq&\frac{D^{(\eps)}(\xi)}
{\bigl(z+D^V(\xi)\bigr)\bigl(z+D^{(\eps)}(\xi)\bigr)}\Bigl(a_1+\frac{a_2}{C}\Bigr)\sigma D^{V}(\xi)\\&+\frac{D^V(\xi)}
{\bigl(z+D^V(\xi)\bigr)\bigl(z+D^{(\eps)}(\xi)\bigr)}\Bigl(a_1+\frac{a_2}{C}\Bigr)D^{(\eps)}(\xi)\leq\Bigl(a_1+\frac{a_2}{C}\Bigr)(1+\sigma).
\end{align*}
Hence, \eqref{convgood} is proved.

In the case $d_\eps(x,\xi)\geq \sigma D_x^V(\emptyset)$,
$\xi\in\Ga_0$, we rewrite l.h.s. of \eqref{convgood} in a~another
manner. Namely,
\begin{align*}
&\Bigl\|L_1^{(\eps)} \bigl( L_0^{(\eps)}-z \1 \bigr)
^{-1}G- L_1^{V} \bigl( L_0^{V}-z \1 \bigr)
^{-1}G\Bigr\|_C \\
\leq &\, \Bigl\|\bigl(L_1^{(\eps)} - L_1^{V}\bigr)\bigl(
L_0^{\eps}-z \1 \bigr) ^{-1}G \Bigr\|_C+\Bigl\|L_1^{V}\Bigl(\bigl(
L_0^{(\eps)}-z \1 \bigr) ^{-1}-\bigl( L_0^{V}-z \1 \bigr)
^{-1}\Bigr)G \Bigr\|_C\\\leq&\,
\int_{\Ga_0}\frac{|G(\xi)|}{z+D^{(\eps)}(\xi)}\sum_{x\in\xi}\int_{\Ga_0}
\Bigl| \eps^{-|\eta|} K_0^{-1}d_\eps(x,\cdot\cup\xi\setminus
x)(\eta)-D_{x}^{V}(\eta)\Bigr|C^{|\eta|}d\la(\eta)C^{|\xi|}d\la(\xi)\\&
+\frac{1}{C}\int_{\Ga_0}\frac{|G(\xi)|}{z+D^{(\eps)}(\xi)}\sum_{x\in\xi}
\int_{\Ga_0}\Bigl| \eps^{-|\eta|}
K_0^{-1}b_\eps(x,\cdot\cup\xi\setminus
x)(\eta)-B_{x}^{V}(\eta)\Bigr|C^{|\eta|}d\la(\eta)C^{|\xi|}d\la(\xi)\\&+
\int_{\Ga_0}|G(\xi)|\frac{\bigl|D^{(\eps)}(\xi)-D^V(\xi)\bigr|}
{\bigl(z+D^V(\xi)\bigr)\bigl(z+D^{(\eps)}(\xi)\bigr)}\sum_{x\in\xi}
\int_{\Ga_0}\Bigl(D_{x}^{V}(\eta)
+\frac{1}{C}B_{x}^{V}(\eta)\Bigr)C^{|\eta|}d\la(\eta)C^{|\xi|}d\la(\xi).
\end{align*}
Repeating all estimates done for the first alternative of
\eqref{either} we get the desired result.
\end{proof}

\begin{remark}
Note that in all examples considered in \cite{FKK2010a} the function
$d_\eps(x,\xi)$ is monotone in $\eps$. Taking into account
\eqref{lim_empty}, condition \eqref{either} becomes natural.
\end{remark}

\setcounter{example}{0}
\begin{example}[revisited]
Let us consider for $\eps\in[0;1]$, $s\in[0;1]$
\[
d_\eps(x,\ga)=\exp\Bigl\{\eps s\sum_{y\in\ga\setminus
x}\phi(x-y)\Bigr\}, \quad
b_\eps(x,\ga)=z\exp\Bigl\{\eps(s-1)\sum_{y\in\ga}\phi(x-y)\Bigr\},
\]
Analogously to the previous computations,
\begin{align*}
\int_{\Ga _{0}}\left\vert K_{0}^{-1}d_\eps\left( x,\cdot \cup \xi
\right) \right\vert \left( \eta \right) \eps^{-|\eta|}C^{\left\vert
\eta \right\vert
}d\la \left( \eta \right)&=d_\eps(x,\xi)e^{C\eps^{-1}\beta_{\eps s}}\\
\int_{\Ga _{0}}\left\vert K_{0}^{-1}b_\eps\left( x,\cdot \cup \xi
\right) \right\vert \left( \eta \right) \eps^{-|\eta|}C^{\left\vert
\eta \right\vert }d\la \left( \eta
\right)&=b_\eps(x,\xi)e^{C\eps^{-1}\beta_{\eps (s-1)}}\\&\leq
zd_\eps(x,\xi)e^{C\eps^{-1}\beta_{\eps (s-1)}},
\end{align*}
since $\phi\geq0$. Let $s\in(0;1]$. Suppose that $\tilde{\beta}:=
\int_\X \phi(x) e^{\phi(x)} dx<\infty$. Then for $\tau\in[-1;1]$,
$\eps\in[0,1]$
\[
\eps^{-1}\beta_{\eps\tau}\leq\eps^{-1}\int_\X \eps
|\tau|\phi(x)\sup_{\tau\in[-1,1]} e^{\eps \tau\phi(x)}
dx\leq\tilde{\beta}.
\]
The bound \eqref{a_small_1.5} will be proved once we show
$e^{C\tilde{\beta}}\bigl(1+\frac{z}{C}\bigr)<\frac{3}{2}$. If $s=0$
then, similarly, we need $\beta:=\int_\X\phi(x)dx<\infty$ and
$\frac{z}{C}e^{C\beta}<\frac{1}{2}$. Note also that the conditions
$\beta<\infty$ and $\bar{\phi}=\sup_\X\phi(x)<\infty$ yield
$\tilde{\beta}\leq e^{\bar{\phi}}\beta<\infty$. For the case $s=0$
condition \eqref{D-bdd-eps} holds automatically. If $s\in(0;1]$ one
should assume $\bar{\phi}<\infty$ then $\nu=e^{s\bar{\phi}}$
(uniformly by $\eps\in(0;1]$). Then to guarantee
\eqref{small_nu_1.5} we need
$e^{C\tilde{\beta}}\bigl(1+\frac{z}{C}e^{s\bar{\phi}}\bigr)<\frac{3}{2}$.
Therefore, under such conditions we obtain statement of
Proposition~\ref{prop_exist_ren}. Next,
\begin{align}
 \lim_{\eps\rightarrow0} \eps^{-|\eta|}\bigl( K_{0}^{-1}d_\eps \left( x,\cdot \cup \xi \right)
\bigr) \left( \eta \right) &= \lim_{\eps\rightarrow0} \exp\Bigl\{\eps s\sum_{y\in\xi}\phi(x-y)\Bigr\}e_\la\left(\frac{e^{\eps s \phi(x-\cdot)}-1}{\eps},\eta\right)\notag\\
&= e_\la\bigl(s \phi(x-\cdot),\eta\bigr)=:D_x^V(\eta);\label{bbb1}
\end{align}
and, analogously,
\begin{equation}
 \lim_{\eps\rightarrow0} \eps^{-|\eta|}\bigl( K_{0}^{-1}b_\eps \left( x,\cdot \cup \xi \right)
\bigr) \left( \eta \right) = ze_\la\bigl((s-1)
\phi(x-\cdot),\eta\bigr)=: B_x^V(\eta). \label{bbb2}
\end{equation}
Since $D_x^V(\emptyset)=1\leq d_\eps(x,\eta)$, the second
alternative of \eqref{either} is satisfied. In order to use
Proposition~\ref{prop_exist_V} and Theorem~\ref{conv_qo} we need to
verify the convergences \eqref{bbb1} and \eqref{bbb2} in
$\mathcal{L}_C$ (recall that this implies \eqref{est-d-V} and
\eqref{est-b-V}, see the proof of Theorem~\ref{conv_qo}). To do this
let us note that for any $\tau\in[-1;1]$
\begin{align*}
&\biggl|\exp\Bigl\{\eps \tau\sum_{y\in\xi}\phi(x-y)\Bigr\}e_\la\left(\frac{e^{\eps \tau \phi(x-\cdot)}-1}{\eps},\eta\right)
- e_\la\bigl(\tau \phi(x-\cdot),\eta\bigr)\biggr|\\\leq&\max\biggl\{
\exp\Bigl\{ \tau\sum_{y\in\xi}\phi(x-y)\Bigr\},1\biggr\}e_\la\left(\frac{\bigl|e^{\eps \tau \phi(x-\cdot)}-1\bigr|}{\eps},\eta\right)+e_\la\bigl(|\tau| \phi(x-\cdot),\eta\bigr)\\\leq&\left(\max\biggl\{
\exp\Bigl\{ \tau\sum_{y\in\xi}\phi(x-y)\Bigr\},1\biggr\}+1\right)e_\la\bigl( \phi(x-\cdot),\eta\bigr),
\end{align*}
and the last function of $\eta$ belongs to $\mathcal{L}_C$ for all
$\xi\in\Ga_0$ and a.a. $x\in\X$ provided $\phi\in L^1(\X)$. By
\eqref{intexp}, the Vlasov equation \eqref{Vlasov_eqn} now has the
following form
\[
\dfrac{\partial}{\partial t} \rho_t(x)=-\rho_t(x)\exp\bigl\{ s(\rho_t\ast
\phi)(x)\bigr\} +z\exp\bigl\{ (s-1)(\rho_t\ast
\phi)(x)\bigr\}.
\]
Here and below $\ast$ means usual convolution of functions in $\X$.
\end{example}

\begin{example}[revisited]
Let $d_\eps(x,\ga\setminus x)=m +
\eps\varkappa^-\sum_{y\in\ga\setminus x}a^-(x-y)$,
$b_\eps(x,\ga)=\eps\varkappa^+\sum_{y\in\ga}a^+(x-y)$. Comparing
with the previous notations we have changed $\varkappa^\pm$ onto
$\eps\varkappa^\pm$. Clearly, conditions \eqref{smallparBDLP-1},
\eqref{smallparBDLP-2} implies the same inequalities for
$\eps\varkappa^\pm$. Note also that $d_\eps$ is decreasing in
$\eps\rightarrow0$. Therefore, to apply all results of this section
to BDLP-model we should prove the convergence \eqref{d-lim},
\eqref{b-lim} in $\mathcal{L}_C$. Note, that
\begin{align*}
\eps^{-|\eta|}K_{0}^{-1}d_\eps\left( x,\cdot \cup \xi \right) \left(
\eta \right)&=d_\eps(x,\xi)\eps^{-|\eta|}0^{|\eta|}+
\eps\eps^{-|\eta|}\varkappa^-\1_{\Ga^{(1)}}(\eta)\sum_{y\in\eta}a^{-}(x-y)\\
&=d_\eps(x,\xi)0^{|\eta|}+\1_{\Ga^{(1)}}(\eta)\sum_{y\in\eta}a^{-}(x-y)
\\&\rightarrow
m0^{|\eta|}+\1_{\Ga^{(1)}}(\eta)\sum_{y\in\eta}a^{-}(x-y)=:D_x^V(\eta)
\end{align*}
and, analogously,
\begin{align*}
\eps^{-|\eta|}K_{0}^{-1}b_\eps\left( x,\cdot \cup \xi \right)
 \left( \eta \right)&=b_\eps(x,\xi)0^{|\eta|}+
\1_{\Ga^{(1)}}(\eta)\sum_{y\in\eta}a^{+}(x-y)\\&\rightarrow
\1_{\Ga^{(1)}}(\eta)\sum_{y\in\eta}a^{+}(x-y)=:B_x^V(\eta).
\end{align*}
The convergence in $\mathcal{L}_C$ is obvious now. The Vlasov
equation has the following form
\[
\dfrac{\partial}{\partial t} \rho_t(x) = \varkappa^{+}(a^{+}\ast\rho_t)(x)- \varkappa^{-}\rho_{t}(x)(a^{-}\ast \rho_{t})(x)-
m\rho_{t}(x).
\]
The existence and uniqueness of the solution to this equation was
studied in \cite{FKK2010c}.
\end{example}

\begin{remark}
By duality \eqref{duality}, Theorem~\ref{conv_qo} yields
weak*-convergence of the semigroups $\hat{U}^{\odot \alpha}_\eps(t)$
to $\hat{U}^{\odot \alpha}_V(t)$ in $\overline{\K_{\alpha C}}$. To
prove such convergence in the strong sense we need additional
analysis of their generators. The problem concerns the fact that we
have explicit expression for the generator $\hat{L}^{\odot
\alpha}_V=\hat{L}_{V}^\ast$ only on the core $\bigl\{k\in
{\K_{\alpha C}} \bigm|\hat{L}_V^\ast k \in \overline{\K_{\alpha C}}
\bigr\}$. However, we are able to show such convergence for the
Glauber dynamics described in Example 1 for $s=0$ using modified
technique (see \cite{FKK2010b}).
\end{remark}

\section*{Acknowledgements}
The financial support of DFG through the
SFB 701 (Bielefeld University) and German-Ukrainian Project 436 UKR
113/97 is gratefully acknowledged.

\end{document}